\newtheorem*{mthm}{Main Theorem}
\newcommand{\Char}{\text{Char}}
\newcommand{\s}{\mathfrak{s}}
\newcommand{\comment}[1]{}
\newcommand{\abs}[1]{\left\vert #1 \right\vert}
\newcommand{\half}{{\tfrac{1}{2}}}
\newcommand{\Spinc}{\text{Spin}^c}
\newcommand{\Spin}{\text{Spin}^c}
\newcommand{\Charm}{\text{Char}^*}
\DeclareMathOperator{\rank}{rank}
\DeclareMathOperator{\PD}{PD}
\DeclareMathOperator{\sgn}{sgn}
\DeclareMathOperator{\coker}{coker}
\DeclareMathOperator{\Hom}{Hom}
\DeclareMathOperator{\id}{id}
\theoremstyle{plain}
\newtheorem{thm}{Theorem}[section]
\newtheorem{lem}[thm]{Lemma}
\newtheorem{prop}[thm]{Proposition}
\newtheorem{cor}[thm]{Corollary}
\newtheorem{qn}[thm]{Question}
\newtheorem{conj}[thm]{Conjecture}
\title{Pretzel Knots with Unknotting Number One}
\author{Dorothy Buck, Julian Gibbons, Eric Staron}
\begin{document}

\begin{abstract}
We provide a partial classification of the 3-strand pretzel knots $K = P(p,q,r)$ with unknotting number one. Following the classification by Kobayashi and Scharlemann-Thompson for all parameters odd, we treat the remaining families with $r$ even. We discover that there are only four possible subfamilies which may satisfy $u(K) = 1$. These families are determined by the sum $p+q$ and their signature, and we resolve the problem in two of these cases. Ingredients in our proofs include Donaldson's diagonalisation theorem (as applied by Greene), Nakanishi's unknotting bounds from the Alexander module, and the correction terms introduced by Ozsv\'ath and Szab\'o. Based on our results and the fact that the 2-bridge knots with unknotting number one are already classified, we conjecture that the only 3-strand pretzel knots $P(p,q,r)$ with unknotting number one that are not 2-bridge knots are $P(3,-3,2)$ and its reflection.

\end{abstract}

\maketitle

\section{Introduction}

The unknotting number $u(K)$ is the minimal number of times a knot $K$ must be passed through itself in order to unknot it, an invariant that is at once easy to define yet at the same time almost always extremely difficult to compute. Indeed, it took many years to calculate $u(K)$ for the majority of knots with ten or fewer crossings, and while exhibiting an upper bound is straightforward (by performing an unknotting), lower bounds are more elusive: it is generally not known which knot diagrams will realise the actual unknotting number (see \cite{Bleiler}, \cite{nakanishi2}, and \cite{Stoimenow}).

One classical lower bound for the unknotting number is the knot signature, $\sigma(K)$, which satisfies $\abs{\sigma(K)} \leq 2u(K)$ (see \cite{Murasugi}). For example, if $u(K) = 1$, it follows that $\abs{\sigma(K)} = 0,2$. This condition is often the first port of call when investigating unknotting number. As one might expect, however, it is rarely sufficient $-$ infinite families of knots with the same signature but wildly different unknotting numbers are known to exist. It is only in certain cases, for example when $K$ is a torus knot, that the bound is tight (\cite{Mrowka} and \cite{Rasmussen}).

Specific to the case of unknotting number one, there are a number of other topological obstructions, many concerning the double branched cover $\Sigma(K)$. The most important of these for this paper is the Montesinos theorem: if $u(K) = 1$, then $\Sigma(K)$ arises as half-integral surgery on some knot $\kappa \subset S^3$. That is, $\Sigma(K) = S^3_{\pm D/2}(\kappa)$ (see \cite{montesinos}). This has various implications: cyclic $H_1(\Sigma(K))$, restrictions on the 4-manifolds with $\Sigma(K)$ as boundary, and symmetries in the correction terms of $\Sigma(K)$ (see \cite{OSAbsolute} and \cite{OSUnknot}).

Following both these leads, our main result in the present work is a partial classification of the 3-strand pretzels $K = P(p,q,r)$ with unknotting number one. Such knots are unchanged by permutations of their parameters, and have reflections given by
$$\overline{P(p,q,r)} = P(-p,-q,-r).$$
For $K$ to be a \emph{bona fide} knot, we require either that all three parameters be odd, or that exactly one of them be even (say $r = 2m$). The first of these cases (all odd) has been studied independently by Kobayashi \cite{Kobayashi} and Scharlemann and Thompson \cite{Scharlemann}, who give the criterion that
$$u(K)=1 \iff \pm\{1,1\}\text{ or }\pm\{3,-1\} \subset \{p,q,r\},$$
and thus our work concentrates on the case $P(p,q,2m)$. As a consequence of fact that $u(\overline{K}) = u(K)$, we assume that $2m$ is non-negative, and, having dealt with the case $m = 0$ early on, thereafter restrict our attention to $m > 0$.

As a final piece of set-up, recall that Kanenobu and Murakami \cite{Kanenobu} and Torisu \cite{Torisu} have given a complete description of the 2-bridge knots with unknotting number one. Since the double branched cover of a pretzel knot is Seifert fibred over $S^2$, it follows that $P(p,q,r)$ is not a 2-bridge knot if and only if all three of $p,q,r \neq \pm 1$ (or else the double branched cover would have fewer than three exceptional fibres and therefore be a lens space). As $r$ is even, $r \neq \pm 1$, and so our primary interest will be when $p,q \neq \pm 1$.

\subsection{Main Results}

Our first result, determined by way of knot signatures, says that there are only four families of 3-strand pretzel knots (excluding 2-bridge knots), $r$ even, which stand a chance of satisfying $u(K) = 1$. Having identified these families according to their values $p+q$, our main theorem is then the following.

\begin{mthm} Suppose that $K=P(p,q,2m)$, $m \neq 0$, is a pretzel knot with unknotting number one. Then, up to reflection, $p+q = 0,\pm2,4$ and $m > 0$. Moreover:
\begin{enumerate}
\item If $p+q=-2$, then $K = P(1,-3,2m), P(-1,-1,2m)$ (all 2-bridge);
\item If $p+q=0$, then $K = P(3,-3,2)$ (which is not 2-bridge).
\end{enumerate}
\end{mthm}

The table below indicates which pretzels in each family have unknotting number one, together with our conjectures. We present it as a more digestible version of the theorem's conclusions.

\begin{center}
\begin{tabular}{c||c|c}
Family &Knots must be... & Conjecture\\
\hline
\hline
$p+q=-2$ &$P(1,-3,2m),P(-1,-1,2m)$ &$-$\\
$p+q=0$ &$P(3,-3,2)$ & $-$\\
\hline
$p+q=2$ & unknown &$P(3,-1,2m),P(1,1,2m)$\\ 
$p+q=4$ & unknown &$P(3,1,2),P(5,-1,4),P(5,-1,2)$
\end{tabular}
\end{center}

Most of these are in fact 2-bridge as at least one parameter is $\pm1$. Hence, we have the following conjecture:

\begin{conj}
The only 3-strand pretzel knots $P(p,q,r)$ with unknotting number one that are not 2-bridge knots are $P(3,-3,2)$ and its reflection.
\end{conj}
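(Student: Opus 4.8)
\section*{Proof proposal for the Conjecture}

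The plan is to combine three reductions and then attack the residual families with the lattice-theoretic and Floer-theoretic obstructions already in play. First I would dispose of the all-odd case using the Kobayashi--Scharlemann--Thompson criterion quoted above: an all-odd pretzel $P(p,q,r)$ has $u(K)=1$ if and only if $\pm\{1,1\}\subset\{p,q,r\}$ or $\pm\{3,-1\}\subset\{p,q,r\}$. Each alternative forces at least one parameter to equal $\pm1$, so such a $K$ is $2$-bridge. Hence no non-$2$-bridge all-odd pretzel has unknotting number one, and the problem collapses to the even case $P(p,q,2m)$.

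Next I would invoke the Main Theorem: any even $K$ with $u(K)=1$ satisfies $p+q\in\{0,\pm2,4\}$ up to reflection, with $m>0$, and the cases $p+q=0$ and $p+q=-2$ are already resolved there $-$ the former producing exactly $P(3,-3,2)$ (non-$2$-bridge) and the latter only the $2$-bridge knots $P(1,-3,2m),P(-1,-1,2m)$. The entire Conjecture therefore reduces to a single statement: for $p+q\in\{2,4\}$ with $p,q\neq\pm1$ and $m>0$, the knot $P(p,q,2m)$ has $u(K)\neq1$. These are two genuinely infinite, non-$2$-bridge families (e.g. $P(5,-3,2m)$ and $P(7,-3,2m)$), so the heart of the work is an obstruction that kills all of them uniformly.

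To rule out these families I would run the Montesinos trick together with Greene's changemaker obstruction. If $u(K)=1$ then $\Sigma(K)=S^3_{\pm D/2}(\kappa)$ with $D=\det K=\abs{pq+2m(p+q)}$, and the two-handle cobordism realising the crossing change yields (after possibly reflecting $K$) a negative-definite filling of $\Sigma(K)$. By Donaldson's diagonalisation theorem as applied by Greene, the associated Goeritz lattice of $P(p,q,2m)$, together with the standard surgery correction, must embed into the diagonal lattice $\mathbb{Z}^{n}$ as a \emph{changemaker} lattice. The Goeritz form of a $3$-strand pretzel is explicit, so I would impose the changemaker embedding condition and show the resulting Diophantine constraints admit only the $2$-bridge solutions $P(3,-1,2m),P(1,1,2m)$ when $p+q=2$ and $P(3,1,2),P(5,-1,4),P(5,-1,2)$ when $p+q=4$, each of which carries a $\pm1$ parameter. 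Whatever finite ambiguity the changemaker analysis leaves I would eliminate using the symmetry of the Ozsv\'ath--Szab\'o correction terms of a half-integral surgery $-$ computed on the $\Sigma(K)$ side from its Seifert fibred plumbing $-$ and, where the $d$-invariants are inconclusive, Nakanishi's bound on the minimal number of Alexander-module generators.

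The hard part is the $p+q=4$ family. Here the determinant $\abs{pq+8m}$ can be kept small relative to the parameters, and the two free variables $q$ and $m$ give a genuinely two-parameter family at fixed signature, so the signature bound is vacuous and the correction-term symmetry alone does not terminate. The changemaker condition must be pushed to its combinatorial limit to bound $q$ and $m$ \emph{simultaneously}, and I expect a residual finite set of candidates that requires direct computation of the $d$-invariants of the Seifert fibred space $\Sigma(K)$ to clear. Controlling this two-parameter growth uniformly, rather than case by case, is the principal obstacle; it is exactly the step that separates a proof of the Conjecture from the partial results of the Main Theorem.
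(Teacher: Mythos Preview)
The statement you are attempting is a \emph{conjecture} in the paper, not a theorem; the paper offers no proof and explicitly explains in its concluding section why the very tools you propose break down on the residual families $p+q\in\{2,4\}$. Your reduction to these two families via Kobayashi--Scharlemann--Thompson and the Main Theorem is correct, but the attack you outline thereafter is precisely the approach the authors report as failing.

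Concretely, there are three gaps. First, Greene's obstruction (Theorem~\ref{greene} in the paper) requires $\Sigma(K)$ to be an $L$-space and $X_K$ to be sharp; the paper states flatly that for $P(k,-k+2,2m)$ and $P(k,-k+4,2m)$ the double branched cover is \emph{not} an $L$-space, so the changemaker embedding criterion is unavailable, not merely hard to push through. Second, your fallback to the correction-term symmetry of Theorem~\ref{OSunknot} cannot succeed uniformly: the paper observes that within each of these families there are infinite subfamilies with $\det K\in\{1,3,5,11\}$, and when $\det K=1$ or $3$ the symmetry condition \eqref{obs} is vacuous because there are too few $\Spinc$-structures to exhibit any asymmetry. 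Third, the Alexander-module bound you invoke as a last resort is one the authors explicitly tried and found ineffective in these cases. Your final paragraph correctly senses that the $p+q=4$ family is the crux, but the obstacle is not merely combinatorial growth in the changemaker analysis: the hypotheses needed to even run that analysis are absent. A genuine proof of the Conjecture will require an obstruction outside the Donaldson/Greene/$d$-invariant/Alexander toolkit.
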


The pretzel referred to in this conjecture is the following:

\begin{figure}[h]
\begin{center}
\includegraphics[trim = 4in 5in 4in 4.5in, scale = .4]{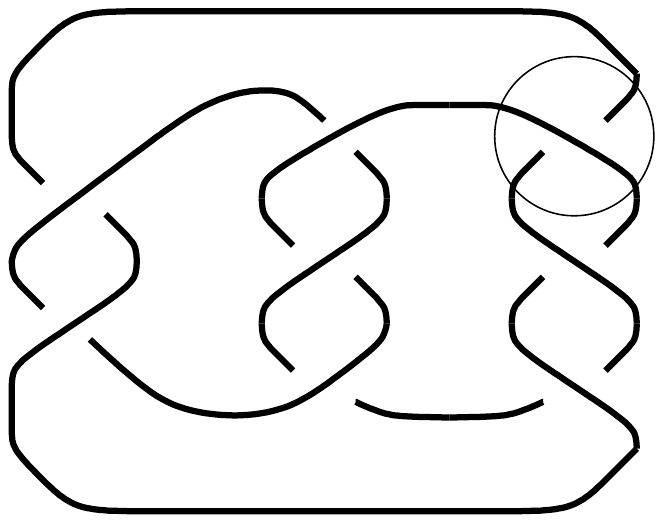}
\end{center}
\end{figure} 
\noindent and the circle indicates the unknotting crossing.

\subsection{Motivation}

This work was motivated by the following question: Which algebraic knots, in the sense of Conway, satisfy $u(K) = 1$? A complete treatment of algebraic knots can be found in \cite{gordon} and \cite{thistlethwaite}, but in brief, the distinct types are 2-bridge, large algebraic, and Montesinos length three, with the characterisation being split according to the topology of their double covers. To wit, we have the following division.\\

\begin{center}
\begin{tabular}{c||c|c|c}
$K$ &2-bridge &large algebraic &Montesinos length three\\
\hline
\hline
$\Sigma(K)$ &lens space &graph manifold &atoroidal Seifert fibred \\
 & &(toroidal) &($S^2$ with 3 exceptional fibres)\\
\end{tabular}\\
\end{center}

As stated previously, Kanenobu and Murakami have solved the problem for 2-bridge knots in \cite{Kanenobu}, and this solution was later generalised using Gordian distance by Torisu \cite{Torisu}. The large algebraic case is dealt with by Gordon and Luecke \cite{gordon} in terms of the constituent algebraic tangles of $K$. However, because the double branched cover of a Montesinos knot of length three is neither a lens space nor toroidal, neither of these results apply. It is then natural to ask the following question.

\begin{qn} \label{qn1} Which Montesinos knots of length three have unknotting number one? \end{qn}

In \cite{Torisu} Torisu makes the following conjecture. He proves the theorem immediately afterwards as evidence for his claim.

\begin{conj} [Torisu] \label{TorisuConj}
Let $K$ be a Montesinos knot of length three. Then $u(K)=1$ if and only if $K=\mathcal{M}(0;(p,r),(q,s),(2mn\pm1,2n^2))$, where $p$, $q$, $r$, $s$, $m$, and $n$ are non-zero integers, $m$ and $n$ are coprime, and $ps+rq=1$.
\end{conj}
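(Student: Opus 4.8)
The plan is to establish the two implications separately, as the ``if and only if'' demands. For the reverse (constructive) implication, suppose $K = \mathcal{M}(0;(p,r),(q,s),(2mn\pm1,2n^2))$ with $ps+rq=1$ and $\gcd(m,n)=1$. The arithmetic condition $ps+rq=1$ is the crux: the first two rational tangles have slopes $r/p$ and $s/q$, whose sum is $(rq+ps)/(pq)=1/(pq)$, so after fusing them the knot is carried by just two rational tangles, of slopes $1/(pq)$ and $2n^2/(2mn\pm1)$. I would then exhibit a single crossing change inside the third twist region, realised as a $\pm1$ tangle insertion that shifts the slope $2n^2/(2mn\pm1)$ by a controlled amount, and verify directly that the resulting two-tangle diagram is a $2$-bridge knot whose continued fraction collapses to the unknot. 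This is precisely the content of the theorem Torisu proves immediately after stating the conjecture, so the ``if'' half reduces to reproducing that crossing change and checking the continued-fraction bookkeeping.

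The forward implication is the substantive one. Assume $u(K)=1$ for a length-three Montesinos knot $K$. By the Montesinos theorem $\Sigma(K)=S^3_{\pm D/2}(\kappa)$ for some $\kappa\subset S^3$ and odd $D=\det(K)$, while on the other hand $\Sigma(K)$ is an atoroidal Seifert fibred space over $S^2$ with exactly three exceptional fibres, whose Seifert data are read off from the three tangle slopes. The strategy is to combine three obstructions to pin these data down. First, the half-integral filling exhibits $\Sigma(K)$ as the boundary of a sharp negative-definite $4$-manifold, and Donaldson's diagonalisation theorem in Greene's changemaker formulation forces the associated lattice to embed into a diagonal lattice as a changemaker lattice; transcribing this embedding for a three-fibre Seifert space should constrain the denominators $\alpha_i$ and numerators $\beta_i$ severely. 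Second, the symmetry of the \os correction terms forced by the half-integral surgery description further restricts the $\spinc$ structures, and hence the Seifert data. Third, Nakanishi's bound from the Alexander module eliminates any stray candidate whose module cannot be killed by a single generator. Running these three filters together, the goal is to show that the only surviving data are $(0;r/p,s/q,2n^2/(2mn\pm1))$ with $ps+rq=1$ and $\gcd(m,n)=1$ — in particular forcing one denominator to be twice a square and recovering the coprimality and $ps+rq=1$ relations as the integrality conditions on the changemaker embedding.

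The main obstacle is the changemaker analysis in the forward direction. For the pretzel specialisation $\alpha_1=\alpha_2=\alpha_3=1$ treated in this paper the relevant lattice is comparatively rigid, but for general Seifert data the three-fibre intersection form is a plumbing on a star-shaped tree with three legs, and classifying the changemaker embeddings of such forms is far less automatic than in the cyclic (lens space) case that Greene resolved. One must handle arbitrarily long legs, control the correction-term contribution fibre by fibre, and exclude sporadic embeddings that pass both the lattice and the correction-term tests yet are realised by no actual half-integral surgery. It is exactly this gap — passing from the pretzel case, where the legs are trivial, to the full three-legged plumbing — that keeps the statement a conjecture rather than a theorem, and I would regard a complete forward proof as contingent on a changemaker classification for three-legged star-shaped lattices that is not presently available.
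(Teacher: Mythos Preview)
The statement you are attempting to prove is a \emph{conjecture}, and the paper does not prove it. There is therefore no proof in the paper against which to compare your proposal. The paper's contribution is a partial verification of Torisu's conjecture in the special case of $3$-strand pretzel knots $P(p,q,2m)$, and even there only for the subfamilies $p+q=-2$ and $p+q=0$; the cases $p+q=2,4$ remain open.

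You correctly identify that the ``if'' direction is already Torisu's theorem, and your closing paragraph is honest about the fact that your forward-direction strategy does not close the gap: the changemaker classification for general three-legged star-shaped plumbings is not available, and you say so. So your proposal is not a proof but a plausible research outline, and you know it. One remark worth adding: the paper points out an alternative route you do not mention, namely that the Seifert fibering conjecture (Conjecture~\ref{conj:SF}) would imply Torisu's conjecture directly, since it would force the knot $\kappa$ in the Montesinos half-integral surgery description $\Sigma(K)=S^3_{\pm D/2}(\kappa)$ to be a torus knot or a cable thereof, and Dehn surgery on such knots is understood via Moser's work. That reduction is also conditional, but it is a different conditional than the one you identify.
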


\begin{thm} [Torisu]
Let $K$ be a Montesinos knot of length three and suppose the unknotting operation is realised in a standard diagram.  Then $u(K)=1$ if and only if it has the form in Conjecture \ref{TorisuConj}. 
\end{thm}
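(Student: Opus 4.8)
The plan is to analyze a standard diagram of a Montesinos knot of length three $K = \mathcal{M}(e; (\alpha_1,\beta_1),(\alpha_2,\beta_2),(\alpha_3,\beta_3))$ and show that if the unknotting crossing change $c$ can be isotoped into such a diagram, then $K$ must have the claimed form. The key geometric input is the Montesinos theorem: $u(K)=1$ forces $\Sigma(K) = S^3_{\pm D/2}(\kappa)$ for some knot $\kappa \subset S^3$ and odd $D$. For a Montesinos knot, $\Sigma(K)$ is a Seifert fibred space over $S^2$ with (at most) three exceptional fibres, and the crossing change $c$ lifts to a surgery curve in $\Sigma(K)$. First I would set up the correspondence between crossing changes in the standard diagram and the rational tangle replacements in the constituent tangles: changing $c$ either alters the integer framing $e$ or modifies one of the rational parameters $\beta_i/\alpha_i$. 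Since changing $e$ alone cannot unknot a genuine length-three Montesinos knot (the double cover would remain Seifert fibred with three exceptional fibres, hence never $S^3$), the crossing change must live inside one of the three tangles; say it changes $(\alpha_3,\beta_3)$ to a tangle whose replacement yields a 2-bridge knot or the unknot.

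Next I would translate "$K$ with $c$ changed is the unknot" into the statement that performing the dual rational tangle replacement on the unknot produces $K$; equivalently, $\Sigma(K)$ is obtained from $S^3$ by a single rational surgery on the lift $\widetilde{c}$ of the unknotting arc, which is a knot in a genus-one Heegaard-type picture coming from the Seifert structure. The arithmetic heart of the argument is then to identify which Seifert invariants are compatible with $\Sigma(K)$ being half-integral surgery on a knot in $S^3$: writing out $H_1(\Sigma(K))$ and the linking form, the half-integrality $\Sigma(K) = S^3_{\pm D/2}(\kappa)$ forces the order of $H_1$ to be $2n^2$ for some $n$, and forces the Seifert data to satisfy a coprimality/Diophantine constraint. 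Unwinding this in terms of $\alpha_i, \beta_i$ and the tangle that was changed, I expect to recover precisely $(\alpha_3, \beta_3) = (2n^2, 2mn \pm 1)$ with $\gcd(m,n)=1$, together with the condition $ps + rq = 1$ (which encodes that the first two tangles, after the unknotting move, assemble into an unknotted/2-bridge-compatible configuration — i.e. a determinant-one relation between the continued fractions $p/r$ and $q/s$).

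The main obstacle, I expect, is the bookkeeping in the "only if" direction: carefully showing that \emph{every} crossing change realizable in a standard diagram falls into the tangle-replacement framework above, and that no exotic crossing change (e.g. one mixing two tangle regions, or one that a priori changes the Seifert genus in an unexpected way) can do the job. This requires a clean normal form for standard Montesinos diagrams and a verification that an unknotting crossing, being a single full twist on two antiparallel strands, is supported in a ball meeting the diagram in a controlled way — essentially an application of the classification of rational tangles and the behaviour of the double branched cover under tangle surgery. The converse ("if") direction should be comparatively routine: one exhibits the explicit crossing in the diagram of $\mathcal{M}(0;(p,r),(q,s),(2mn\pm1,2n^2))$ and checks by a continued-fraction computation that changing it unknots $K$, using $ps+rq=1$ and $\gcd(m,n)=1$ to collapse the resulting tangle sum to the trivial knot.
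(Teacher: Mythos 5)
The paper does not actually prove this statement: it is quoted from \cite{Torisu} as background, so the only fair comparison is with Torisu's own argument (whose mechanism the paper sketches when explaining why Conjecture \ref{conj:SF} implies Conjecture \ref{TorisuConj}). Your first step is right and matches Torisu: in a standard diagram the unknotting crossing either sits in the $e$-twist region (excluded, since the result would still be a length-three Montesinos knot, whose double cover has three exceptional fibres and so is never $S^3$) or inside one of the three rational tangles, so the crossing change is a rational tangle replacement and lifts to a Dehn surgery, half-integral by the Montesinos trick.

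The genuine gap is in what you call the ``arithmetic heart.'' Computing $H_1(\Sigma(K))$ and the linking form cannot force the Seifert data into the form $(2mn\pm 1,\,2n^2)$: note first that $\abs{H_1(\Sigma(K))} = \det K$ is odd, so it is never $2n^2$; more importantly, half-integral surgeries on many different knots $\kappa$ share the same $H_1$ and linking form, which is exactly why the unrestricted Conjecture \ref{TorisuConj} is still open and why the present paper has to invoke Donaldson's theorem and correction terms even for pretzels. What the standard-diagram hypothesis really buys --- and what your sketch never uses --- is an identification of the surgery knot $\kappa$ itself. After the crossing change the diagram shows the unknot, so its double cover $S^3$ decomposes as the Seifert fibred piece over the disc with the two untouched exceptional tangles $(p,r),(q,s)$ glued to the solid torus covering the modified tangle ball; the condition that this union be $S^3$ is where $ps+rq=1$ comes from, and the lift $\kappa$ of the unknotting circle is the core of that solid torus, hence a fibre of a Seifert fibration of $S^3$, hence a torus knot or unknot. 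Only then does the numerology follow, by applying Moser's classification \cite{Moser} of Seifert fibred surgeries on torus knots to the half-integral slope and matching the resulting exceptional-fibre data with the Montesinos data of $K$, which yields the third tangle $(2mn\pm 1, 2n^2)$ with $(m,n)$ coprime. Without this identification of $\kappa$ (the analogue, in the two-bridge case, of the cyclic-surgery/torus-knot step behind \cite{Kanenobu}), the homological bookkeeping you propose cannot reach the conclusion. Your converse direction, exhibiting the explicit crossing and collapsing the diagram using $ps+rq=1$, is fine.
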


A proof of the following conjecture (see Conjecture 4.8 of \cite{GordonProblems}) would also prove Conjecture \ref{TorisuConj}.

\begin{conj}[Seifert fibering conjecture]\label{conj:SF} For a knot in $S^3$ which is neither a torus knot nor a cable of a torus knot, only integral surgery slopes can yield a Seifert fibred space.
\end{conj}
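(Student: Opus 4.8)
Since this is a well-known open problem, what follows is a research programme with the genuine obstruction flagged, not a complete argument. The plan is to split along the geometrisation trichotomy: a nontrivial knot in $S^3$ is a torus knot, a satellite knot, or hyperbolic, and torus knots together with their cables (the iterated torus knots) are excluded by hypothesis, so two cases remain.

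\emph{Satellite knots.} Here I would appeal to the existing analysis of Seifert fibred Dehn surgery on satellites (Gordon, Gabai, Miyazaki--Motegi). If $K$ has a companion torus $T$ and $S^3_{p/q}(K)$ is Seifert fibred, then $T$ must be isotopic to a vertical torus of the resulting Seifert structure; following this back through the JSJ pieces of $S^3\setminus N(K)$ forces each companion solid torus to carry a Seifert fibration, so that $K$ is an iterated torus knot. Since the hypothesis excludes exactly those, the satellite case should follow from known results.

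\emph{Hyperbolic knots.} This is the whole difficulty. A non-integral slope $p/q$ with $q\ge 2$ for which $S^3_{p/q}(K)$ is Seifert fibred is an exceptional slope on the one-cusped hyperbolic manifold $S^3\setminus N(K)$, while $S^3$ is the trivial $1/0$ filling and has cyclic fundamental group. I would feed this into the known bounds on distances between exceptional slopes --- in particular the Boyer--Zhang estimate bounding the distance between a slope with cyclic filling and a slope with small Seifert fibred filling, which here is exactly $q$ --- to force $q$ below a universal constant, and then try to eliminate the residual values $q=2,3,\dots$ using the homological constraints on $S^3_{p/q}(K)$ (its first homology and Casson--Walker invariant are pinned down by the Seifert data), the correction-term symmetries available when $p/q$ is half-integral (the Montesinos-type obstruction used elsewhere in this paper), and, failing that, a SnapPy/Regina census check of small-volume one-cusped hyperbolic manifolds.

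\textbf{The main obstacle.} The unconditional machinery does not deliver integrality. The general exceptional-surgery bounds still permit a non-integral Seifert fibred slope; the sharp cyclic-versus-Seifert distance estimates are clean only when the base orbifold is $S^2$ with at most three cone points of small order; and ruling out $q=2$ --- half-integral Seifert fibred surgery on a hyperbolic knot in $S^3$ --- is itself open and is essentially equivalent to the conjecture. The realistic deliverable is therefore the reduction above together with a bounded-$q$ statement for hyperbolic knots, which already suffices to turn Torisu's conjecture for length-three Montesinos knots into a finite problem.
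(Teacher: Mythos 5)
You have correctly diagnosed the situation: this statement is not proved in the paper at all. It is quoted verbatim from Gordon's problem list (Conjecture 4.8 of \cite{GordonProblems}) and appears only as motivation --- the authors observe that a proof of it would imply Torisu's conjecture for length-three Montesinos knots, and none of the paper's actual results depend on it. So there is no proof of the paper's to compare your proposal against, and your decision to present a research programme rather than claim an argument is the right call. As a survey your programme is broadly consistent with the state of the art: the satellite case is indeed settled in the literature (the line of argument you sketch, via Gabai's work on knots in solid tori, Gordon's results on surgery along satellite knots, and Miyazaki--Motegi, shows a satellite knot with a Seifert fibred surgery must be a cable of a torus knot, which the hypothesis excludes), and the genuinely open case is a hyperbolic knot. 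Your flagged obstacle is also the real one: no unconditional distance bound forces $q\le 1$ for a small Seifert fibred filling against the cyclic (meridional) filling, and the half-integral case $q=2$ --- precisely the case relevant to the Montesinos/unknotting application in this paper --- remains open. One caution: be careful not to overstate what the Boyer--Zhang seminorm estimates give you in the hyperbolic case; they yield clean bounds for cyclic and finite fillings ($q\le 2$), but for general small Seifert fibred fillings the bounds are conditional in exactly the way you concede, so even the ``bounded-$q$'' deliverable you propose is not automatic. In short: your proposal is an honest and accurate account of an open problem, not a proof, and that matches the paper, which likewise offers none.
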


A complete explanation of why Conjecture \ref{conj:SF} implies Conjecture \ref{TorisuConj} can be found in \cite{Torisu}. In short, if a Montesinos knot $K\subset S^3$ has unknotting number one, then $\Sigma(K)$, a Seifert fibred space, equals $S^3_{\pm D/2}(\kappa)$, where $D$ is odd and $\kappa \subset S^3$ is a knot. If the Seifert fibering conjecture is true, then $\kappa$ is either a torus knot or a cable of a torus knot.  In either case, Dehn surgery on these knots is well understood (see Moser \cite{Moser}), and after some numerical calculations the desired result is achieved.

Of interest to us is what Torisu's conjecture predicts about 3-strand pretzel knots with unknotting number one. After a little work, it is not difficult to see that it not only suggests the results proved in this paper, but also implies our conjecture in the $p+q=2$ case. Thus, our work can be seen as a partial proof of Torisu's conjecture.

\subsection{Organisation}

As foreshadowed, we first use the knot signature to separate our knots into four types of candidates for $u(K) = 1$. These are split according to whether $p+q = 0,\pm2, 4$. All four require different approaches.

When $p+q = -2$, we use the Montesinos theorem coupled with a certain plumbing for $\Sigma(K)$ to glue together a closed, oriented, simply connected, smooth, negative-definite 4-manifold, and thence apply Donaldson's diagonalisation theorem. This turns out to be insufficient as an obstruction to unknotting number one, so to make more progress we use a strengthened version of this approach due to Greene. The result, in the case $p+q = -2$, is that $K$ must be 2-bridge to satisfy $u(K) = 1$. We conjecture that this is true in greater generality (i.e. for the remaining $p+q=2,4$ cases).

When $p+q = 0$, we do two things. First, we use the Alexander module of the pretzel to conclude that $m = 1$. Second, we employ the correction terms of $\Sigma(K)$ as defined by Ozsv\'ath and Szab\'o to prove that $p = 3$. This last part is a two-step procedure in which we first consider the Ozsv\'ath-Szab\'o obstruction modulo $\mathbb{Z}$ to narrow down possible $\Spinc$-structure labellings compatible with the required symmetries, before making use of the full obstruction to complete the proof in these restricted cases.

Our results give us evidence for the truth of our conjecture, which would leave only the chiral knot $P(3,-3,2)$ and its reflection as the non-2-bridge knots with unknotting number one.

\subsection{Acknowledgements}

The authors would like to extend their thanks to Cameron Gordon, Josh Greene, Raymond Lickorish, and Andrew Lobb for helpful discussions, and to Ana Lecuona and Brendan Owens for their careful reading of and insightful comments on preliminary versions. They would also like to thank their reviewers for many helpful suggestions. DB is supported in part by EPSRC Grants EP/H0313671, EP/G0395851 and EP/J1075308, and thanks the LMS for their Scheme 2 Grant. JG is supported by the Rector's Award, SOF, and Roth Fellowship at Imperial College London. ES is partially supported by NSF RTG Grant DMS-0636643.

\section{Preliminary Work: Signature Requirements}

We use the following theorem to determine the signature of our pretzels. It is Theorem 6 in \cite{gordon2}.

\begin{thm}[Gordon-Litherland]
For any checkerboard-coloured diagram $D$ of the knot $K$ with associated Goeritz matrix $G(D)$,
$$\sigma(K)=\sgn(G(D))-\mu(D),$$
where $\mu(D)$ is the correction term of the diagram.
\end{thm}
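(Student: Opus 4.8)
The plan is to realise the Goeritz matrix as a matrix representative of the symmetric Gordon--Litherland pairing on a spanning surface for $K$, and then to compute $\sigma(K)$ four-dimensionally via a double branched cover, where this pairing reappears as an intersection form. First I would build the spanning surface: starting from the checkerboard-coloured diagram $D$, shade one of the two colours, cap each shaded region with a disc, and join these discs by a half-twisted band at every crossing. The result is a compact surface $F \subset S^3$, in general non-orientable, with $\partial F = K$. A deformation-retract argument identifies $H_1(F)$ with the reduced cycle space of the planar graph dual to the shaded regions, which is precisely the index set of the reduced Goeritz matrix $G(D)$.

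Next I would define the Gordon--Litherland form $\mathcal{G}_F$ on $H_1(F)$ by $\mathcal{G}_F(a,b) = \tfrac12\big(\mathrm{lk}(a^{+},b) + \mathrm{lk}(a^{-},b)\big)$, where $a^{\pm}$ denote the two normal pushoffs of a cycle $a$ off $F$; symmetry of $\mathcal{G}_F$ is a direct linking-number check. The core computation is then local: evaluating $\mathcal{G}_F$ on the natural basis of cycles reduces to a crossing-by-crossing count of the contribution of each half-twisted band, and this reproduces exactly the off-diagonal and diagonal entries of $G(D)$ once the crossing signs $\eta(c) = \pm1$ are fixed by the usual type I / type II convention. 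Hence $G(D)$ represents $\mathcal{G}_F$, and in particular $\sgn(G(D)) = \sgn(\mathcal{G}_F)$.

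To connect $\sgn(\mathcal{G}_F)$ to $\sigma(K)$ I would push the interior of $F$ into the $4$-ball to obtain a properly embedded surface $\widehat{F} \subset B^4$ with $\partial \widehat{F} = K$, and take the double cover $W_F$ of $B^4$ branched over $\widehat{F}$. Then $\partial W_F = \Sigma(K)$, the group $H_2(W_F)$ is identified with $H_1(F)$, and the intersection form of $W_F$ is $\mathcal{G}_F$, so $\sgn(W_F) = \sgn(G(D))$. Performing the same construction with an orientable Seifert surface $S$ — for which the pairing is the symmetrised Seifert form and the normal Euler number vanishes — yields a manifold $W_S$ with $\partial W_S = \Sigma(K)$ and $\sgn(W_S) = \sigma(K)$. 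Gluing $W_F$ to $-W_S$ along $\Sigma(K)$ gives the double branched cover of $S^4$ over a closed surface, and Novikov additivity together with the $G$-signature theorem computes the difference $\sgn(W_F) - \sigma(K)$ as (half) the normal Euler number $\tfrac12 e(\widehat{F})$ of the pushed-in spanning surface.

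Finally I would verify, by the same local crossing count used for $G(D)$, that $\tfrac12 e(\widehat{F})$ equals the correction term $\mu(D)$, a signed count of the type II crossings of $D$; rearranging yields $\sigma(K) = \sgn(G(D)) - \mu(D)$. I expect the conceptual steps — that branched-cover signatures compute $\sigma(K)$ up to an Euler-number correction — to be routine, and the genuine difficulty to lie almost entirely in the sign bookkeeping: matching the combinatorial data $(\eta(c), \mu(D))$ of the Goeritz matrix to the geometric data $(\mathcal{G}_F, e(\widehat{F}))$ of the surface, where the orientation and crossing-type conventions must be tracked with care so that every sign lands correctly.
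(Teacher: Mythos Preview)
The paper does not prove this theorem at all: it is quoted as Theorem 6 of \cite{gordon2} and used as a black box to compute signatures of pretzels. So there is no ``paper's own proof'' to compare against.

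That said, your proposal is essentially the original Gordon--Litherland argument: build the (possibly non-orientable) spanning surface $F$ from the checkerboard colouring, identify the Gordon--Litherland pairing on $H_1(F)$ with the Goeritz matrix by a local crossing computation, push $F$ into $B^4$ and take the double branched cover $W_F$ so that the intersection form of $W_F$ is $\mathcal{G}_F$, do the same with a Seifert surface to get $W_S$ with $\sgn(W_S)=\sigma(K)$, and compare the two via Novikov additivity and the $G$-signature theorem, picking up the normal Euler number of $\widehat{F}$ as the discrepancy. Your identification of $\tfrac{1}{2}e(\widehat{F})$ with $\mu(D)$ via a signed count of type II crossings is exactly what Gordon and Litherland do. The sketch is correct and the caveat you flag --- that the whole thing is an exercise in sign conventions --- is accurate; the only step you might want to say more about is the identification of $H_2(W_F)$ with $H_1(F)$ and of the intersection form with $\mathcal{G}_F$, which in the original paper goes through an explicit handle description of $W_F$.
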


As a brief note before continuing, because we will always be using the same diagram for our pretzels, we will write $G(K)$ and $\mu(K)$ with this diagram understood. Moreover, when we speak of the determinant of $K$, this will always be positive. The determinant of $G(K)$, however, can be signed, and this is important for our later classification. Thus, in general, $\det K = \abs{\det G(K)}$.

\begin{figure}[h]
\centering
\subfigure[]{
\includegraphics[clip = true, scale = .35]{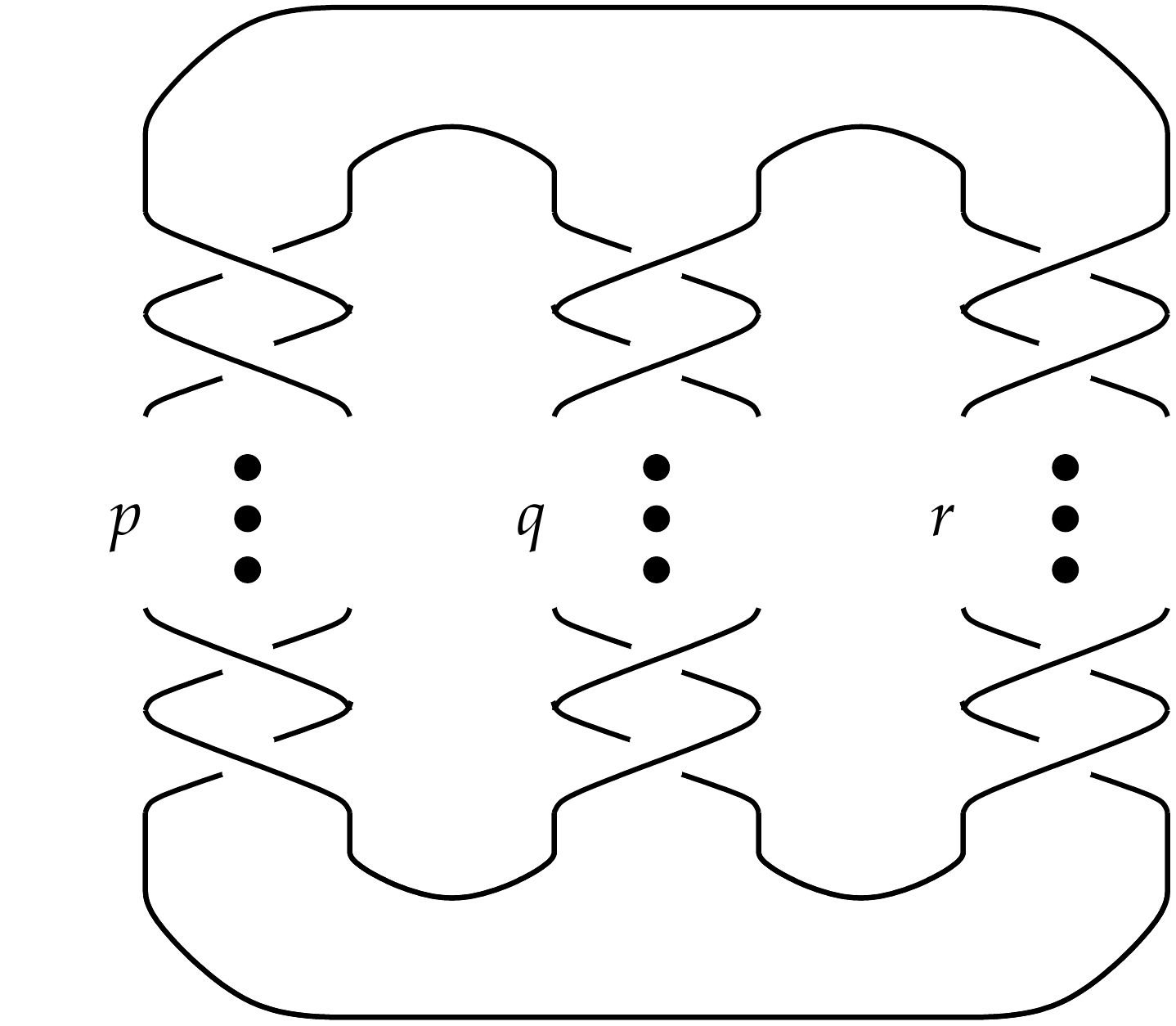}
\label{fig:Pretzel1}
}\hskip1in
\subfigure[]{
\includegraphics[clip = true, scale = .35]{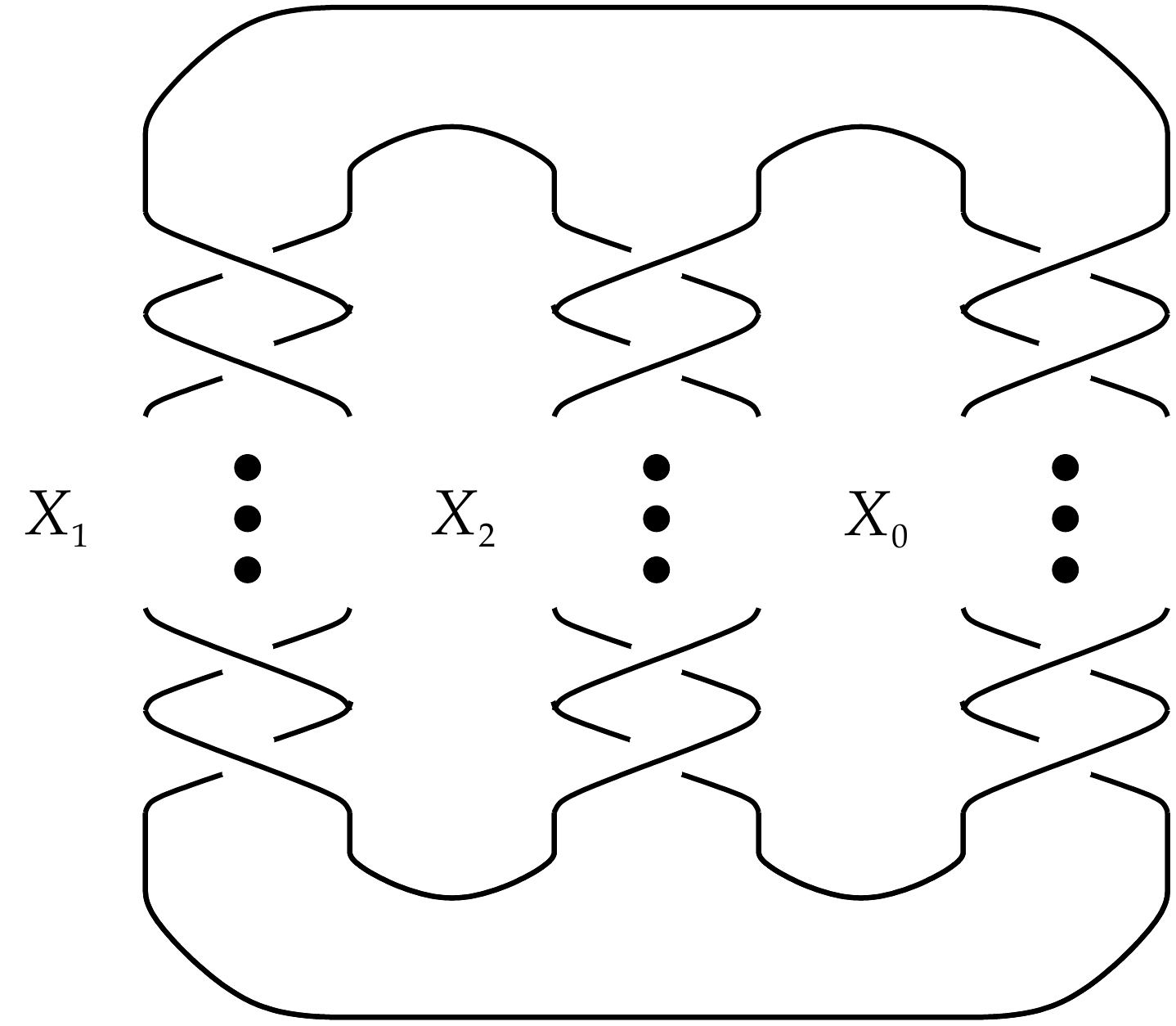}
\label{fig:Pretzel2}
}
\caption{(a) Pretzel knot $P(p,q,r)$, where $p<0$, and $q,r>0$; and (b) the knot $P(p,q,r)$ with a checkerboard colouring.}
\end{figure}

With these conventions in mind, we apply the theorem above to a standard diagram of the knot $P(p,q,r)$, Figure \ref{fig:Pretzel1}, where $p$ and $q$ are odd, and $r$ is even.  By shading and labelling the three regions of Figure \ref{fig:Pretzel1} as marked by the $X_i$ in \ref{fig:Pretzel2}, we obtain the Goeritz matrix of $K$:\\

\begin{center}$G(K)=
\left(\begin{array}{cc} p+r&-p\\
-p&p+q\end{array}\right).$
\end{center}
\vskip.15in

Note that the matrix $G(K)$ is $2\times2$, and therefore $\sgn (G(K))\in\{-2,0,2\}$.  In particular if $u(K)=1$, then $\mu$ is restricted to $\{-4,-2,0,2,4\}$. According to \cite{gordon2}, the correction term $\mu$ is the sum of the crossing numbers in the $p$ and $q$ columns. Since $|p|,|q|,|r|>1$, if $p$ and $q$ are both the same sign then $\abs{\mu}=\abs{p}+\abs{q}\geq6$, a contradiction. So without loss of generality, take $p>0$ and $q<0$. Furthermore the reflection invariance of unknotting number allows us to assume $r=2m>0$ (we ignore $m = 0$ for reasons below). Relabel the knot $K=P(p,q,r)$ as $K=P(k,-k+n,2m)$, where $m>0$, $k>1$ odd, and $n\in\{-4,-2,0,2,4\}$. The Goeritz matrix thus becomes:

\begin{center}$G(K)=
\left(\begin{array}{cc} k+2m&-k\\
-k&n\end{array}\right),$\end{center}

\noindent which implies

$$\det(G(K))=-k^2+kn+2mn.$$

If we consider the case when $n = -4$, then we compute easily that $\sigma(K) = 4$, which is not within the range for unknotting number one. Also, if $n = 4$ and $\det G(K) <0$, then $\sigma(K) = -4$, and we can rule this possibility out for the same reason. Hence, we have five remaining cases:

\begin{center}
\begin{tabular}{c||ccccc}
Case &$n$ &$\det G(K)$ &$\sigma(K)$\\
\hline
\hline
1 &$-2$ & &$2$\\
2 &$0$ & &$0$\\
3a &$2$ &$<0$ &$-2$\\
3b &$2$ &$>0$ &$0$\\
4 &$4$ &$>0$ &$-2$\\
\end{tabular}
\end{center}

The first two of these are treated in Sections \ref{s:-2}, \ref{s:0a}, and \ref{s:0b}, while the remaining cases are the domain of our concluding remarks in Section \ref{s:2,4}.

As a final remark in this section, although we mentioned that we will only be considering $m > 0$, for completeness we can dismiss $m = 0$ immediately. In this instance, $P(p,q,0) = T(p,2)\#T(q,2)$, and since unknotting number one knots are prime (see Scharlemann \cite{ScharlemannPrime} or Zhang \cite{Zhang}), it follows that one of $p,q=\pm 1$. Then, as mentioned in the Introduction (via \cite{Mrowka} and \cite{Rasmussen}), since the signature of torus knots is a tight bound on $u(K)$, and $\sigma(T(k,2)) = \half (k-1)$ for $k \geq 1$, we obtain the following result.

\begin{lem}
If $K = P(p,q,0)$ and $u(K) = 1$, for $p,q$ odd, then $pq=\pm3$.
\end{lem}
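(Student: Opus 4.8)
The plan is to recognise $P(p,q,0)$ as a connected sum of torus knots and then appeal to two standard facts: that unknotting-number-one knots are prime, and that the unknotting number of $T(2,k)$ equals its genus. First I would read off Figure~\ref{fig:Pretzel1} with $r=0$: the third band then carries no crossings, the diagram visibly separates into two twist regions, and
$$P(p,q,0) \;=\; T(p,2)\,\#\,T(q,2).$$

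Next, since $u(K)=1$ the knot $K$ is prime by Scharlemann \cite{ScharlemannPrime} (see also Zhang \cite{Zhang}), so one of the two summands above must be the unknot. As $T(k,2)$ is trivial precisely when $k=\pm1$, after relabelling we may assume $q=\pm1$, so that $K=T(p,2)$; note that if \emph{both} $p$ and $q$ were $\pm1$ then $K$ would be the unknot, with $u(K)=0$, so this subcase is excluded. It then remains to decide for which odd $p$ one has $u(T(p,2))=1$. Here I would invoke the resolution of the Milnor conjecture (Kronheimer--Mrowka \cite{Mrowka}; or Rasmussen \cite{Rasmussen} via the $s$-invariant), which gives $u(T(p,2))=g(T(p,2))=\tfrac{\abs{p}-1}{2}$; equivalently, $\abs{\sigma(T(p,2))}=\abs{p}-1=2u(T(p,2))$, so the signature bound is attained for these knots. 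Setting $\tfrac{\abs{p}-1}{2}=1$ forces $\abs{p}=3$, and combined with $q=\pm1$ this yields $pq=\pm3$.

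There is no genuinely hard step here: the substance of the argument is the assembly of cited results. The only points that require care are the first line — verifying the connected-sum identity $P(p,q,0)=T(p,2)\#T(q,2)$ directly from the standard diagram, and keeping the mirror conventions consistent so that the torus-knot (un)knotting formula is applied with the correct chirality — together with the bookkeeping of the degenerate subcase noted above.
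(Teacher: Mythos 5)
Your proof is correct and follows essentially the same route as the paper: identify $P(p,q,0)=T(p,2)\#T(q,2)$, invoke primeness of unknotting-number-one knots (Scharlemann, Zhang) to force one parameter to be $\pm1$, and then use the sharpness of the signature bound for torus knots (Kronheimer--Mrowka, Rasmussen) to force the other to be $\pm3$. Your explicit handling of the degenerate case $p,q$ both $\pm1$ is a small point the paper leaves implicit, but otherwise the arguments coincide.
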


\section{The Case $p+q=-2$}
\label{s:-2}

In this section we consider $K = P(k,-k-2,2m)$ where $k$ odd, $k\geq1$, and $m>0$.  Our method has two main ingredients: the signed Montesinos theorem and Greene's application of Donaldson's diagonalisation theorem to $u(K) = 1$. Our main theorem is the following.

\begin{thm}
\label{-2}
Suppose that $k,m > 0$ and $k$ is odd. Then $P(k,-k-2,2m)$ has unknotting number one if and only if $k =1$.
\end{thm}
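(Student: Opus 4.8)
The plan is to attack this via the double branched cover. By the Montesinos theorem, if $u(K) = 1$ then $\Sigma(K) = S^3_{\pm D/2}(\kappa)$ for some knot $\kappa \subset S^3$, where $D = \det K$. The first step is to compute $\det K$ from the Goeritz matrix: with $n = -2$ we have $\det G(K) = -k^2 - 2k - 4m$, so $D = k^2 + 2k + 4m$, which is odd as required. Next I would identify $\Sigma(K)$ explicitly as the boundary of a canonical negative-definite plumbing $X$ (a linear or star-shaped plumbing on a weighted tree coming from the Seifert data of the pretzel; for $P(k,-k-2,2m)$ this is routine, turning continued-fraction expansions of the surgery coefficients into the weights). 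Simultaneously, if $u(K) = 1$, then half-integral surgery $S^3_{\pm D/2}(\kappa)$ bounds a second 4-manifold $W$: trace the surgery and cap, or use the standard trick of forming a negative-definite manifold from the surgery description. Gluing $X$ to $-W$ (after reversing orientation suitably) along $\Sigma(K)$ produces a closed, oriented, smooth, simply connected, negative-definite 4-manifold $Z$.

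Then I would apply Donaldson's diagonalisation theorem: the intersection form of $Z$ must be the standard diagonal form $-\langle 1 \rangle^{\oplus N}$. This forces the intersection lattice of the plumbing $X$ to embed into $\mathbb{Z}^N$ with the standard form, and one extracts numerical constraints on $k$ and $m$ from the existence of such an embedding (the images of the plumbing generators are integer vectors with prescribed square norms and pairwise products). The expectation, flagged already in the excerpt's Organisation section, is that bare Donaldson is \emph{not} sharp enough here — it will cut down the families but leave an infinite residue. So the real engine is Greene's refinement: because the surgery is \emph{half-integral} (coefficient $\pm D/2$ with $D$ odd), the manifold $W$ has a more rigid structure — its intersection form is a specific rank-two (or bounded-rank) form $\begin{pmatrix} \cdot & \cdot \\ \cdot & \cdot \end{pmatrix}$ controlled by the Alexander polynomial / surgery formula (Greene's "changemaker" condition). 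I would invoke Greene's lattice-embedding obstruction: the vector arising from the half-integral surgery must be a changemaker vector in the diagonalised lattice, and its pairing with the plumbing sublattice is tightly constrained. Running this constraint against the explicit plumbing for $P(k,-k-2,2m)$ should leave only finitely many $(k,m)$, and inspection of these — together with the easy direction, namely exhibiting an explicit unknotting crossing for $P(1,-3,2m)$ (which is 2-bridge, with $u = 1$ already known from Kanenobu–Murakami) — completes the classification to $k = 1$.

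The main obstacle I anticipate is the combinatorial heart of the changemaker argument: correctly writing down the plumbing graph for $\Sigma(P(k,-k-2,2m))$ with the right signs and orientations (so that it is genuinely negative-definite and glues correctly to the surgery cobordism), and then carrying out the case analysis of how a changemaker vector can pair with that lattice. Sign bookkeeping between the two cases $S^3_{+D/2}$ and $S^3_{-D/2}$ (equivalently the two reflections of $K$, which interact with the signature constraint $\sigma(K) = 2$ from Case 1) will need care, and the changemaker inequalities typically split into several subcases depending on the relative sizes of $k$ and $m$. I would also need the mild input that $k = 1$ genuinely gives a pretzel satisfying the hypotheses (so the statement is not vacuous), which is immediate since $P(1,-3,2m)$ is a 2-bridge knot of the form covered by the Kanenobu–Murakami/Torisu classification.
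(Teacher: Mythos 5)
Your proposal follows essentially the same route as the paper's proof: the signed Montesinos theorem giving the half-integral surgery 4-manifold, a negative-definite plumbing bounded by $\Sigma(K)$, Donaldson diagonalisation, and then Greene's refinement constraining the resulting embedding matrix, with the $k=1$ knots $P(1,-3,2m)$ handled by an explicit unknotting crossing. The only points you elide are the verification of Greene's hypotheses (that $\Sigma(K)$ is an $L$-space, that the plumbing is sharp, and that $d(\Sigma(K),0) = -d(L(\det K,2),0)$), which the paper checks before the lattice analysis, and the fact that the analysis rules out all $k \geq 3$ uniformly in $m$ rather than leaving finitely many cases to inspect.
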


Recall that when $K = P(k,-k-2,2m)$ we have $\sigma(K) = 2$. Since the conclusion to the above theorem is that $k = 1$, we aim to prove it by establishing that if $k \geq 3$ then $u(K) \neq 1$. In the case $k = 1$, we can change any crossing in the central column to obtain $P(1,-1,2m)$, which is manifestly the unknot. The only knots with $p+q=-2$ not treated, then, are those of the form $P(-1,-1,2m)$, and the fact that $u(K) = 1$ is clear in that instance.

As mentioned, our first ingredient is the ``signed" version of the Montesinos theorem (see Proposition 4.1 of \cite{GreeneBraid}).
\begin{thm}[Signed Montesinos]\label{thm:montesinos} Suppose that $K$ is a knot that is undone by changing a negative crossing (so $\sigma(K)=0,2$). Then $\Sigma(K)=S^3_{-\epsilon D/2}(\kappa)$ for some knot $\kappa\subset S^3$, where $D=\det(K)$, and $\epsilon=(-1)^{\half\sigma(K)}$.  In particular, $-\Sigma(K)=S^3_{\epsilon D/2}(\overline{\kappa})$ bounds a smooth, simply connected, 4-manifold $W_K$ with $\epsilon$-definite intersection form $-\epsilon R_n$, where
$$R_n= \left(
  \begin{array}{cc}
               -n&1\\ 
               1&-2\\ 
  \end{array}
\right)$$ 
and $D=2n-1$.
\end{thm}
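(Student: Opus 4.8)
\noindent The plan is to run the classical Montesinos trick to produce a surgery description of $\Sigma(K)$, and then refine it by a sign analysis. By hypothesis there is a ball $B\subset S^3$ meeting $K$ in a trivial two-string tangle that carries a single negative crossing, and replacing this tangle by the one with that crossing reversed turns $K$ into the unknot. A trivial tangle is $(D^2,\{2\text{ pts}\})\times I$, so its double branched cover is a solid torus; writing $M$ for the double branched cover of the complementary tangle $(S^3\setminus B,\,K\setminus B)$ and $T=\partial M$ for the resulting torus, we obtain $\Sigma(K)=M\cup_{T}V$ and $S^3=\Sigma(\text{unknot})=M\cup_{T}V'$, where $V$ and $V'$ are solid tori --- the branched covers of the two fillings of $B$, one carrying $K$'s crossing and the other its reverse --- glued along slopes $s,s'$ on $T$. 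The second identity exhibits $M$ as the exterior of a knot $\kappa\subset S^3$ whose meridian is $s'$, and hence $\Sigma(K)=M\cup_{T}V$ is Dehn surgery on $\kappa$ along $s$.

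It is then immediate that this surgery is half-integral. The denominator of the coefficient is the geometric intersection number of $s$ with the meridian $s'$ on $T$; the two filling slopes correspond, under the $GL_2(\mathbb{Z})$-identification of tangle slopes with slopes on the branched-cover torus, to the slopes $\pm1$, which sit at Farey distance two, so this number is $\abs{\det\begin{pmatrix}1&1\\1&-1\end{pmatrix}}=2$. Write $\Sigma(K)=S^3_{p/2}(\kappa)$; then $\abs{p}=\abs{H_1(\Sigma(K))}=\det K=D$, which is odd, so $D=2n-1$ for a unique $n\ge 1$ and $\Sigma(K)=S^3_{\pm D/2}(\kappa)$.

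The step I expect to be the real obstacle is fixing the sign: showing the coefficient is $-\epsilon D/2$ with $\epsilon=(-1)^{\half\sigma(K)}$. The hypothesis that it is a \emph{negative} crossing being changed breaks the symmetry between $s$ and $s'$, and this must be fed into the way $\sigma$ behaves under a crossing change. One route is four-dimensional: the double branched cover of the trace of the $\pm1$-surgery on the crossing circle is a cobordism $X$ from (a fixed orientation of) $\Sigma(K)$ to $S^3$, and the $G$-signature theorem evaluates $\sigma(X)$ in terms of $\sigma(K)$ and the sign of the crossing, which in turn pins down the orientation of the surgery solid torus. A more hands-on route uses data already developed in this paper: the sign can be extracted from the signed Goeritz form $G(K)$ of a diagram adapted to the crossing, via $\sigma(K)=\sgn G(K)-\mu(K)$ together with the sign of $\det G(K)$ relative to $\det K=\abs{\det G(K)}$. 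Either way one arrives at $\Sigma(K)=S^3_{-\epsilon D/2}(\kappa)$.

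Finally, the ``in particular'' is a Kirby-calculus computation independent of $K$. Reversing orientation gives $-\Sigma(K)=S^3_{\epsilon D/2}(\overline{\kappa})$, and the continued-fraction expansion $\tfrac{2n-1}{2}=n-\tfrac12$ presents this manifold as $\partial W_K$, where $W_K$ is obtained from $B^4$ by attaching $2$-handles along the two-component chain $\overline{\kappa}\cup u$: here $u$ is an unknotted meridian of $\overline{\kappa}$ with $\mathrm{lk}(\overline{\kappa},u)=-\epsilon$, the framing on $\overline{\kappa}$ is $\epsilon n$, and the framing on $u$ is $2\epsilon$. Attaching the $u$-handle first gives a simply connected manifold with boundary $L(2,1)$, and $\overline{\kappa}$ represents the generator of $\pi_1(L(2,1))\cong\mathbb{Z}/2$, so the second handle kills $\pi_1$; hence $W_K$ is simply connected. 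Its intersection form is the linking matrix of the chain, namely $-\epsilon R_n$, and since $\det R_n=2n-1=D>0$ and $\mathrm{tr}\,R_n=-n-2<0$, the matrix $R_n$ is negative definite, whence $-\epsilon R_n$ is $\epsilon$-definite, as required.
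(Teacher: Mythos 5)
Your proposal correctly reproduces the two classical ingredients of the statement: the Montesinos trick (the double branched cover of the crossing ball is a solid torus, so $\Sigma(K)$ is a distance-two, hence half-integer, surgery on a knot $\kappa$, with numerator $\abs{H_1(\Sigma(K))}=\det K$), and the Kirby-calculus construction of the two-handlebody $W_K$ with linking matrix $-\epsilon R_n$ and boundary $S^3_{\epsilon D/2}(\overline{\kappa})$ (simple connectivity there is automatic, since any $2$-handlebody built on $B^4$ is simply connected by van Kampen, so the detour through $\pi_1(L(2,1))$ is unnecessary). But the clause that makes this the \emph{signed} Montesinos theorem --- that a \emph{negative} unknotting crossing forces the coefficient to be $-\epsilon D/2$ with $\epsilon=(-1)^{\half\sigma(K)}$ --- is exactly the step you do not prove. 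You flag it as ``the real obstacle,'' name two candidate strategies, and then assert that ``either way one arrives at'' the conclusion; since everything else in the statement is standard, this sign is the theorem, and leaving it at the level of a sketch is a genuine gap. Your second suggested route (extracting the sign from the Goeritz form of a diagram adapted to the crossing) does not work as stated: Gordon--Litherland computes $\sigma(K)$, but nothing in the sign of $\det G(K)$ for a chosen diagram detects which of the two slopes at distance two from the meridian of $\kappa$ is being filled. Your first route is the correct one, but it must actually be carried out: realize the crossing change as a $\pm1$-framed $2$-handle attached along a circle bounding a disc meeting $K$ in two points, pass to double branched covers to obtain a cobordism $X$ from $S^3$ to $\Sigma(K)$ consisting of a single $2$-handle attached along $\kappa$, and then compute $\sigma(X)=\pm1$ (equivalently, the sign of the square of the generator of $H_2(X)$, which is the sign of the surgery coefficient) in terms of $\sigma(K)-\sigma(\mathrm{unknot})$ and the sign of the changed crossing, e.g.\ via the signatures of double covers of $B^4$ branched over pushed-in spanning surfaces together with Novikov additivity (or the $G$-signature theorem). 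Without that computation the identification of $\epsilon$, and hence the theorem, is unsupported.

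For context, the paper itself offers no proof to compare against: it imports the statement verbatim as Proposition 4.1 of \cite{GreeneBraid}, where precisely the four-dimensional sign analysis sketched above is performed. So the burden of a self-contained argument falls exactly on the step your proposal leaves open.
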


As we have $\sigma(K) = 2$, if $u(K) = 1$ then $-\Sigma(K)$ bounds a negative-definite 4-manifold $W_K$ from Theorem \ref{thm:montesinos}.  In order to use Donaldson's Theorem A we need another $4$-manifold which is bounded by $\Sigma(K)$, call this $X_K$, with intersection form $Q_K$, so that we can glue them together to obtain a closed manifold $X = X_K \cup_{\Sigma(K)} W_K$. Since the boundary $\Sigma(K)$ is a rational homology 3-sphere, $Q_K \oplus R_n$ embeds into the intersection form $Q_X$ of $X$, as can be seen from the Mayer-Vietoris sequence (see \cite{GreeneBraid}). As $W_K$ is simply connected (by Theorem \ref{thm:montesinos}), if $X_K$ is simply connected then so too is $X$. We are now ready to use Donaldson's Theorem A (see \cite{Donaldson}).

\begin{thm}[Donaldson]\label{thm:donaldson} Let $X$ be a closed, oriented, simply connected, smooth 4-manifold. If the intersection form $Q_X$ is negative-definite, then $Q_X$ diagonalises over the integers to $-\id$.
\end{thm}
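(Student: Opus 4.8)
The statement to be proved, Theorem \ref{thm:donaldson}, is Donaldson's diagonalisation theorem, a deep gauge-theoretic result; the plan is to sketch its proof via the moduli space of anti-self-dual (ASD) instantons, following Donaldson's original argument. Writing $Q = Q_X$ for the negative-definite form and $n = \rank(Q) = b_2(X)$ (so that $b^+(X) = 0$ and $\sigma(X) = -n$), the goal is purely lattice-theoretic: to show that the unimodular lattice $(H^2(X;\Z),Q)$ is isometric to $\langle -1 \rangle^{\oplus n}$, i.e.\ diagonalises to $-\id$. The gauge theory will produce a single topological relation --- an oriented cobordism --- from which this is deduced combinatorially.

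First I would fix an $SU(2)$-bundle $E \to X$ with $c_2(E) = 1$ and study the moduli space $\mathcal{M}$ of ASD connections on $E$ modulo gauge. A routine index computation gives its expected dimension $8c_2(E) - 3(1 + b^+(X)) = 5$. The reducible connections are exactly the splittings $E = L \oplus L^{-1}$; the ASD and charge conditions force $c_1(L)^2 = -1$, so --- since $b^+ = 0$ makes every integral class ASD-representable --- the reducibles correspond bijectively to the unordered pairs $\{\pm e\}$ with $e \in H^2(X;\Z)$ and $Q(e,e) = -1$. Call the number of such pairs $r$.

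The technical core is to establish the global structure of $\mathcal{M}$. Here I would need: (i) generic smoothness of the irreducible part as a $5$-manifold, via a Sard--Smale transversality argument over a Banach space of metrics or perturbations; (ii) the Uhlenbeck compactification, showing that the only source of noncompactness is an instanton bubbling at a point, so that a collar of $X$ itself appears as one boundary stratum; (iii) the local model near each reducible, which is a cone on $\mathbb{CP}^2$ (the link being $\mathbb{CP}^2$ because the isotropy group is $U(1)$ and the dimension count matches); and (iv) a coherent orientation of $\mathcal{M}$. Excising small cone neighbourhoods of the $r$ reducibles and the bubbling end then exhibits a compact, oriented, smooth $5$-manifold $W$ with
$$\partial W = X \sqcup \left(\bigsqcup_{i=1}^{r} \mathbb{CP}^2\right),$$
that is, an oriented cobordism between $X$ and $r$ copies of $\mathbb{CP}^2$ carrying (after step (iv)) compatible orientations.

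The conclusion then follows from cobordism invariance. Since the signature of the boundary of a compact oriented $5$-manifold vanishes and $\sigma(\mathbb{CP}^2) = 1$, the displayed relation pins down $r$ against $\sigma(X) = -n$ and forces $r = n$: the lattice has exactly $2n$ vectors of square $-1$. A short lattice-theoretic lemma finishes the job: any two non-proportional vectors of square $-1$ in a negative-definite form are orthogonal (by strict Cauchy--Schwarz), so the $n$ pairs yield $n$ pairwise orthogonal vectors of square $-1$, spanning a unimodular sublattice $\langle -1 \rangle^{\oplus n}$ of full rank, whence $Q \cong \langle -1 \rangle^{\oplus n}$. The main obstacle is unquestionably steps (ii)--(iii): the analytic package of Uhlenbeck compactness, together with the precise identification of the collar of $X$ at the bubbling end and of the cone-on-$\mathbb{CP}^2$ links at the reducibles, is exactly where the hard gauge theory lives, and it is what makes the otherwise formal cobordism count rigorous.
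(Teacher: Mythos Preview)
The paper does not prove Theorem \ref{thm:donaldson}; it is quoted as a black-box input with a citation to \cite{Donaldson} and then applied. Your outline is a faithful sketch of Donaldson's original instanton argument, so in that sense your ``different route'' is simply supplying a proof where the paper offers none.

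One small logical imprecision is worth flagging. In step (iv) you assert that the $\mathbb{CP}^2$ links all inherit the same orientation, so that cobordism invariance of the signature directly yields $r = n$. In the standard argument the signs $\epsilon_i = \pm 1$ attached to the individual links are not known a priori; what the cobordism gives is $-n = \sigma(X) = \sum_{i=1}^{r} \epsilon_i$, hence only $n \leq r$. The reverse inequality $r \leq n$ is exactly the lattice lemma you state afterwards (distinct square $-1$ classes in a definite form are orthogonal, hence there are at most $n$ such pairs). Combining the two gives $r = n$, and then the orthogonal square $-1$ vectors span a full-rank unimodular sublattice, forcing $Q \cong \langle -1\rangle^{\oplus n}$. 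All of your ingredients are correct; they just need to be invoked in this order rather than deducing $r = n$ from the cobordism alone.
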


In the light of the above comments, we have the following corollary.

\begin{cor}\label{diag}
If $X = X_K \cup_{\Sigma(K)} W_K$ is simply connected and negative-definite, then there exists an integral matrix $A$ such that
\begin{equation}
\label{A}
-AA^t = Q_K \oplus R_n.
\end{equation}
\end{cor}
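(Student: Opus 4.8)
The plan is to read the corollary off Donaldson's Theorem~\ref{thm:donaldson} together with the Mayer--Vietoris observation recorded immediately before its statement; once the hypotheses of Donaldson's theorem have been checked for $X$, everything that remains is linear algebra.

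First I would verify that $X = X_K \cup_{\Sigma(K)} W_K$ meets the hypotheses of Theorem~\ref{thm:donaldson}. It is closed, being obtained by gluing two manifolds-with-boundary along the whole of their common boundary $\Sigma(K)$; it is smooth and orientable (orient $X_K$ so that $\partial X_K = \Sigma(K)$ and $W_K$ so that $\partial W_K = -\Sigma(K)$, as in Theorem~\ref{thm:montesinos}); and it is simply connected and negative-definite by the standing assumption of the corollary. Donaldson's theorem then supplies a basis $e_1,\dots,e_N$ of $H_2(X;\Z)/\mathrm{tors}$, where $N = \rank H_2(X)$, with respect to which the form $Q_X$ is $-\id_N$.

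Next I would make the inclusion $Q_K \oplus R_n \hookrightarrow Q_X$ precise. Since $\Sigma(K)$ is a rational homology sphere, the Mayer--Vietoris sequence for $X = X_K \cup W_K$ shows that the natural map $\big(H_2(X_K;\Z)/\mathrm{tors}\big) \oplus \big(H_2(W_K;\Z)/\mathrm{tors}\big) \to H_2(X;\Z)/\mathrm{tors}$ is injective with finite cokernel, and it is an isometry onto its image when the target carries $Q_X$ and the source carries $Q_K \oplus R_n$ (the two summands are orthogonal because their representatives can be pushed into $X_K$ and $W_K$ respectively, overlapping only along $\Sigma(K)$, which contributes nothing rationally). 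In particular $N = \rank Q_K + 2$, and $Q_K \oplus R_n$ is the Gram matrix of a finite-index sublattice of $(\Z^N, -\id)$.

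Finally, pick generators $f_1,\dots,f_N$ for that sublattice and expand each in the Donaldson basis, $f_i = \sum_j A_{ij} e_j$ with $A_{ij} \in \Z$; then the integral matrix $A = (A_{ij})$ satisfies $(Q_K \oplus R_n)_{ik} = \langle f_i, f_k\rangle_{Q_X} = -\sum_j A_{ij}A_{kj} = -(AA^t)_{ik}$, which is exactly \eqref{A}. I do not expect a genuine obstacle: the one point requiring care is that the decomposition $Q_X \supseteq Q_K \oplus R_n$ is honest --- orthogonality of the two blocks and matching of ranks --- and this is precisely where the hypothesis that $\Sigma(K)$ is a rational homology sphere (already used in the paragraph preceding the corollary) enters; the rest is the routine bookkeeping of writing a sublattice of a diagonal unimodular lattice in coordinates.
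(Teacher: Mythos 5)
Your proposal is correct and follows essentially the same route as the paper: the corollary is obtained by combining the Mayer--Vietoris embedding of $Q_K \oplus R_n$ into $Q_X$ (using that $\Sigma(K)$ is a rational homology sphere) with Donaldson's Theorem~\ref{thm:donaldson}, and then expressing the sublattice generators in the diagonalising basis to produce $A$. Your write-up simply makes explicit the bookkeeping (orientations, rank count, orthogonality of the two blocks) that the paper leaves to the remarks preceding the corollary.
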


Thus, if we can show that there does not exist an $A$ satisfying \eqref{A}, then $u(K) > 1$ (or $K$ is the unknot). The first question, then, is how to find $X_K$, and for this we use plumbing. A good reference for the following section is \cite{Gompf}.

\subsection{Plumbings}
\label{plumbings}

Let $G$ be a vertex-weighted simple graph with vertex set $V(G)$ and labels $w(v)$ on each $v \in V(G)$. In general, we take $w(v)<0$ since we are mainly concerned with negative-definite manifolds. To construct a 4-manifold $X=X(G)$ from $G$, take the 2-disc bundle $B(v)$ over $S^2$ of Euler number $w(v)$ for each $v \in V(G)$, and plumb $B(v)$ and $B(v^\prime)$ if and only if $v$ and $v^\prime$ are adjacent in $G$. This manifold $X$ has free $H_2(X)$, generated by the homology classes of spheres $S_v$ corresponding to the vertices. We will write these as $[S_v]$.

Supposing that $G$ is a tree, then $X(G)$ is simply connected. The manifold $Y = Y(G) = \partial X$ is given by a Kirby diagram of unknots, linked geometrically according to the weighted adjacency matrix for $G$ (so that the slopes on the components are the weights of the corresponding vertices). The intersection form $Q$ for $X(G)$ is then also the adjacency matrix for $G$. Explicitly, we have $\left<[S_v], [S_v]\right> = w(v)$ for each vertex, and $\left<[S_v], [S_{v^\prime}]\right> = 1$ if the two distinct vertices are connected by an edge, zero otherwise.

Since $\Sigma(K)$ is a Seifert fibred space, it has the surgery presentation given in Figure \ref{fig:firstsurgery}. Hence, we can obtain a plumbing with boundary $\Sigma(K)$ using the corresponding graph. However, for what will follow, this 4-manifold is insufficient since it is not negative-definite. Instead, we use the alternative presentation in Figure \ref{fig:secondsurgery} and the corresponding plumbing shown in Figure \ref{fig:goodplumbing}.

\begin{center}
\begin{figure}[h]
\centering
\subfigure[]{
\includegraphics[scale = .45]{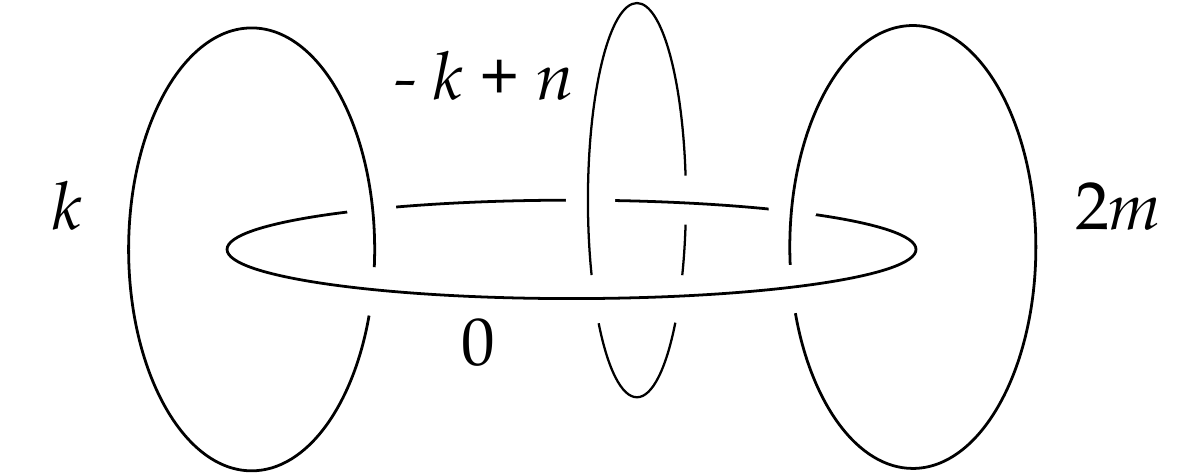}
\label{fig:firstsurgery}
}\hskip1in
\subfigure[]{
\includegraphics[scale = .45]{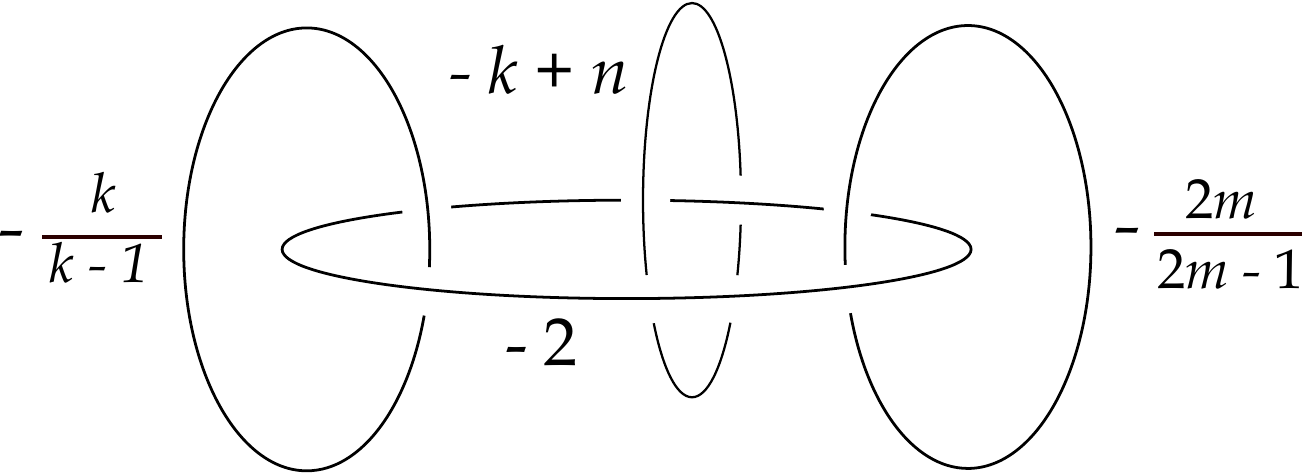}
\label{fig:secondsurgery}
}
\caption{(a) Kirby diagram for $\Sigma(K)$; and (b) alternative Kirby diagram for $\Sigma(K)$.}
\end{figure}
\end{center}

\begin{figure}[h]
\centering
\includegraphics[trim = 0in 4in 0in 5in, scale = 0.5]{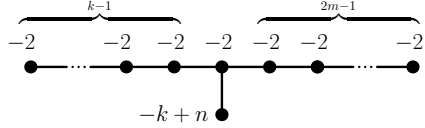}
\caption{A graph $G$ for a plumbing $X_K$ with boundary $\Sigma(K)$. The vertices are labelled from left to right along the top row as $v_1$ to $v_{k+2m-1}$; the final vertex is $v_{k+2m}$.}
\label{fig:goodplumbing}
\end{figure}

Two things must be checked about the plumbing in Figure 3. First, that the boundary is $\Sigma(K)$. This is easily done once we observe that
$$\frac{k}{k-1}=\overset{k-1}{\overbrace{[2,2,\dots,2]}} \qquad \frac{2m}{2m-1}=\overset{2m-1}{\overbrace{[2,2,\dots,2]}}.$$
Here $[a_1,\dots,a_\ell]$ denotes the Hirzebruch-Jung continued fraction. Therefore, as $\partial X_K$ has a Kirby diagram given by unknots linked according to $G$, we can slam-dunk these unknots along the two long arms to obtain the diagram in Figure \ref{fig:secondsurgery}. Performing $+1$ twists around each of the two non-integrally framed unknots will then recover Figure \ref{fig:firstsurgery}.

The second requirement is that $Q_X$, the intersection form of $X_K$, be negative-definite. The key component here is Sylvester's criterion.

\begin{lem}[Sylvester]
Let $M$ be a square matrix and $M_i$ its upper $(i\times i)$-submatrix. Then $M$ is negative-definite if and only if the sign of $\det M_i$ is $(-1)^i$ for all $i$.
\end{lem}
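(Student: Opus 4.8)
This is the classical Sylvester criterion, so the plan is to reduce it to the positive-definite case and then run the standard induction. First I would note that (for the symmetric matrices relevant here) $M$ is negative-definite if and only if $N := -M$ is positive-definite, and that the upper $(i\times i)$-submatrix of $N$ is $-M_i$, so $\det N_i = (-1)^i \det M_i$. Thus the hypothesis $\sgn(\det M_i) = (-1)^i$ for all $i$ is precisely the hypothesis $\det N_i > 0$ for all $i$, and it suffices to prove the symmetric statement: a symmetric matrix $N$ is positive-definite if and only if all of its leading principal minors $\det N_i$ are positive.

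For the forward implication, if $N$ is positive-definite then restricting the quadratic form $x \mapsto x^t N x$ to the coordinate subspace $\R^i \times \{0\} \subset \R^n$ gives a positive-definite form whose Gram matrix is $N_i$; since the eigenvalues of a positive-definite symmetric matrix are all positive, $\det N_i > 0$. For the converse I would induct on the size $n$ of $N$, the case $n = 1$ being immediate. Granting the statement for size $n-1$, the hypothesis makes all leading minors of $N_{n-1}$ positive, so $N_{n-1}$ is positive-definite by the inductive hypothesis; hence $x \mapsto x^t N x$ is positive-definite on the $(n-1)$-dimensional subspace $\R^{n-1} \times \{0\}$. By Sylvester's law of inertia, a symmetric matrix whose associated form is positive-definite on some $d$-dimensional subspace has at least $d$ positive eigenvalues, so $N$ has at least $n-1$ of them. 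Since $\det N = \det N_n \neq 0$, the matrix $N$ is nonsingular and so has no zero eigenvalue; it therefore has either $n$ or $n-1$ positive eigenvalues, and in the latter case $\det N$ would be negative, contradicting $\det N_n > 0$. Hence all eigenvalues of $N$ are positive and $N$ is positive-definite.

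The only delicate point — the main obstacle, modest as it is — is the step in the converse asserting that positivity of the form on an $(n-1)$-dimensional subspace forces $n-1$ positive eigenvalues. I would secure it by invoking Cauchy interlacing for the pair $N_{n-1}$, $N$, or, for a self-contained alternative, by performing symmetric Gaussian elimination: the hypothesis that each leading minor is nonzero is exactly what lets one eliminate without pivoting, producing a factorization $N = L D L^t$ with $L$ unit lower-triangular and $D$ the diagonal matrix with entries $d_i = \det N_i / \det N_{i-1}$ (with the convention $\det N_0 = 1$). Then $N$ is positive-definite if and only if every $d_i$ is positive, which happens if and only if every $\det N_i$ is positive. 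Either route completes the proof, and unwinding the initial reduction gives the stated criterion for negative-definiteness.
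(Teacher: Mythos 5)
Your proof is correct. The paper itself gives no proof of this lemma at all: Sylvester's criterion is quoted as a classical fact and immediately applied to the plumbing intersection form, so there is no argument to compare against. Your write-up supplies a complete standard justification: the reduction from negative- to positive-definiteness via $N=-M$ and $\det N_i=(-1)^i\det M_i$ is exact, the forward direction by restricting the form to coordinate subspaces is fine, and the converse by induction plus the inertia count (at least $n-1$ positive eigenvalues, no zero eigenvalue since $\det N\neq 0$, and the sign of $\det N$ ruling out a single negative eigenvalue) is airtight; the alternative route via the pivoted factorization $N=LDL^t$ with $d_i=\det N_i/\det N_{i-1}$ is equally valid and arguably the cleanest self-contained option. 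One point worth retaining explicitly: the lemma as printed says ``square matrix,'' but the criterion genuinely requires symmetry, which your parenthetical correctly imposes; this costs nothing in the paper's application, since the Goeritz and intersection matrices being tested there are symmetric.
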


Observing that the upper submatrices of $Q_X$, with the exception of the total matrix, are all $-2$ along the diagonal and $1$ in the spots adjacent to the diagonal, the determinants are $(-1)^i(i+1)$. It is then also easy to see that $\det Q_{X_K} = \det G(K) < 0$, and as the rank of $Q_{X_K}$ is odd, we are done.

At this point in the proceedings, we form $X=X_K\cup_{\Sigma(K)}W_K$, which is closed. Unfortunately, however, for this choice of $X$ there always exists an $A$ satisfying \eqref{A}. To get around this problem, we mimic the work of Greene \cite{GreeneBraid}, in which Heegaard Floer homology is used to impose a certain structure on $A$. In order to explain this, we review some Heegaard Floer homology.

\subsection{Correction Terms and Sharpness}
\label{correctsharp}

Ozsv\'ath and\ Szab\'o have shown in \cite{OSAbsolute} that the Heegaard Floer homology of a rational homology sphere $Y$ is absolutely graded over $\mathbb{Q}$. They also give a definition of correction terms, $d(Y,\mathfrak{t})$, which are the minimally graded non-zero part in the image of $HF^\infty(Y,\mathfrak{t})$ inside $HF^+(Y,\mathfrak{t})$. These are strongly connected to the topology of 4-manifolds with $Y$ as boundary, for any such negative-definite, smooth, oriented $X$ which has an $\mathfrak{s} \in \Spin(X)$ such that $\mathfrak{s}\vert_Y = \mathfrak{t}$ must satisfy
\begin{equation}
\label{eq:corterm}
\mathfrak{c}_1(\mathfrak{s})^2 + b_2(X) \leq 4d(Y,\mathfrak{t}).
\end{equation}
A rational homology 3-sphere $Y$ is an \emph{$L$-space} if $\rank\widehat{HF}(Y)=\abs{H_1(Y)}$. Furthermore, a \emph{sharp} 4-manifold $X$ with $L$-space boundary $Y$ is defined by the property that for every $\mathfrak{t}\in\Spinc(Y)$ there is some $\s\in\Spinc(X)$ with $\s\vert_Y=\mathfrak{t}$ that attains equality in the bound \eqref{eq:corterm}.

We are now able to present Greene's theorem. It is proved in \cite{GreeneBraid}. (Our convention for $L(p,q)$ should be taken as the $-p/q$ surgery on the unknot.)

\begin{thm}[Greene] \label{greene} Suppose $K$ is a knot in $S^3$ with unknotting number one such that either (i) $\sigma(K) = 0$ and $K$ can be undone by changing a positive crossing, or (ii) $\sigma(K) = 2$. Suppose also that $\Sigma(K)$ is an $L$-space and
$$d(\Sigma(K),0) = -d(L(\det K,2),0).$$
Then if $X_K$ is a smooth, sharp, simply connected 4-manifold with rank $r$ negative-definite intersection form $Q_K$, and $X_K$ is bounded by $\Sigma(K)$, there exists an integral matrix $A$ such that $-AA^T=Q_K\oplus R_n$, and $A$ can be chosen such that the last two rows are $(x_{r+2}, \dots, x_3, 1, 0)$ and $(0,\dots,0, -1,1)$.  Furthermore the values $x_3,\dots,x_{r+2}$ are non-negative integers and obey the condition
\begin{equation}
x_3\leq 1,\,\,x_i\leq x_3+\dots+x_{i-1}+1\,\text{ for } 3<i<r+2,
\end{equation}
and the upper right $r \times r$ matrix of $A$ has determinant $\pm 1$.
\end{thm}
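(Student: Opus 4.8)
I begin by describing the strategy: run the Donaldson-theorem argument behind Corollary~\ref{diag}, but keep track of the extra rigidity the Heegaard Floer correction terms impose on the diagonalising matrix — this is Greene's ``changemaker'' strategy. First one sets up the lattice embedding. By the signed Montesinos theorem (Theorem~\ref{thm:montesinos}), in either case~(i) or case~(ii) one may arrange that $-\Sigma(K)$ bounds the negative-definite $W_K$ with intersection form $R_n$, where $\det K=2n-1$. Gluing $W_K$ to the hypothesised sharp, simply connected, negative-definite $X_K$ along $\Sigma(K)$ produces a closed, oriented, simply connected, smooth, negative-definite $X=X_K\cup_{\Sigma(K)}W_K$, so $Q_X\cong\langle-1\rangle^{\oplus(r+2)}$ by Donaldson. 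The Mayer--Vietoris sequence
$$0\to H_2(X_K)\oplus H_2(W_K)\to H_2(X)\to H_1(\Sigma(K))\to 0$$
realises $H_2(X_K)\oplus H_2(W_K)$ as a full-rank sublattice of $H_2(X)=\Z^{r+2}$ of index $\det K$, and writing the inclusion in an orthonormal basis gives the matrix $A$ with $-AA^t=Q_K\oplus R_n$. Since classes from $X_K$ and $W_K$ pair to zero in $X$, the images of the two plumbing spheres $g_1,g_2$ of $W_K$ (with $g_1^2=-n$, $g_2^2=-2$, $g_1\cdot g_2=1$) are orthogonal to the whole $Q_K$-block. The $(-2)$-class $g_2$ maps to some $\pm(e_i-e_j)$, so after a signed permutation of the basis its row is $(0,\dots,0,-1,1)$; then $g_1$, of square $-n$ and pairing to $1$ with it, maps to a vector whose last two coordinates are consecutive integers $(a,a-1)$. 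The content to be proved is that in fact $a=1$ and the remaining coordinates $x_3,\dots,x_{r+2}$ are non-negative and form a changemaker vector.

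The Floer hypotheses enter here, and this is the heart of the proof. Sharpness of $X_K$ says that for every $\mathfrak t\in\Spinc(\Sigma(K))$ there is a characteristic covector $c$ of $\langle-1\rangle^{\oplus(r+2)}$ restricting to $\mathfrak t$ with equality in~\eqref{eq:corterm} on $X_K$. On the other side, the linear plumbing bounding $L(\det K,2)$ carries the \emph{same} form $R_n$ and is sharp (being a lens-space plumbing), so the hypothesis $d(\Sigma(K),0)=-d(L(\det K,2),0)$ — together with the $L$-space assumption, which lets one bootstrap the remaining values — pins the $\Spinc$-labelled correction terms of $\Sigma(K)$ tightly to those of the lens space. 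The plan is to exploit this together with the additivity $c^2=c|_{X_K}^2+c|_{W_K}^2$ and the fact that $c^2\le-(r+2)$ with equality exactly for the all-$\pm1$ covectors: the correction-term equality forces $c|_{W_K}$ to be extremal whenever $c|_{X_K}$ is, which squeezes the relevant $c$'s onto the all-$\pm1$ vectors, and then tracking how these covectors must change as $\mathfrak t$ runs through $\Spinc(\Sigma(K))$ (consecutive ones differing by twice a lattice vector) forces $a=1$ and the inequalities $x_3\le 1$, $x_i\le x_3+\cdots+x_{i-1}+1$.

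Finally, with the last two rows normalised, $\langle g_1,g_2\rangle$ is primitive in $\Z^{r+2}$ (the $g_2$-row is primitive and the $g_1$-row has a coordinate equal to $1$), and $\phi(H_2(X_K))$ lies in its orthogonal complement $\Lambda^\perp$; the index-$\det K$ count from Mayer--Vietoris then forces $\phi(H_2(X_K))=\Lambda^\perp$ exactly, and expanding $\det A$ along the bottom two rows in this block form gives that the upper-right $r\times r$ minor is $\pm1$. I expect the hard part to be the Floer input of the middle paragraph — extracting the purely combinatorial changemaker conditions from the single $d$-invariant equality, the $L$-space assumption, and sharpness — which needs a careful $\Spinc$-by-$\Spinc$ comparison of the correction terms of $\Sigma(K)=S^3_{\pm\det K/2}(\kappa)$ with those of $L(\det K,2)$ (say through the rational surgery formula) and tight control over how the extremal characteristic covectors vary with the $\Spinc$ structure. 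The Montesinos/Donaldson setup and the two linear-algebra normalisations are routine by comparison.
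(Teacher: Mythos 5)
Two preliminary remarks: the paper itself does not prove Theorem \ref{greene} (it is quoted from \cite{GreeneBraid}), so your proposal has to stand as a proof of Greene's theorem on its own; and your opening and closing set-up is the same frame the paper uses around Corollary \ref{diag} and is essentially correct (signed Montesinos gives $W_K$ with form $R_n$ bounded by $-\Sigma(K)$ in both cases (i) and (ii), gluing to the sharp $X_K$ and applying Donaldson embeds $Q_K\oplus R_n$ into the diagonal lattice with index-$\det K$ image, and the $(-2)$-class can be normalised to $(0,\dots,0,-1,1)$). The genuine gap is that the entire content of the theorem --- that the $(-n)$-row can be taken to be $(x_{r+2},\dots,x_3,1,0)$ with the $x_i$ \emph{non-negative} and satisfying the changemaker inequalities --- is precisely what your middle paragraph defers: ``the plan is to exploit\dots'' describes a strategy but contains no argument. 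In Greene's proof this step requires (a) the full $\Spinc$-by-$\Spinc$ comparison of $d(\Sigma(K),\cdot)$ with $d(L(\det K,2),\cdot)$ coming from Ozsv\'ath--Szab\'o's half-integer surgery theorem (Theorem 4.1 of \cite{OSUnknot}), which is where $u(K)=1$ (via Montesinos), the $L$-space hypothesis and the single equality $d(\Sigma(K),0)=-d(L(\det K,2),0)$ are actually consumed; it is not the case that sharpness of $X_K$ plus one $d$-equality ``bootstraps'' the remaining values; (b) sharpness of the linear plumbing with form $R_n$ bounding $L(\det K,2)$, so that its correction terms are computed by extremal covectors of $R_n$; and (c) a genuinely combinatorial lemma: the inequalities $x_3\le 1$, $x_i\le x_3+\dots+x_{i-1}+1$ are equivalent to every integer in $[0,x_3+\dots+x_{r+2}]$ being a subset sum of the $x_i$, and it is this subset-sum realisability that the matched extremal covectors produce as $\mathfrak{t}$ ranges over $\Spinc(\Sigma(K))$. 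Your sketch never shows why extremality forces $a=1$ or non-negativity; ``consecutive covectors differ by twice a lattice vector'' is nowhere near sufficient, and you acknowledge as much, so as written this is an outline of the right strategy rather than a proof.

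The closing determinant claim is also unsupported by the argument you give: $\abs{\det A}=\det K$, not $1$, and a Laplace expansion of $\det A$ along the bottom two rows pairs the unimodular $2\times2$ block in the last two columns with the upper-\emph{left} $r\times r$ minor, while the minors drawn from the last $r$ columns enter only with coefficients $\pm x_i$ (the all-zero $2\times2$ block in columns $1,2$ of the last row kills the term involving the upper-right block altogether); so no ``$\pm1$'' statement about the upper-right block falls out of expanding $\det A$. Your identification $\phi(H_2(X_K))=\Lambda^{\perp}$ (primitivity of $\Lambda$ plus equality of discriminants) is fine, but passing from it to the stated minor condition is a separate piece of Greene's normal-form analysis that is missing here. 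In sum: correct peripheral steps matching the set-up the paper borrows from \cite{GreeneBraid}, but the decisive Floer-to-changemaker step and the final determinant condition are asserted rather than proved.
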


We have already shown that $X_K$ is simply connected and negative-definite, so what remains is to check that $\Sigma(K)$ is an $L$-space, and that $X_K$ is sharp. For the $L$-space condition, we refer to Section 3.1 of \cite{PretLSpace}, which immediately yields our result. To show that $X_K$ is sharp, we use Theorem 1.5 in \cite{OSPlumbed}. Since the negative-definite plumbing diagram has one ``overweight'' vertex (or ``bad'' in the sense of \cite{OSPlumbed}), it follows that $X_K$ is sharp.

The remaining condition on the correction terms is more difficult to check, and will require some more sophisticated knowledge of the Heegaard Floer homology of plumbed manifolds. Since this is material best presented in Section \ref{s:0b}, we ask the reader to suspend his or her disbelief until Lemma \ref{check}.

\subsection{The Proof of Theorem \ref{-2}}

We are now ready to prove Theorem \ref{-2}. To do this, we show that the $A$ described in Theorem \ref{greene} does not exist when $k \geq 3$. We begin by writing down $Q_K\oplus R_n$:

$$Q_K\oplus R_n= \left(
  \begin{array}{cccccccc|cc}
    -2         & 1   &   0        && \dots    &&&  0 &&\\
    1          & -2  &   1        &&              &&& &&  \\
     0         &  1  &  -2        &&  &&& &&  \\
                &       &               &&     \ddots           &&&   1&&\\
        \vdots &      &  &\ddots& \ddots  &&&&& \\
                    &       &            &&               &&1& &&   \\         
                   &       &            &&               &1&-2& 0 &&  \\  
         0       &      &               &1&           &&0&-k-2&&\\
         \hline
               &      &               &&           &&&&-n&1\\ 
               &      &               &&           &&&&1&-2\\ 
  \end{array}
\right).$$

Here the $(k+2m,k)$ and $(k,k+2m)$ entries are both $1$. It will be helpful to label the rows of $A$ as $v_1,\dots,v_{k+2m+2}$. Observe that $M_{i,j}=-(AA^T)_{i,j}=-v_i\cdot v_j$. Since $|v_i\cdot v_i|=2$ for $i\neq k+2m$ and $k+2m+1$, each row of $A$ (except rows $k+2m$ and $k+2m+1$) has two non-zero entries, each of magnitude 1. Without loss of generality set $v_1=(1,-1,0,\dots,0).$  Making this choice and applying the two row conditions from Theorem \ref{greene}, the remaining rows must take the following form (after permuting the columns of $A$):

$$A= \left(
  \begin{array}{cccccc|cc}
 1& -1 &  &  &  &  &  &   \\
  &  1 & -1&  &  &  &  &   \\ 
  &   &  \ddots & \ddots &  &  &  &   \\
  &   & &  &  &  &  &   \\ 
  &   &  &  &  &  &  &   \\ 
  &   & &  & 1 & -1 &  &   \\ 
 * &  * &\dots &  \dots&*  &*  & * & *  \\ 
   \hline
   *& *  &\dots & \dots & * & * & 1 &   \\ 
  &   &  &  &  &  &-1  &1   \\   
   
  \end{array}
\right).$$
This implicitly requires us to note that $k + 2m - 1 \geq 4$, and so the first $k + 2m - 1$ rows cannot have more than one non-zero entry in the same spot.

Next let $A_{k+2m+1,1}=\alpha$. Since $v_i\cdot v_{k+2m+1}=1$ for $i=1,2,\dots,k+2m-1$, each of the first $k+2m$ entries along the $(k+2m+1)$-th row all equal $\alpha$:

$$A= \left(
  \begin{array}{cccccc|cc}
 1& -1 &  &  &  &  &  &   \\
  &  1 & -1&  &  &  &  &   \\ 
  &   &  \ddots & \ddots &  &  &  &   \\
  &   & &  &  &  &  &   \\ 
  &   &  &  &  &  &  &   \\ 
  &   & &  & 1 & -1 &  &   \\ 
 * &  * &\dots &  \dots&*  &*  & * & *  \\ 
   \hline
   \alpha& \alpha  &\dots & \dots & \alpha & \alpha & 1 & 0  \\ 
  &   &  &  &  &  &-1  &1   \\   
  \end{array}
\right).$$

According to Theorem \ref{greene}, $\alpha=0$ or $1$.
\begin{enumerate}
\item If $\alpha=0$, $v^2_{k+2m+1}=n=\half(\det(K)+1)=1$, and therefore $\det(K)=1$.  One can use the Goeritz matrix to show that $\det P(k,-k-2,2m)=k^2+2k+4m$, so clearly $\alpha\neq0$.
\item If $\alpha=1$, then $v^2_{k+2m+1}=n=\half(\det K+1)=k+2m+1$.  This only happens if $k^2=1$, which contradicts our assumption that $k\geq 3$.
\end{enumerate}
The reader will note, as before, that knots of the form $P(1,-3,2m)$ have unknotting number one for all integral $m$. We have thus completed the first piece of our classification.

\section{The Case $p+q =0$: First Results}
\label{s:0a}

In this section, we tackle the knots $K = P(k,-k,2m)$. Our general method is as follows: we first pin down the value of $m$ using the Alexander module, finding that $m = 1$, then employ Heegaard Floer homology to deduce the value of $k$. One naturally wonders if the methods of the previous section will help us in this endeavour, but unfortunately the previous method only allows us to identify the sign of the crossing change involved.

Our ultimate goal over this section and the next is the following theorem.

\begin{thm}
\label{0}
Suppose that $k, m > 0$ and $k$ is odd. Then $P(k,-k,2m)$ has unknotting number one if and only if $k = 3$ and $m = 1$.
\end{thm}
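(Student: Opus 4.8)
The statement has two directions, and I would treat the (easy) reverse implication first. Changing the circled crossing in the standard diagram of $P(3,-3,2)$ depicted in the Introduction produces $P(3,-1,2)$, whose double branched cover is Seifert fibred over $S^{2}$ with at most two exceptional fibres and is therefore a lens space; since $\det P(3,-1,2)=|3\cdot(-1)+(-1)\cdot 2+2\cdot 3|=1$, that lens space is $S^{3}$, so $P(3,-1,2)$ is the unknot and $u(P(3,-3,2))=1$. The content of the theorem is the forward implication, which I would prove in the two stages announced above: pin down $m=1$ in the present section using the Alexander module, and then $k=3$ in Section~\ref{s:0b} using the Ozsv\'ath--Szab\'o correction terms.

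\emph{Stage one ($m=1$).} Assume $K=P(k,-k,2m)$ has $u(K)=1$. From the Goeritz matrix with $n=0$ one reads off $\det G(K)=-k^{2}$, hence $\det K=k^{2}$ and $\sigma(K)=0$ (Case~2). The key computation is the Alexander module $A(K)$ of the infinite cyclic cover, as a $\mathbb{Z}[t,t^{-1}]$-module, from a Seifert matrix of the standard spanning surface of $P(k,-k,2m)$; the parameter $m$ should occur in the presentation matrix as an off-diagonal coefficient. With $A(K)$ written down, I would apply Nakanishi's obstruction to unknotting number one \cite{nakanishi2} --- a refinement of the Montesinos--Lickorish linking-form condition that sees strictly more than $H_{1}(\Sigma(K))$ and its linking form (these coarser invariants do not even separate $m=1$ from $m=2$ here). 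Substituting the explicit module into that bound should force $m=1$, with $k$ as yet unconstrained.

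\emph{Stage two ($k=3$).} Now $K=P(k,-k,2)$, with $\det K=k^{2}$, $\sigma(K)=0$, and $\Sigma(K)$ an $L$-space (Section~3.1 of \cite{PretLSpace}) that bounds the negative-definite plumbing coming from its Seifert data, whose graph has a single bad vertex; hence the correction terms $d(\Sigma(K),\mathfrak{t})$ are computable algorithmically from the plumbing \cite{OSPlumbed}. On the other hand, the signed Montesinos theorem (Theorem~\ref{thm:montesinos}) gives $\Sigma(K)=S^{3}_{\pm k^{2}/2}(\kappa)$ for some $\kappa\subset S^{3}$, so the same correction terms are also governed by those of $L(k^{2},2)$ together with the nonincreasing sequence of nonnegative integers $V_{i}(\kappa)$, with consecutive differences $0$ or $1$, via the Ozsv\'ath--Szab\'o surgery formula \cite{OSUnknot}. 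Equating the two descriptions yields, for each $\mathfrak{t}$, a relation of the form $d(L(k^{2},2),\sigma(\mathfrak{t}))-d(\Sigma(K),\mathfrak{t})=2\max\{V_{\ast},V_{\ast}\}\geq 0$, where $\sigma$ is an a priori unknown affine identification of $\spinc(\Sigma(K))$ with $\mathbb{Z}/k^{2}$. As flagged in the Introduction I would proceed in two passes: first reduce this relation modulo $\mathbb{Z}$, i.e.\ work with the linking form alone, which forces $\sigma$ to preserve the linking form of $\Sigma(K)$ and so leaves only finitely many candidates for $\sigma$; then, for each surviving $\sigma$, impose the full integral obstruction, requiring the putative values $\tfrac12\bigl(d(L(k^{2},2),\sigma(\mathfrak{t}))-d(\Sigma(K),\mathfrak{t})\bigr)$ to assemble into a genuine $V$-sequence (nonnegative, nonincreasing, consecutive differences in $\{0,1\}$), and show this is consistent only when $k=3$.

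I expect Stage two, and within it the integral pass, to be the main obstacle: it needs a closed-form description of the plumbing $d$-invariants of $\Sigma(P(k,-k,2))$ as a function of $k$, an explicit enumeration of the linking-form-preserving labellings $\sigma$ (which is exactly what the mod-$\mathbb{Z}$ pass provides), and then a uniform argument eliminating every $k\geq 5$ by exhibiting, for each such $k$ and each candidate $\sigma$, a failure of monotonicity or of the consecutive-difference condition among the would-be $V_{i}$. Stage one should be comparatively routine once $A(K)$ is in hand; its one delicacy is to invoke Nakanishi's bound in a form sharp enough to kill every $m\geq 2$ at a single stroke.
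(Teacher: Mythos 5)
Your overall architecture coincides with the paper's: Stage one is exactly what is done in Section~\ref{s:0a} (a Seifert matrix for $P(k,-k,2m)$, the presentation $tV-V^t$ of the Alexander module, and Nakanishi's bound in the form of Lickorish's Theorem~7.10 applied to the second elementary ideal, which is $\left<\mathcal{P}_k(t),m(t-1)\right>$ and is proper precisely when $m\geq 2$). Your description of Nakanishi's criterion as a linking-form refinement is not quite right --- it is an elementary-ideal condition on the Alexander module --- but the computational plan is the paper's, and your reverse implication ($P(3,-1,2)$ has double branched cover $S^3$, hence is the unknot) is fine.

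Stage two, however, has two genuine gaps. First, you write $\Sigma(K)=S^3_{\pm k^2/2}(\kappa)$ and then immediately assert a relation $d(L(k^2,2),\sigma(\mathfrak{t}))-d(\Sigma(K),\mathfrak{t})=2\max\{V_*,V_*\}\geq 0$, which silently fixes the sign of the half-integral surgery. Since $\sigma(K)=0$ here, the signed Montesinos theorem (Theorem~\ref{thm:montesinos}) does \emph{not} determine that sign; the paper must prove separately (Lemma~\ref{sign}) that the unknotting crossing is negative, and that proof uses an entirely different tool --- Greene's refinement of Donaldson diagonalisation applied to $\overline{K}$, together with the L-space, sharpness and $d(\Sigma,0)=d(L,0)=0$ verifications (Lemma~\ref{check}). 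Your outline neither establishes the sign nor runs the obstruction for both signs, so as written the displayed inequality is unjustified. Second, the obstruction you propose is essentially Theorem~1.1 of \cite{OSUnknot} (positivity plus the $V$-sequence structure of the surgery formula), and the paper explicitly observes that the positive/even matching conditions are \emph{not} strong enough for these pretzels: for $k=5$ there is a labelling producing a positive, even matching, and what fails is only the \emph{symmetry} of the differences, i.e.\ Theorem~\ref{OSunknot}. Your plan can only succeed if the ``max'' indices in the half-integer surgery formula are pinned down precisely enough to encode that symmetry; with the indices left as ``$V_*,V_*$'' and the identification $\sigma$ constrained only by the linking form (your mod-$\mathbb{Z}$ pass, which matches Proposition~\ref{congruence}), the integral pass risks being exactly the insufficient obstruction. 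That integral elimination of all $k\geq 5$ --- which in the paper occupies the whole of Section~\ref{s:0b}, via the explicit maximisers $K^1_{i,j},K^2_i,K^3_j$, the congruence $\ell^2\equiv 6k+4 \bmod k^2$, and the case-by-case analysis of $Z(0)=Z(r)=0$ --- is precisely the part you defer, so the proposal as it stands does not yet contain the argument that makes the theorem true.
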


The determinant of $P(k,-k,2m)$ is always $k^2$. Hence, in Theorem \ref{thm:montesinos}, we can always take $D = k^2$, and so $n = \tfrac{k^2+1}{2}$.

\subsection{The Alexander Module}

Recall that we can construct the infinite cyclic cover $X_\infty$ of a knot. This has a deck transformation group $\mathbb{Z}$, generated by some element $t$. Then $H_1(X_\infty;\mathbb{Z})$ is a $\mathbb{Z}[t,t^{-1}]$-module $A$, called the \emph{Alexander module}, from which much topological information can be extracted. This is done via the $r^\text{th}$ \emph{elementary ideal}, denoted $A_r$, which is the ideal of $\mathbb{Z}[t,t^{-1}]$ spanned by the $(n - r + 1)\times(n-r+1)$-minors of any $n\times n$ presentation matrix for $A$.

From Nakanishi \cite{NakanI}, in the form cited in Lickorish \cite{Lickorish}, we know that the Alexander module can bound the unknotting number. For our purposes, we present the following definition-theorem (see Theorem 7.10 of Lickorish \cite{Lickorish}).

\begin{thm}[Unknotting via Alexander module]
\label{Alexander}
Suppose that $V$, an $n \times n$ matrix, is a Seifert matrix for $K$ in $S^3$. Then the Alexander module of $K$ is presented by the matrix $A = tV - V^t$. Moreover, if $\mathbb{Z}[t,t^{-1}]/A_r \neq 0$, it follows that $u(K) \geq r$.
\end{thm}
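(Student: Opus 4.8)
The plan is to treat the statement's two halves separately. The identification of $tV - V^{t}$ as a presentation matrix for the Alexander module is the classical computation of $H_1(X_\infty)$ from a Seifert surface, and I would merely recall it: if $F$ is a genus-$h$ Seifert surface for $K$ and $Y$ is the complement of an open tubular neighbourhood of $F$ in $S^3$, then $X_\infty = \bigcup_{i\in\mathbb{Z}} t^{i}Y$ with $t^{i}F_{+}$ glued to $t^{i+1}F_{-}$; the Mayer--Vietoris sequence of this decomposition, together with the fact that under the Alexander-duality isomorphism $H_1(Y)\cong H_1(F)^{*}$ the two push-off inclusions $F\hookrightarrow Y$ are recorded by the matrices $V$ and $V^{t}$, presents $H_1(X_\infty)$ as $\mathbb{Z}[t,t^{-1}]^{2h}$ modulo the columns of $tV-V^{t}$ (connectedness of $F$ handling the $H_0$ terms). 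This part is formal and I would not dwell on it.

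For the inequality $u(K)\ge r$, the first step is to reinterpret the hypothesis in terms of generators. Write $\nu(M)$ for the minimal number of generators of a finitely generated $\mathbb{Z}[t,t^{-1}]$-module $M$. The elementary ideals are presentation-independent, and a module with a presentation on $g$ generators has $A_{r}=(1)$ for every $r>g$ (then $A_r$ is spanned by minors of non-positive size). Hence $\mathbb{Z}[t,t^{-1}]/A_{r}\neq 0$, i.e.\ $A_{r}\neq(1)$, forces the Alexander module $H_1(X_\infty)$ to need at least $r$ generators, that is $\nu(H_1(X_\infty))\ge r$. Thus the theorem reduces to Nakanishi's bound $\nu(H_1(X_\infty))\le u(K)$.

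I would prove this bound (Nakanishi's argument \cite{NakanI}, as presented in \cite{Lickorish}) by induction on $u=u(K)$ along an unknotting sequence $K=K_{0}\to K_{1}\to\cdots\to K_{u}=U$, the base case being trivial since the Alexander module of the unknot vanishes. The inductive step rests on a \emph{Key Lemma}: if $K'$ is obtained from $K$ by a single crossing change then $\nu(A(K))\le\nu(A(K'))+1$, where $A(\cdot)$ denotes the Alexander module. Granting this, $\nu(A(K_{0}))\le\nu(A(K_{1}))+1\le\cdots\le\nu(A(K_{u}))+u=u$, which with the previous paragraph finishes the proof. The Key Lemma has a geometric and an algebraic part. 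Geometrically, a crossing change is $\pm1$-surgery on a small unknot bounding a disc that meets $K$ transversally in two points of opposite sign; stabilising a Seifert surface for $K$ by a tube running along that disc produces Seifert surfaces $F,F'$ for $K,K'$ of equal genus whose Seifert matrices satisfy $V-V'=\pm E_{jj}$, a single $\pm1$ in one diagonal slot (the normal form that also yields $\abs{\sigma(K)-\sigma(K')}\le2$). By the first half of the theorem, $tV-V^{t}$ and $tV'-V'^{t}$ then present $A(K)$ and $A(K')$ and differ in exactly one entry, by $\pm(t-1)$. Algebraically, if two finitely generated $\mathbb{Z}[t,t^{-1}]$-modules admit square presentation matrices of the same size differing in a single entry, their minimal numbers of generators differ by at most $1$: deleting the relation that carries the differing entry yields a module $M''$ surjecting onto both, with cyclic kernel in each case, so that $\nu(M)\le\nu(M'')\le\nu(M')+1$ (an extension of a module needing $\le a$ generators by a cyclic module needs $\le a+1$ generators), and symmetrically.

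I expect the real obstacle to be the geometric half of the Key Lemma: arranging the crossing change, after stabilisation, so that it acts as a single Dehn twist along the co-core of the new tube and hence alters the Seifert form in exactly one diagonal entry. It is essential that the change be of the form $\pm E_{jj}$ rather than a general rank-one or two-entry perturbation; otherwise the induction would only deliver the far weaker $\nu\le 2u(K)$. Everything downstream --- the Mayer--Vietoris computation, the elementary-ideal bookkeeping (including the routine observation that the Alexander module always has a square presentation, so the paper's $n\times n$ definition of $A_{r}$ applies), and the module comparison --- is straightforward once this normal form is in hand.
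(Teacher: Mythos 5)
Your argument is essentially correct, but note that the paper itself offers no proof of this statement: it is quoted as Theorem 7.10 of Lickorish, which packages Nakanishi's bound, and the cited proof runs along a genuinely different line from yours. There, the $u=u(K)$ crossing changes are realised simultaneously as $\pm1$-surgeries on $u$ disjoint unknots in the knot exterior, each bounding a disc meeting $K$ twice with zero linking number; lifting these curves to the infinite cyclic covers and applying a Mayer--Vietoris/equivariant surgery argument shows at one stroke that $H_1(X_\infty)$ is generated over $\mathbb{Z}[t,t^{-1}]$ by the $u$ orbits of lifted meridians, whence $A_r=(1)$ for $r>u$ by exactly the Fitting-ideal bookkeeping you describe. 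Your route instead inducts one crossing change at a time, and its only delicate point is the one you flag: the normal form $V-V'=\pm E_{jj}$. This is classical and true, and the cleanest way to see it is the skein/band construction (the same one behind $\abs{\sigma(K)-\sigma(K')}\le 2$): take any Seifert surface $F_0$ for the oriented smoothing $K_0$ inside the crossing ball and attach to it a band with one or the other half-twist to recover $K$ and $K'$; the two surfaces share a homology basis with a single new generator running once over the band, and only that generator's self-linking changes by $1$. (If one works instead with a Seifert-algorithm surface, several basis loops may traverse the changed band and one gets a priori only a rank-one perturbation; this is repaired by choosing a cycle basis in which the changed band lies on a unique basis cycle.) With that in hand, your module comparison --- $M$ and $M'$ both quotients of the one-relation-deleted module $M''$ by cyclic submodules --- and the trivial base case give the Nakanishi index bound $\nu(H_1(X_\infty))\le u(K)$, which is the same strengthened statement the cited proof establishes. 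In short: both proofs work; yours is more elementary in that it never leaves the world of Seifert matrices, while the cited argument avoids the normal-form lemma and obtains the generator bound in a single geometric step.
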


Using this, we can now prove the following lemma.

\begin{lem}
Suppose $k \geq 3$, $m > 0$, and $k$ is odd. Then if $P(k,-k,2m)$ has unknotting number one, $m = 1$.
\end{lem}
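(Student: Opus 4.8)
The plan is to apply Theorem~\ref{Alexander} to a Seifert matrix for $K = P(k,-k,2m)$ and show that, for $m \geq 2$, the first elementary ideal $A_1$ is a proper ideal of $\mathbb{Z}[t,t^{-1}]$, so that $\mathbb{Z}[t,t^{-1}]/A_1 \neq 0$ and hence $u(K) \geq 2$. Concretely, I would first write down an explicit Seifert surface for the pretzel knot from its standard diagram: with $p,q$ odd and $r = 2m$ even, the obvious genus-one-style plumbing does not directly work, so one takes the Seifert surface obtained by Seifert's algorithm on the standard alternating-ish diagram, which for $P(p,q,r)$ with one even parameter has first Betti number $2$ when $r$ is even (two bands), giving a $2 \times 2$ Seifert matrix $V$. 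I would compute $V$ for $P(k,-k,2m)$ explicitly in terms of $k$ and $m$; the Alexander polynomial $\det(tV - V^t)$ should come out (up to units) to something like $mk^2 t - (2mk^2 - \text{something})\,\ldots$ — in any case a degree-one polynomial whose value at $t=1$ has absolute value $k^2 = \det K$, consistent with the determinant computed earlier.

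Next, since the presentation matrix $A = tV - V^t$ is $2\times 2$, the first elementary ideal $A_1$ is generated by the $1\times 1$ minors of $A$, i.e.\ by the four entries of $A$ themselves (as Laurent polynomials). The key point is then purely algebraic: I would show that for $m \geq 2$ these four entries have a nontrivial common factor in $\mathbb{Z}[t,t^{-1}]$, or more precisely that the ideal they generate is contained in a proper ideal — for instance by exhibiting a prime $\mathfrak{p}$ (a prime number dividing some gcd, or the augmentation-type ideal) that contains all entries of $A$. The entries of $tV - V^t$ for our surface will be of the form $a t - a$, $bt - c$, $ct - b$, $dt - d$ type expressions; the diagonal entries $a(t-1)$ and $d(t-1)$ already share the factor $(t-1)$, and the crux is to see that when $m$ is even-divisible the off-diagonal entries also lie in a common proper ideal. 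I expect the cleanest route is to reduce modulo a suitable prime $\rho$ dividing $m$ (or $2m$): if every entry of $A$ is divisible by $\rho$ in $\mathbb{Z}[t,t^{-1}]$ then $A_1 \subseteq (\rho) \subsetneq \mathbb{Z}[t,t^{-1}]$, forcing $u(K)\geq 2$; and conversely for $m=1$ one checks the entries generate the unit ideal, so the obstruction vanishes, consistent with $P(k,-k,2)$ potentially having $u=1$.

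The main obstacle I anticipate is getting the Seifert matrix genuinely correct and in a usable normal form — the even parameter $r = 2m$ means the standard genus-one pretzel Seifert matrix formulas do not apply verbatim, and one must be careful about orientations, the number of Seifert circles, and hence the exact size and entries of $V$. Once $V$ is pinned down, the rest is a short gcd/ideal computation. A secondary subtlety is making sure the relevant elementary ideal is the \emph{first} one ($r=1$ in Theorem~\ref{Alexander}) and that $\mathbb{Z}[t,t^{-1}]/A_1 \neq 0$ is really equivalent to $A_1$ being a proper ideal; this is standard but worth stating carefully, since $A_0 = (\Delta_K(t))$ is always proper (the Alexander polynomial is never a unit for a nontrivial knot) and would give only $u(K) \geq 0$, which is vacuous — so the whole argument lives or dies on the \emph{first} elementary ideal being proper exactly when $m \geq 2$.
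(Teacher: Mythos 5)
Your overall strategy (Nakanishi's bound via elementary ideals of the Alexander module) is the same as the paper's, but the proof as proposed fails at its foundation: $P(k,-k,2m)$ does not admit a Seifert surface of first Betti number $2$, so there is no $2\times 2$ Seifert matrix to compute. The two-disks-and-three-bands pretzel surface you have in mind is orientable only when all three parameters are odd; with $r=2m$ even it is non-orientable, and Seifert's algorithm on the standard diagram produces a much larger surface. Indeed the Alexander polynomial already rules your premise out: $P(3,-3,2)$ is $8_{20}$ with $\Delta(t)\doteq(t^2-t+1)^2$ of span $4$, and in general the span grows with $k$, so any Seifert matrix for $k\geq 3$ has size at least $4\times 4$. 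The paper instead writes down an explicit Seifert surface with $2k$ generating curves, obtains a $2k\times 2k$ matrix $V$, and computes that the relevant elementary ideal of $tV-V^t$ is $\left<\mathcal{P}_k(t),\,m(t-1)\right>$ with $\mathcal{P}_k(t)=\sum_{i=0}^{k-1}(-1)^it^{k-1-i}$; identifying those two generators (and showing nothing smaller is missed) is the real computational content, and it does not reduce to a $2\times 2$ determinant exercise.

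There are two further problems even if you had a correct presentation matrix. First, with the paper's indexing in Theorem \ref{Alexander} ($A_r$ is generated by the $(n-r+1)\times(n-r+1)$ minors), the conclusion $u(K)\geq 2$ requires $\mathbb{Z}[t,t^{-1}]/A_2\neq 0$, i.e.\ the ideal of $(n-1)\times(n-1)$ minors must be proper; invoking the theorem with $r=1$, as you state, only gives $u(K)\geq 1$. What you actually need is the statement that a non-cyclic Alexander module forces $u\geq 2$, which is the $r=2$ case. Second, your proposed mechanism for properness --- a prime dividing $m$ (or the factor $t-1$) dividing every generator --- cannot work for these knots: the ideal contains $\mathcal{P}_k(t)$, whose coefficients are $\pm 1$ (so no rational prime contains the ideal) and which satisfies $\mathcal{P}_k(1)=1$ for $k$ odd (so $(t-1)$ does not contain it either). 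The paper's argument is necessarily subtler: it passes to the quotient $\mathbb{Z}[t,t^{-1}]/\left<\mathcal{P}_k(t)\right>$, a free $\mathbb{Z}$-module of rank $k-1$ up to units, and shows that the image of $\left<m(t-1)\right>$ is everything precisely when $m=1$; this is also what shows the obstruction vanishes for $m=1$, matching $P(3,-3,2)$.
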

\begin{proof}
We take the following Seifert surface for our pretzels, $P(k,-k,2m)$. The curves are indexed starting with the leftmost column of loops, smallest to largest, followed by the same labelling in the next column. For the last two curves, we take the big loop around the hole, then the loop crossing the ``bridge''. As regards orientations, the different shadings represent differences in orientation.
\begin{center}
\includegraphics[width=0.6\textwidth]{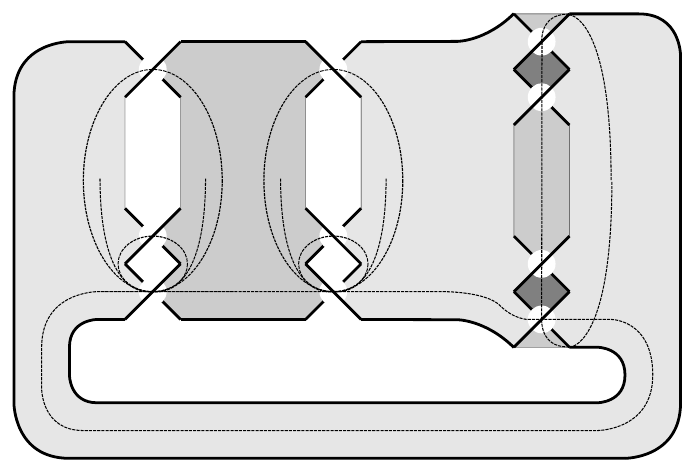}
\end{center}
From this, we construct a Seifert matrix for $P(k,-k,2m)$ of the form
$$V = \begin{pmatrix}X_k&0&0&0\\0&-X_k&0&0\\\mathbf{1}&-\mathbf{1}&0&0\\0&0&1&m\end{pmatrix},$$
where $X_k$ is the $(k-1) \times (k-1)$ lower triangular matrix of 1's, and $\mathbf{1}$ is a suitably sized row of 1's.

Consequently, the Alexander module is presented by
$$A = \begin{pmatrix}M_k&0&-\mathbf{1}^t&0\\0&-M_k&\mathbf{1}^t&0\\\mathbf{t}&-\mathbf{t}&0&-1\\0&0&t&m(t-1)\end{pmatrix},$$
from which we can compute the relevant minors. Here, $M_k = tX_k - X_k^t$, and $\mathbf{t}$ is a row with all entries $t$.

We claim, that for $k \geq 3$, the second elementary ideal, $A_2$, generated by these minors (in $\mathbb{Z}[t,t^{-1}]$), is precisely given by
$$A_2 = \left<\sum_{i=0}^{k-1}(-1)^it^{k-1-i}, m(t-1) \right>.$$
For the moment, we assume this, and call the first polynomial $\mathcal{P}_k(t)$. Then we can show that $A_2 = \mathbb{Z}[t,t^{-1}]$ if and only if $m = 1$, since $k$ is odd. Indeed, the quotient $\mathbb{Z}[t,t^{-1}]/\left<\mathcal{P}_k(t)\right>$ is the $\mathbb{Z}$-module consisting of all integral Laurent polynomials with the form
$$a_{k-2}t^{k-2} + a_{k-3}t^{k-3} + \dots + a_1t+a_0,$$
together with their unit multiples (that is, multiples of $t^n$ for $n$ an integer). These are forced to be zero in $A/A_2$ if and only if they fall in the ideal $\left<m(t-1)\right>$. In particular, we require all $a_i$ to be divisible by $m$. This statement then implies $m = 1$.

When $m = 1$, observe that $\mathcal{P}_k(t)$ is in fact, for $k$ odd,
$$\mathcal{P}_k(t) = t^{k-1} - t^{k-3}(t-1) - t^{k-5}(t-1) - \dots - (t-1),$$
which means that in the quotient $\mathbb{Z}[t,t^{-1}]/\left<m(t-1)\right>$, the polynomial is a unit, since $\mathcal{P}_k(t) \equiv t^{k-1}$, whence $\mathbb{Z}[t,t^{-1}]/A_2 = 0$. Hence there is no obstruction to unknotting number one, since the theorem guarantees only that $u(K) \geq 1$.

What remains then is to check our claim. As a first step, we can compute the determinant of $M_k$, which goes as follows. Here, for a row vector $\mathbf{v}$, we use the notation $\mathbf{v}^*$ to indicate a square matrix with each row $\mathbf{v}$.
\begin{align*}
\det M_k &= \det\begin{pmatrix}
t-1 &-1 &-1&\dots &-1 \\
t &t-1 &-1& \dots &-1 \\
t &t & t-1 &\dots &-1 \\
\vdots &\vdots &\vdots&\ddots &\vdots \\
t & t &t &\dots &t-1
\end{pmatrix}\\
&= t^{-1}\det \begin{pmatrix}
t^2 &0 &0&\dots &-1 \\
t &t-1 &-1& \dots &-1 \\
t &t & t-1 &\dots &-1 \\
\vdots &\vdots &\vdots&\ddots &\vdots \\
t & t &t &\dots &t-1
\end{pmatrix}\\
&= t\det M_{k-1} +(-1)^{k-1}t^{-1}\det \begin{pmatrix}\mathbf{t}^t &M_{k-2}\\ t & \mathbf{t}\end{pmatrix}\\
&=t\det M_{k-1} + (-1)^{k-1}\det\begin{pmatrix}0 & M_{k-2}-\mathbf{t}^* \\ 1 &\mathbf{1} \end{pmatrix}\\
&=t\det M_{k-1} + \det(M_{k-2}-\mathbf{t}^*)\\
&= t\det M_{k-1} + (-1)^{k-1}.
\end{align*}

From this recurrence we can see that $\det M_k = \mathcal{P}_k(t)$. Now it is not hard to see that for any other minor, supposing that the $m(t-1)$ entry remains, we can expand down the final column or row. This yields two terms, one that contains $m(t-1)$ as a factor, and the other of which is the determinant of a block diagonal matrix, one factor of which is either $\det(M_k)$ or $\det(-M_k)$, both of which are $\mathcal{P}_k(t)$ up to sign. The remaining case, when $m(t-1)$ is removed, is a calculation very much like that following this paragraph, and therefore has $\mathcal{P}_k(t)$ as a factor. What remains to be done, then, in order to prove that $A_2$ is spanned by these two key polynomials is to ensure that they are both actually in the ideal.\ This is proved by the following two example minors.

First, we delete the first row and final column:
\begin{align*}\det A^{1,2k} &= \det\begin{pmatrix}
\mathbf{t}^t&M_{k-1}&0&-\mathbf{1}^t\\0&0&-M_k&\mathbf{1}^t\\t&\mathbf{t}&-\mathbf{t}&0\\0&0&0&t
\end{pmatrix}\\
&=t \det \begin{pmatrix}\mathbf{t}^t &M_{k-1} &0\\0 & 0 &-M_k \\t &\mathbf{t} &-\mathbf{t}\end{pmatrix}\\
&=-t^2\det \begin{pmatrix}M_{k-1}-\mathbf{t}^* &\mathbf{t}^*\\0 &-M_k \end{pmatrix}\\
&=(-1)^kt^2\det(M_{k-1}-\mathbf{t}^*)\det M_k\\
&=t^2\mathcal{P}_k(t).
\end{align*}

The last equality uses our previous calculation, and the fact that $M_{k-1} - \mathbf{t}^*$ is an upper-triangular matrix with all its $(k-2)$ diagonal entries being $-1$. Since $t^2$ is a unit, we know that $\mathcal{P}_k(t)$ is in $A_2$. We now check that $m(t-1)$ is too, as evidenced by the following minor.
\begin{align*}
\det A^{1,k}
&= \det \begin{pmatrix}
\mathbf{t}^t&M_{k-1}&0&-\mathbf{1}^t&0\\
0&0&\mathbf{1}&1 &0 \\
0&0&-M_{k-1}&\mathbf{1}^t&0 \\
t&\mathbf{t}&-\mathbf{t}&0&-1 \\
0&0&0&t&m(t-1)
\end{pmatrix}\\
&= m(t-1)\det\begin{pmatrix}
\mathbf{t}^t&M_{k-1}&0&-\mathbf{1}^t\\
0&0&\mathbf{1}&1\\
0&0&-M_{k-1}&\mathbf{1}^t\\
t&\mathbf{t}&-\mathbf{t}&0
\end{pmatrix}\\
&=m(t-1)\det\begin{pmatrix}
0&M_{k-1}-\mathbf{t}^*&\mathbf{t}^*&-\mathbf{1}^t\\
0&0&\mathbf{1}&1\\
0&0&-M_{k-1}&\mathbf{1}^t\\
t&\mathbf{t}&-\mathbf{t}&0
\end{pmatrix}.
\end{align*}

The last matrix determinant is then manipulated as
$$-t\det(M_{k-1}-\mathbf{t}^*)\det\begin{pmatrix}\mathbf{1} &1\\-M_{k-1} & \mathbf{1}^t\end{pmatrix}=(-1)^{k-1}\det\begin{pmatrix}M_{k-1} & -\mathbf{1}^t \\ \mathbf{t} &t\end{pmatrix},$$
and this in turn is almost $M_k$. The RHS is in fact, up to sign,
$$\det M_k - (t-1)\det M_{k-1} + t \det M_{k-1} = \det M_k + \det M_{k-1} = t^{k-1}.$$
It follows that $m(t-1)$ is in $A_2$, at last completing our proof.
\end{proof}

\subsection{Donaldson Diagonalisation and $\Sigma(k,-k,2)$}
\label{diagdonzero}

As foreshadowed, we can try to mimic the work in Section \ref{s:-2}. However, since in this case the signature of $K$ vanishes, the only progress we can make here is to pin down the sign of the unknotting crossing. This is due to problems gluing the pieces of our closed manifold $X$ since the orientations must be compatible. This information, however, will be relevant in the next section on the Heegaard Floer homology obstruction.

\begin{lem}
\label{sign}
Suppose $k \geq 3$ is odd. Then if $K = P(k,-k,2)$ has unknotting number one, it is undone by changing a negative crossing.
\end{lem}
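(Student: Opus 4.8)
The plan is to run the ``signed Montesinos plus Donaldson'' machine of Section~\ref{s:-2}, but with both orientations of the crossing change in play, and show that only the negative one survives. Concretely, suppose $K = P(k,-k,2)$ has unknotting number one. By the (un-signed) Montesinos theorem, $\Sigma(K) = S^3_{\pm D/2}(\kappa)$ with $D = \det K = k^2$; the sign here records exactly whether the unknotting crossing is negative or positive. If the crossing is \emph{positive}, then by the signed Montesinos theorem (Theorem~\ref{thm:montesinos}), $-\Sigma(K)$ in the \emph{opposite} orientation --- equivalently $\Sigma(K)$ itself --- bounds a smooth, simply connected, negative-definite $4$-manifold $W_K$ with intersection form $R_n$, $D = 2n-1$, so $n = \tfrac{k^2+1}{2}$. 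First I would record that, with $m = 1$, the plumbing $X_K$ of Figure~\ref{fig:goodplumbing} (with the single ``bad'' vertex) still has negative-definite intersection form $Q_K$ with $\det Q_K = \det G(K) = -k^2 < 0$, exactly as in Section~\ref{plumbings} --- nothing in that computation used $p+q=-2$.

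Next I would glue. In the positive-crossing case we would need to glue $X_K$ (bounding $\Sigma(K)$, negative-definite) to $W_K$ (also bounding $\Sigma(K)$, negative-definite) --- but the orientations on the common boundary are \emph{the same}, not opposite, so this is not a legitimate gluing to a closed manifold: to form a closed $X$ one needs one piece to bound $\Sigma(K)$ and the other to bound $-\Sigma(K)$. This is precisely the orientation incompatibility flagged in the paragraph preceding the lemma. In the negative-crossing case, instead, $-\Sigma(K)$ bounds the negative-definite $W_K$ while $X_K$ still bounds $\Sigma(K)$, so $X = X_K \cup_{\Sigma(K)} W_K$ is a genuine closed, simply connected, negative-definite $4$-manifold, and Donaldson/Corollary~\ref{diag} applies. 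So the asymmetry between the two signs is built into whether the Mayer--Vietoris gluing even makes sense. To turn ``the gluing fails'' into ``the positive crossing is impossible,'' I would argue with the \emph{other} orientation: if the crossing is positive then $-\Sigma(K) = S^3_{+k^2/2}(\overline\kappa)$ bounds the negative-definite $R_n$ piece, so $\Sigma(K)$ bounds a \emph{positive}-definite manifold $-W_K$; meanwhile $-X_K$ is a positive-definite filling of $-\Sigma(K)$. Gluing $-X_K$ to $W_K$ along $-\Sigma(K)$ (orientations now opposite) gives a closed, simply connected, \emph{positive}-definite $4$-manifold, so $(-Q_K)\oplus R_n$ --- equivalently $Q_K \oplus (-R_n)$ after a sign --- must be diagonalisable, i.e.\ there is an integral $A$ with $AA^t = Q_K \oplus R_n$ (note: $+$, not $-$). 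But $Q_K \oplus R_n$ is negative-definite, so $AA^t$ negative-definite forces $A = 0$, an immediate contradiction for $r \geq 1$. Hence the crossing must be negative.

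The main obstacle I anticipate is not the linear algebra --- which is essentially trivial once the orientations are tracked --- but making the orientation bookkeeping airtight: one must be careful about the conventions in Theorem~\ref{thm:montesinos} (which is stated for a \emph{negative} crossing change and $\sigma(K) = 0,2$), about the fact that $\sigma(K) = 0$ here so ``$\epsilon = (-1)^{\sigma(K)/2} = 1$'' and the definiteness of the Montesinos filling could a priori go either way, and about the sign of $\det Q_K$ relative to the sign of $\det R_n$ entering the Mayer--Vietoris embedding. The cleanest route is probably to phrase everything in terms of: a positive crossing change on $K$ would exhibit $\Sigma(K)$ as bounding \emph{both} a negative-definite filling (the plumbing $X_K$) and a positive-definite filling (coming from $-W_K$), and to note that the closed manifold obtained by capping these off must be both, hence has trivial $b_2$ --- contradicting $b_2(X_K) = r \geq 1$. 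I would double-check this against the known fact that $P(3,-3,2)$, which does have $u = 1$, is undone by a negative crossing, so the conclusion of the lemma is consistent with the surviving case.
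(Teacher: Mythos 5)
Your proposal has the orientations backwards, and the contradiction you extract from them is a sign error rather than a proof. Run the signed Montesinos theorem carefully: since $\sigma(K)=0$ we have $\epsilon=+1$, so if $K$ is undone by a \emph{negative} crossing then $-\Sigma(K)$ bounds a \emph{positive}-definite piece, and no closed negative-definite manifold can be formed with the negative-definite plumbing $X_K$ (this is exactly why the method cannot rule out the negative crossing, consistent with $P(3,-3,2)$). It is the \emph{positive}-crossing case (apply the theorem to $\overline K$) in which $-\Sigma(K)$ bounds a negative-definite $W$ with form $R_n$, the gluing $X=X_K\cup_{\Sigma(K)}W$ is legitimate, and one hopes for a contradiction. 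Your attempt to get that contradiction fails twice over. First, reversing the orientation of the closed manifold negates \emph{both} summands, so Donaldson gives an integral $A$ with $-AA^t=Q_K\oplus R_n$ (equivalently $AA^t=-(Q_K\oplus R_n)$, which is positive definite); there is no version with $AA^t=Q_K\oplus R_n$, and hence no "$A=0$" contradiction. Worse, as the paper notes at the end of Section~\ref{plumbings}, an $A$ satisfying $-AA^t=Q_K\oplus R_n$ \emph{always} exists for this plumbing, so Donaldson alone obstructs nothing. Second, your fallback claim that a $3$-manifold bounding both a positive-definite and a negative-definite filling forces $b_2=0$ of the glued-up manifold is false: gluing the two fillings along oppositely oriented copies of the boundary generally produces an indefinite closed manifold (e.g.\ $S^3$ bounds punctured $\mathbb{CP}^2$ and punctured $\overline{\mathbb{CP}}^2$, glued to give $\mathbb{CP}^2\#\overline{\mathbb{CP}}^2$).

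The missing ingredient is Greene's refinement, Theorem~\ref{greene}, which is the actual engine of the paper's proof. Under the positive-crossing assumption (case (i) of that theorem, $\sigma=0$), after checking that $\Sigma(K)$ is an $L$-space, that the plumbing $X_K$ is sharp (one bad vertex), and that $d(\Sigma(K),0)=d(L(k^2,2),0)=0$ (Lemma~\ref{check}), one gets not just the existence of $A$ but the normal form for its last two rows. Writing those rows as $(a,\dots,a,b,b,c,c)$ and $(d,\dots,d,1,0)$ and using $v_k\cdot v_{k+2}=-1$ and $v_{k+2}\cdot v_{k+2}=k$ yields $ka^2+2(a-1)^2+2c^2=k$, whose case analysis ($a=0$ fails by parity, $a=1$ forces $n=1$, i.e.\ $k^2=1$) contradicts $k\geq 3$. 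Without this structured-matrix step your argument does not close, so as written the proof has a genuine gap.
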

\begin{proof}
Suppose that $K$ is undone with a positive crossing. Then, $\overline{K}$ is undone by changing a negative crossing. Hence, in the signed Montesinos theorem (Theorem \ref{thm:montesinos}), $\Sigma(\overline{K}) = S^3_{-D/2}(\kappa)$, where $\kappa$ is a knot in $S^3$ and $D = \det \overline{K} = \det K$, and so $-\Sigma(K)$ bounds a negative-definite 4-manifold with intersection form $R_n$.

The plumbing in Figure \ref{fig:goodplumbing} is negative-definite, proved exactly analogously to the case treated in Section \ref{plumbings}. As before, $\Sigma(K)$ is an $L$-space via Section 3.1 of \cite{PretLSpace}. The fact that $d(\Sigma,0) = d(L,0) = 0$ is, also as before, delayed until Lemma \ref{check}.

Knowing that Theorem \ref{greene} is applicable, we glue these two 4-manifolds together, as before, and the matrix $A$ should appear as
$$\left( \begin{array}{ccccccc|cc}
1 &-1 & & & & & & &\\
 &1 &-1 & & & & & &\\
 & &1 &-1 & & & & &\\
 & & &\ddots &\ddots & & & &\\
 & & & &1 &-1 & & &\\
 & & & & &1 &-1 & &\\
a &a &a &\dots &a &b &b &c &c\\
\hline
d &d &d &\dots &d &d &d &1 &\\
 & & & & & & &-1 &1
\end{array} \right).$$
Denote the rows by $v_i$, with a total of $k+4$ rows. Then $v_k\cdot v_{k+2} = -1$, so $b = a-1$. Then $v_{k+2}\cdot v_{k+2} = k$ implies
\begin{equation}
\label{epsone}
ka^2 + 2(a-1)^2 + 2c^2 = k
\end{equation}
whence we must have $a = 0,1$ (else the LHS is too big). We split the cases:
\begin{enumerate}
\item If $a = 0$, then from \eqref{epsone} we have $2c^2 + 2 = k$. This is nonsense for parity reasons.
\item If $a = 1$, then $c = 0$ (from \eqref{epsone}). Then $v_{k+2}\cdot v_{k+3} = 0$ tells us $kd = 0$, whence $d = 0$. The fact that $v_{k+3}\cdot v_{k+3} = n$ yields up $n = 1$, so $k^2 = 1$, contradicting $k \geq 3$.
\end{enumerate}
This completes the proof.
\end{proof}

\section{Heegaard Floer Homology of $\Sigma(P(k,-k,2))$}
\label{s:0b}

To complete the work started in the previous section we now compute the graded Heegaard Floer homology of $\Sigma(P(k,-k,2))$. The key technology for this is found in Ozsv\'ath and Szab\'o \cite{OSPlumbed}, where the two authors present a combinatorial algorithm for determining the Heegaard Floer homology of plumbed three-manifolds (such as small Seifert fibred spaces, as we have here).

Before we can explain why the Heegaard Floer homology is relevant, however, it is good to streamline some notation. Define $D:= \det P(k,-k,2)=k^2$ and write $\Sigma := \Sigma(P(k,-k,2))$ and $L:=L(D,2)$. The integer $n$ should be defined by $D = 2n-1$ and since $D \equiv 1 \mod 4$, we set $n = 2s+1$. We remind the reader that we implicitly only care about $k \geq 3$, since $k = 1$ yields the unknot.

Now the obstruction to $u(K)=1$, taken from Theorem 4.1 of \cite{OSUnknot}. We present only the half of this theorem where $D\equiv 1 \mod 4$, since that is all we need.

\begin{thm}
\label{OSunknot}
If $\kappa$ is a knot in $S^3$ such that $S^3_{-D/2}(\kappa)$ is an $L$-space, where $D \equiv 1 \mod 4$, and if
\begin{equation}
\label{zero}
d(S^3_{-D/2}(\kappa),0) = d(L,0),
\end{equation}
then for $i = 0, 1,\dots, s$,
\begin{equation}
\label{obs}
d(S^3_{-D/2}(\kappa),i) - d(L,i) = d(S^3_{-D/2}(\kappa),2s-i) - d(L,2s-i),
\end{equation}
where the labelling on the $\Spinc$-structures is by $\half \mathfrak{c}_1$.
\end{thm}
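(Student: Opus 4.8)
The plan is to express the correction terms of the half-integral surgery $S^3_{-D/2}(\kappa)$ in terms of the knot Floer complex of $\kappa$, and then to deduce the asserted symmetry from the resulting combinatorics together with the $L$-space hypothesis. The essential tool is the rational surgery formula of Ozsv\'ath and Szab\'o, which realises $HF^+(S^3_{p/q}(\kappa))$ as the homology of a mapping cone assembled from the large-surgery complexes $A^+_j$ — each quasi-isomorphic, for an $L$-space knot, to a single tower graded-shifted by $-2V_j$ — together with copies of $HF^+(S^3)$; for the correction terms this yields, in the form later streamlined by Ni and Wu, a closed expression for $d(S^3_{p/q}(\kappa),i)$ as $d(S^3_{p/q}(U),i)$ minus twice a maximum of two of the non-negative, non-increasing integers $V_\bullet = V_\bullet(\kappa)$ (these satisfy $V_j - V_{j+1}\in\{0,1\}$ and $V_j = 0$ for $j\ge g(\kappa)$). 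First I would reduce to that setting: since $-D/2<0$ and $S^3_{-D/2}(\kappa)$ is an $L$-space, passing to the mirror produces a positive $L$-space surgery $S^3_{D/2}(\overline{\kappa}) = -S^3_{-D/2}(\kappa)$, and I would carefully record the induced relabelling of $\Spinc$ structures, the sign flip of $d$ under orientation reversal, and the identity $d(S^3_{-D/2}(U),i) = d(L(D,2),i)$. The upshot of this step is an identity
$$d(S^3_{-D/2}(\kappa),i) - d(L(D,2),i) = 2\,w_i, \qquad w_i = \max\{V_{a(i)},\,V_{b(i)}\}\ge 0,$$
with $a$ increasing and $b$ decreasing, both explicit floor-type reindexings coming from $q = 2$ and the $\Spinc$ bookkeeping.

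I would then extract the symmetry from the sequence $\{w_i\}$. The $L$-space hypothesis, via the bound $D/2 \ge 2g(\kappa)-1$ on the surgery slope, confines the indices $j$ with $V_j\neq 0$ to a range small enough that only the $\Spinc$ structures labelled $0,1,\dots,s$ in the statement can carry nonzero defect $2w_i$ — this is why the conclusion is phrased over exactly that range. The normalisation $d(S^3_{-D/2}(\kappa),0) = d(L,0)$ reads $w_0 = 0$; by monotonicity of $V_\bullet$ this forces a whole tail $V_j = 0$ ($j\ge j_0$) and so ``centres'' the sequence. Finally the conjugation symmetry of knot Floer homology — equivalently the relation between the $V$'s and the $H$'s of $\kappa$, which reflects $V_\bullet$ about the appropriate index — combines with this centring to yield $w_i = w_{2s-i}$ for $i = 0,\dots,s$; doubling is precisely \eqref{obs}.

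The hardest part will be the $\Spinc$-structure bookkeeping underlying the displayed identity: three labellings have to be reconciled — the one by $\half c_1$ used in the statement, the cyclic labelling $i\in\mathbb{Z}/D$ native to the surgery formula, and the standard labelling of $L(D,2)$ — and one must check that the pairing $i\leftrightarrow 2s-i$ in the conclusion matches exactly the reflection symmetry of $\{w_i\}$ after the mirroring and orientation-reversal signs have been accounted for. A closely related point requiring care is confirming that the defect genuinely has the form $+2w_i$ with $w_i\ge 0$, so that $w_0 = 0$ is a real constraint rather than a vacuous one, and that the genus bound from the $L$-space condition keeps every relevant $V_j$ in the regime where the reflection argument applies; these are exactly the places where the hypotheses $D\equiv 1\bmod 4$ and the specific slope $-D/2$ enter, and where the treatment of the $D\equiv 3\bmod 4$ case would diverge.
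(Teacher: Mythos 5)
The first thing to note is that the paper does not prove this statement: it is quoted verbatim (in the case $D\equiv 1\bmod 4$) from Theorem~4.1 of \cite{OSUnknot}, so the only proof to compare yours with is Ozsv\'ath--Szab\'o's. Their argument is indeed of the type you outline --- express the correction terms of $\pm D/2$-surgery in terms of the non-negative integers attached to the knot filtration of $\kappa$ (your $V_\bullet$, in the later Ni--Wu packaging) and compare with the corresponding surgery on the unknot --- and your preliminary reductions (passing to the mirror, the sign flip of $d$ under orientation reversal, the conclusion that the defect is $+2\max\{V_{a(i)},V_{b(i)}\}\ge 0$) are correct. So the strategy is the right one.

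The gap is at the step that carries all the content: the source of the involution $i\mapsto 2s-i$. You assert that the conjugation symmetry of knot Floer homology, combined with the ``centring'' $w_0=0$, yields $w_i=w_{2s-i}$. But the conjugation symmetry $H_j=V_{-j}$ is exactly what produces the two-term maximum in the Ni--Wu formula, and at the level of the closed manifold it only gives $d(Y,\mathfrak{t})=d(Y,\bar{\mathfrak{t}})$, i.e.\ invariance of the defect under the conjugation involution, which in the $\half\mathfrak{c}_1$-labelling of the statement is $i\mapsto -i$, not $i\mapsto 2s-i$. These are genuinely different relations (the paper uses both $d(\cdot,i)=d(\cdot,-i)$ and \eqref{obs} independently; if the defect were invariant under both involutions on all of $\mathbb{Z}/D$ it would be constant, since together they generate translation by $2s$, a unit mod $D=4s+1$). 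The pairing $i\leftrightarrow 2s-i$ instead reflects the two ways a $\Spinc$ structure on the boundary extends over the trace of the half-integral surgery relative to the second, $(-2)$-framed handle of the $4$-manifold with form $R_n$, and proving the defect is invariant under it requires precisely the identification you defer as ``the hardest part'': pinning down the affine relabelling between the mapping-cone labelling and the $\half\mathfrak{c}_1$-labelling, and exhibiting exactly how the normalisation \eqref{zero} and the vanishing range of $V_\bullet$ enter. As written, nothing in the outline determines that relabelling, so the specific symmetry \eqref{obs} is never derived; monotonicity of $V_\bullet$ plus ``$w_0=0$ forces a vanishing tail'' is not enough. Relatedly, your claim that the $L$-space genus bound confines the nonzero defects to the labels $0,\dots,s$, and that this is why the conclusion is stated on that range, is not correct: that range is simply a fundamental domain for the pairing $i\leftrightarrow 2s-i$ inside $\{0,\dots,2s\}$, the remaining $\Spinc$ structures being accounted for by conjugation, and nonzero defects certainly occur outside it.
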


The symmetries exhibited in \eqref{obs} give our obstruction to unknotting number one as follows. By Lemma \ref{sign} we know that $K = P(k,-k,2)$ must be undone by changing a negative crossing. Therefore, applying Theorem \ref{thm:montesinos}, $\Sigma(K) = S^3_{-D/2}(\kappa)$ for some knot $\kappa$. We already know that $\Sigma(K)$ is an $L$-space. Hence, provided that we can establish that \eqref{zero} holds, the equations \eqref{obs} will give our obstruction: we shall show that if $k \geq 5$, at least one of them must fail.

If the reader is wondering why we do not use the full power of Theorem 1.1 of \cite{OSUnknot}, the reason is that the conditions on positive and even matchings are not strong enough to obstruct our pretzels. The symmetry condition, however, is, and this is essentially just Theorem \ref{OSunknot}.

\subsection{Correction Terms of $\Sigma$}

Recall from Section \ref{plumbings} that a 4-manifold with boundary can be constructed by plumbing disc bundles over $S^2$ according to a graph $G$. In this instance our $X = X(G)$ uses the same graph as in Figure \ref{fig:goodplumbing}. As $X$ is simply connected, we can identify $H^2(X) = \Hom(H_1(X),\mathbb{Z})$, and so $H^2(X)$ is the $\mathbb{Z}$-module spanned by the $\Hom$-duals $[S_v]^*$. By mapping $H_2(X)$ to $H^2(X,\partial X)$ via Poincar\'e duality, we have the following commutative diagram:
\begin{equation}
\label{exact}
\begin{CD}
& & & &  \Spinc(X) @>>> \Spinc(\Sigma) &\\
& & & & @V{\mathfrak{c}_1}VV @V{\mathfrak{c}_1}VV &\\
0 @>>> H_2(X) @>>> H^2(X) @>>> H^2(\Sigma) @>>> 0\\
& & @VVV @VVV @VVV &\\
0 @>>> \mathbb{Z}^{b_2(X)} @>{Q}>> \mathbb{Z}^{b_2(X)} @>{\alpha}>> \coker Q @>>> 0
\end{CD}
\end{equation}
The vertical maps between the lower two rows are isomorphisms and we use them to identify each of their domains and codomains. From the middle row, it is now clear that $\ker \alpha$ is spanned by those $K$ which are $\mathbb{Z}$-linear combinations of the rows of $Q$.

To see how the $\Spinc$-structures on $X$ and $\Sigma$ fit into this picture, define the set of characteristic covectors for $G$, denoted $\Char(G)$, to be those $K \in H^2(X)$ such that
$$\left<K,[S_v]\right>\equiv \left<[S_v], [S_v]\right> \mod 2 \text{ for all } v \in V(G).$$
Now, it is well known that the $\Spinc$-structures on $X$ correspond precisely with $\Char(G)$ via $\mathfrak{c}_1$; similarly, the $\Spinc$-structures on $\Sigma$ are in bijection with $2H^2(\Sigma)$ (also via $\mathfrak{c}_1$). Since $H^2(\Sigma)$ is of odd order, the $\Spinc$-structures are therefore in bijection with $H^2(\Sigma)$ and hence also with $\coker(Q)$.

What we want, then, is a good set of representatives for $\coker(Q)$, since these will represent the $\Spinc$-structures on $\Sigma$. We write $\mathfrak{t}(K)$ for the $\Spinc$-structure on $\Sigma$ determined by the equivalence class $[K]$ in $\coker (Q)$ determined by $K \in \Char(G)$. We observe that if $\mathfrak{s}_1,\mathfrak{s}_2 \in \Spinc(X)$ restrict to the same $\Spinc$-structure on $\Sigma$, then their corresponding covectors (respectively $K_1,K_2$) are congruent modulo $2H_2(X)$. In other words, $K_1 \equiv K_2 \mod Q$, since $H^2(\Sigma)$ is of odd order, or $(K_1-K_2)Q^{-1} \in \mathbb{Z}^{b_2(X)}$.

The results that Ozsv\'ath and Szab\'o give in \cite{OSPlumbed} state that, assuming $Q$ is negative-definite and that there is at most one overweight vertex in the graph, the correction term $d(Y,\mathfrak{t})$ is
\begin{equation}
\label{compute}
d(Y,\mathfrak{t}) = \frac{1}{4}\left(\max_{K : \mathfrak{t}(K) = \mathfrak{t}} \left<K,K\right> + \abs{G}\right).
\end{equation}
A vertex $v$ is \emph{overweight} if $w(v) > -d(v)$, where $d(v)$ is the degree of $v$.

We will write all elements $K \in H^2(X)$ according to their evaluations on all $v \in V(G)$ (that is, in the $\Hom$-dual basis). Written thus, the square in \eqref{compute} is $KQ^{-1}K^t$. Incidentally, \eqref{compute} also shows that $X$ is sharp.

Having set this all up, the actual algorithm for finding the maximisers $K$ for \eqref{compute} is as follows. Consider all $K \in\Char(G)$ that satisfy
\begin{equation}
\label{startpath}
w(v) + 2 \leq \left<K,[S_v]\right> \leq -w(v).
\end{equation}
Set $K_0 := K$. Then, should one exist, choose a $v_{i+1}\in V(G)$ such that
$$\left<K_i,[S_{v_{i+1}}]\right> = -w(v_{i+1}),$$
and set $K_{i+1} := K_i + 2\PD[v_{i+1}]$ (which we will refer to as \emph{pushing down} the value of $K_i$ on $v_{i+1}$). By $\PD[v]$ we mean the image of $\PD[S_v]$ in $H^2(X)$ using \eqref{exact}. Pushing down then amounts to adding two copies of the corresponding row of $Q$.

After continuing in this fashion, terminate at some covector $K_n$ when one of two things happens. Either
$$w(v) \leq \left<K_n,v\right> \leq -w(v) -2 \text{ for all } v \in V(G),$$
in which case we say the path $(K_0,K_1,\dots,K_n)$ is \emph{maximising}, or
$$\left<K_n,v\right> > -w(v) \text{ for some } v\in V(G),$$
in which case the path is \emph{non-maximising}. Ozsv\'ath and Szab\'o show (in Proposition 3.2 of \cite{OSPlumbed}) that the maximisers required for computing correction terms can be taken from a set of characteristic covectors $\Charm(G)$ satisfying \eqref{startpath} with the additional property that they initiate a maximising path.

To apply this to our pretzel, consider the plumbing in Figure \ref{fig:goodplumbing}. Note that $v_k$ is the central (3-valent) vertex. With the labelling specified there, our intersection form $Q$ has matrix
$$Q = \begin{pmatrix}-2 &1 &   &  & & & &\\
1 & -2 &1  & & & & & \\
  &1  &-2  & & & & & \\
 & & &\ddots & & & & \\
 & & &     &-2 &1 & &\\
 & & &      &1 &-2 &1 &1\\
 & & &      & &1 &-2 & \\
 & & &      & &1 &   &-k
\end{pmatrix},$$
and as there is only one overweight vertex (the central one), the above algorithm is applicable. We present the result of it below.

\begin{prop}
The following characteristic covectors initiate maximising paths:
\begin{enumerate}
\item $(0,0, \dots, 0,2,0, \dots, 0,j)$, where the $2$ is in the $i^{\text{th}}$ place, and $j\in \mathbb{Z}$ is odd and $2-k \leq j \leq k-4$;
\item $(2,0, \dots, 0,0,k-2)$ and $(0,\dots,0,2,k-2)$; and
\item $(0,\dots,0,j)$ where $j$ is an odd integer satisfying $2-k \leq j \leq k$.
\end{enumerate}
Moreover, there are no other vectors that initiate full paths.
\end{prop}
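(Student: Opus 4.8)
The plan is to run the Ozsv\'ath--Szab\'o algorithm of \cite{OSPlumbed} directly on the plumbing graph $G$ of Figure \ref{fig:goodplumbing}, starting from each characteristic covector in the ``box'' \eqref{startpath} and tracking which of them initiate a maximising path. Recall that the non-overweight vertices $v_1,\dots,v_{k-1}$ and $v_{k+1}$ all have weight $-2$, so by \eqref{startpath} a characteristic covector $K$ must satisfy $\langle K,[S_v]\rangle \in \{-2,0\}$ at every such vertex (recall the congruence $\langle K,[S_v]\rangle \equiv \langle [S_v],[S_v]\rangle \equiv 0 \bmod 2$). At the overweight central vertex $v_k$ (weight $-2$, degree $3$) the box gives $\langle K,[S_{v_k}]\rangle\in\{0,2\}$, and at the long-arm end vertex $v_{k+2m}$ (here $m=1$, weight $-k$, degree $1$) it gives the odd range $2-k\le \langle K,[S_{v_{k+2m}}]\rangle\le k$. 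So the starting covectors are parametrised by a vector of $0$'s and $-2$'s on the short vertices, a choice of $0$ or $2$ at $v_k$, and an odd integer $j$ in $[2-k,k]$ at the end.

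The first main step is to observe that any starting covector with a $-2$ somewhere on the two length-$(k-1)$ and $(2m-1)$ arms can be pushed down immediately is false — rather, one shows that a $-2$ appearing on an arm forces the path to become \emph{non}-maximising, because pushing down is only triggered by a value equal to $-w(v)=+2$, and adding $2\,\PD[v]$ at an interior degree-$2$ vertex (a row of $Q$ equal to $(\dots,1,-2,1,\dots)$) raises the neighbours by $2$; a careful bookkeeping along the linear arm shows the disturbance propagates and eventually violates the terminal inequality $\langle K_n,v\rangle\le -w(v)-2$ at the central or end vertex unless the arm was entirely $0$ except possibly one isolated $2$ near a specific spot. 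Concretely, I would analyse the arms as a discrete one-dimensional problem: a sequence of entries in $\{-2,0,2\}$ on the short arm, where a $2$ is ``active'' and must be pushed, and trace the resulting cascade, using the standard fact (Prop.\ 3.2 of \cite{OSPlumbed}) that maximising paths are insensitive to the order of pushdowns. This reduces the possibilities to: all short entries $0$; exactly one short entry equal to $2$ (case (1)); and the configurations where the initial $2$ sits adjacent to the central vertex and interacts with it, producing after one pushdown the vectors in case (2).

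The second step handles the purely central/end interaction: with all short entries $0$, the covector is $(0,\dots,0,\epsilon,0,\dots,0,j)$ with $\epsilon\in\{0,2\}$ at $v_k$ and $j$ odd in $[2-k,k]$. If $\epsilon=2$, one pushdown at $v_k$ adds $2Q_{v_k}$, which lowers $v_k$ to $-2$, raises both short neighbours to $2$ (triggering a finite cascade back down the two short arms) and raises the end vertex's value by $2$; chasing this through, it terminates in a maximising path precisely when the \emph{final} end value lands in $[2-k,k-2]$, i.e.\ when the original $j$ satisfied the range forcing the pushed value to be $\le -w(v_{k+2m})-2 = k-2$; this is what yields the bound $2-k\le j\le k-4$ in case (1) after relabelling, and the two exceptional vectors $(2,0,\dots,0,k-2)$ and $(0,\dots,0,2,k-2)$ in case (2) when the ``$2$'' is instead placed on a short vertex adjacent to the center so that one pushdown there feeds into $v_k$. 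If $\epsilon=0$ and all shorts are $0$, there is nothing to push, and the covector is automatically maximising iff it already lies in the terminal box, i.e.\ $j\le k-2$; but one must then also check $j=k$ and $j=k-... $ separately — the end vertex has $-w=k$, so $j=k$ gives $\langle K,v_{k+2m}\rangle = k = -w$, which is a pushdown trigger, and pushing down at $v_{k+2m}$ (row $(\dots,1,-k)$) lowers it to $k-2k=-k$ and raises its unique neighbour $v_k$ by $2$, launching another cascade; working this out shows it still terminates maximising, which is why case (3) is allowed to include $j=k$ (and, via the symmetry $K\mapsto -K$, $j=2-k$), whereas case (1)'s range stops at $k-4$.

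The main obstacle I anticipate is the bookkeeping of the cascades: once a pushdown is triggered, the value $+2$ it creates at a neighbour is itself a trigger, so one gets a chain of pushdowns running down a linear arm, and one must verify that this chain (i) terminates, (ii) terminates in the maximising box rather than overshooting to a non-maximising state, and (iii) does not depend on the order in which one resolves simultaneously-active vertices. The cleanest way to control this is to compute, for a linear arm of length $\ell$ with a single seeded $+2$ at position $i$, the net effect of fully resolving it (a telescoping sum of consecutive rows of the tridiagonal $Q$), show it equals a simple explicit covector, and then only worry about what that covector does at the two ``boundary'' vertices (the center $v_k$ and the far end). Combined with the $K\leftrightarrow -K$ symmetry of the whole setup (which swaps the roles of $0$ and $-2$ in the box and negates $j$), this should let me enumerate exhaustively and match the three listed families, and simultaneously confirm there are no others — every starting covector not of the listed form produces, after its forced cascade, a value exceeding $-w(v)$ at some vertex, hence a non-maximising (or incomplete) path.
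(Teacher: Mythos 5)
Your plan rests on a mis-reading of the starting region \eqref{startpath}: for a vertex of weight $-2$ it reads $0 \leq \langle K,[S_v]\rangle \leq 2$, so \emph{every} weight-$-2$ vertex (not just the central one) carries the value $0$ or $2$ in an initial covector; the set $\{-2,0\}$ you assign to the arm vertices is the \emph{terminal} range $w(v) \leq \langle K_n,[S_v]\rangle \leq -w(v)-2$, not the initial one. Consequently your ``first main step'' (that a $-2$ on an arm forces a non-maximising path) concerns covectors that are not in the starting set at all, and the enumeration on which the rest of the argument is built is skewed. The confusion propagates: the two exceptional vectors of case (2) have their $2$ at a \emph{leaf} ($v_1$ or $v_{k+1}$), not at a vertex adjacent to the centre ($v_{k-1}$ is adjacent to $v_k$ yet gets no exception, while $v_1$ is not adjacent and does); and your proposed mechanism for the bound $j \leq k-4$ --- that the end value raised by the cascade must already lie in the terminal range --- is not valid, because a value equal to $-w(v)$ merely triggers further pushdowns rather than ending or spoiling the path. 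Ruling out $j=k-2$ for the $2$ in positions $i=2,\dots,k$ genuinely requires following the cascade until some value strictly exceeds $-w(v)$; in the paper's argument the final coordinate of $(0,\dots,2_{(i)},\dots,0,k-2)$ is eventually driven up to $k+2$.

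The larger gap is that the hardest half of the statement --- that the listed $k^2$ covectors really \emph{do} initiate maximising paths --- is only asserted in your sketch (``working this out shows it still terminates maximising''). The paper avoids this bookkeeping entirely: it eliminates only the provably bad covectors (any box covector with two entries equal to $-w(v)$, and the covectors $(0,\dots,2_{(i)},\dots,0,k-2)$ for $i=2,\dots,k$), then counts the survivors, $(k+1)(k-2)+2+k=k^2=\abs{H^2(\Sigma)}$. Since by Proposition 3.2 of \cite{OSPlumbed} every $\Spinc$-structure on $\Sigma$ is represented by at least one box covector initiating a maximising path, the $k^2$ survivors must all be such initiators, one per $\Spinc$-structure, with no direct verification needed. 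If you prefer the direct-verification route, it can in principle work, but then you must actually carry out the cascade analysis you defer in points (i)--(iii), including the interaction at the trivalent vertex, where one pushdown raises \emph{three} neighbours, and the two asymmetric arm lengths $k-1$ and $1$ --- precisely the part of the argument your proposal leaves open.
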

\begin{proof}
Let $K \in \Char(G)$ satisfy \eqref{startpath}. We show that if $K$ satisfies either of two conditions below then it must initiate a non-maximising path.

First suppose that there are two $v \in V(G)$ such that $\left<K,[S_v]\right> = -w(v)$. Then, pushing down at $v_{k+2}$ if necessary, we have a substring of $K$ that looks like $(2,0,\dots, 0, 2)$. On pushing down the 2's and then iterating with the 2's created within this substring, we eventually obtain a value 4 in the substring and thus $K$ initiates a non-maximising path.

We now consider $K = (0,\dots,0,2_{(i)},0,\dots, k-2)$ for $i = 2,\dots, k$. Push down the 2 here to create 2's on either side. Keep pushing these newly created 2's down in either direction; the result is $(-2,0,\dots,0,2_{(i)},0,\dots,-2,k)$. On repeating this procedure, we end up with $k+2$ in the final co-ordinate. As this is too large, $K$ initiates a non-maximising path.

The remaining $K$, then, are precisely those listed above. Since there are $(k+1)(k-2) = k^2 -k - 2$ such vectors of the first kind, 2 of the second, and $k$ of the last, we have $k^2$ in total. That being the order of $H^2(\Sigma)$, these must initiate maximising paths and enumerate the different $\Spinc$-structures on $\Sigma$.
\end{proof}

We give the maximisers $\Charm(G)$ the following names:
$$K^1_{i,j} := (0,\dots,2_{(i)},\dots, 0,j) \text{ for $j$ odd, $2-k\leq j \leq k- 4$}$$
$$K^2_1 :=(2,0,\dots, k-2) \qquad \qquad \qquad K^2_2:=(0,\dots,0,2,k-2)$$
$$K^3_j :=(0,\dots,0,j) \text{ for $j$ odd, $2-k\leq j \leq k$}$$
It will sometimes be useful to allow $K^1_{i,j}$ to define the same type of covector as above, but with a value for $j$ outside the range and parity specified. In this case we emphasise that it does not represent a maximiser useful for calculating the corresponding correction term. Following this trend, it is also sometimes useful to set
$$K^1_{0,j} := K^3_j.$$

To compute the correction terms, then, we need $Q^{-1}$. This calculation is surprisingly tractable so we present the result directly: $Q^{-1} = \tfrac{1}{k^2}(c_{ij})$, where
$$c_{ij} = \begin{cases} -i(k^2-jk+2j) & i\leq j \leq k-1\\
-2jk& i = k, j\leq k\\
-jk & i = k+1, j \leq k\\
-k^2 & i = j = k+1\\ 
-2j & i =k+2, j \leq k\\
-k & i = k+2, j = k+1\\
-(k+2) & i = j = k+2\\
c_{ji} & \text{all } i,j
\end{cases}.$$

This in turn permits an explicit calculus of the squares below:
$$(K^1_{i,j})^2 =\begin{cases} - \tfrac{1}{k^2} (4i(k^2-ik+2i) + (k+2)j^2+8ij) & \text{for }i = 0,\dots,k\\
- \tfrac{1}{k^2}(4k^2 +(k+2)j^2 +4kj) &\text{for }i = k+1\end{cases}$$
$$(K^2_i)^2 =\begin{cases}- (k+2) &\text{for }i = 1\\
- \tfrac{1}{k^2}(k^3+6k^2-12k+8) &\text{for }i = 2\end{cases}$$
The computation of $d(\Sigma,\mathfrak{t}(K))$ is then trivial. In what follows we write $d(\cdot, K)$ in place of $d(\cdot, \mathfrak{t}(K))$ as the meaning is clear.

\subsection{Correction Terms for the Lens Space $L$}

We need to repeat this procedure for the corresponding lens space, $L(D,2)$, which has a plumbing given by the tree $H$ on two vertices, weighted $-n$ and $-2$ (recall $k^2 = 2n-1 = 4s+1$). It therefore has intersection form given by
$$R_n = \begin{pmatrix}-n &1\\1&-2\end{pmatrix},$$
and in this case the inverse is trivially
$$R^{-1}_n = -\frac{1}{k^2} \begin{pmatrix}2 &1\\1 &n\end{pmatrix}.$$

What remains is then to establish the labelling of the $\Spin$-structures on $L$ as they are required for Theorem \ref{OSunknot} and compute their correction terms. This is in fact already done by Ozsv\'ath and Szab\'o in \cite{OSPlumbed}.

\begin{lem}[Ozsv\'ath and Szab\'o]
The lens space $L(D,2)$ has characteristic covectors given by the map $\psi : \mathbb{Z}/(D) \longrightarrow \coker(R_n)$, defined below.
\begin{equation}
\label{lens}
\psi(i) = \begin{cases}(2i-1,2) &0 \leq i\leq s\\
(2i-4s-1,0) &s+1 \leq i \leq 3s+1\\
(2i-8s-3,2) &3s+2 \leq i \leq 4s \end{cases}.
\end{equation}
\end{lem}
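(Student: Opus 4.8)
The plan is to read off the maximising characteristic covectors of $L = L(D,2)$ directly from the plumbing algorithm recalled in the previous subsection — equivalently, to quote the lens-space computation of \cite{OSPlumbed} after checking that its conventions match ours. As noted above, $L$ bounds the linear plumbing $X(H)$ whose graph $H$ has two vertices $u_1, u_2$ of weights $-n$ and $-2$ joined by an edge; since the Hirzebruch--Jung continued fraction satisfies $[n,2] = n - \tfrac12 = \tfrac{D}{2}$, a slam-dunk identifies $\partial X(H)$ with $-D/2$-surgery on the unknot, which is $L(D,2)$ in our convention. Neither vertex is overweight (both have degree $1$ while $-n, -2 \le -1$), so $R_n$ is negative-definite and the \os algorithm applies verbatim.

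First I would enumerate the starting covectors. Writing $K = (a,b)$ in the $\Hom$-dual basis, the characteristic condition forces $a$ odd (as $n$ is odd) and $b$ even, while \eqref{startpath} restricts to $-n+2 \le a \le n$ and $b \in \{0,2\}$. Because $H$ has only two vertices, the push-down moves are completely explicit: pushing down at $u_2$ sends $(a,b) \mapsto (a+2, b-4)$ and pushing down at $u_1$ sends $(a,b) \mapsto (a-2n, b+2)$, in each case adding twice the relevant row of $R_n$. A short finite case analysis on the pair $(a,b)$ then shows that the covectors initiating a maximising path are precisely the $2s+1$ vectors $(2i-4s-1,0)$, $s+1 \le i \le 3s+1$, together with the $s+1$ vectors $(2i-1,2)$, $0 \le i \le s$, and the $s-1$ vectors $(2i-8s-3,2)$, $3s+2 \le i \le 4s$; this is $(2s+1)+(s+1)+(s-1) = 4s+1 = D = \abs{H_1(L)}$ covectors in all, so — by exactly the parity-and-counting argument used for the analogous proposition above on $\Sigma$ — these must be the full set of maximisers, and they enumerate $\Spinc(L)$. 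This gives \eqref{lens} as a complete system of representatives.

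The genuinely delicate point, and the one I expect to be the main obstacle, is to verify that the particular indexing $i \mapsto \psi(i)$ is the normalisation under which $\Spinc(L)$ is labelled by $\tfrac12\mathfrak{c}_1$ in the sense demanded by Theorem \ref{OSunknot}. Here I would follow the chain of identifications $\Spinc(L) \leftrightarrow 2H^2(L) \leftrightarrow H^2(L) \leftrightarrow \coker(R_n)$ supplied by the diagram \eqref{exact}, and confirm that under $\psi$ the distinguished structure $\psi(0)$ is the one realising $d(L,0)$, that conjugation of $\Spinc$-structures corresponds to $i \mapsto -i$ on $\mathbb{Z}/(D)$, and hence that $\psi$ matches the labelling of lens-space $\Spinc$-structures used in \cite{OSUnknot} when stating \eqref{obs}. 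It is this compatibility of the chosen representatives \eqref{lens} with the downstream $\tfrac12\mathfrak{c}_1$-labelling, rather than the algorithm itself, that requires care; since \cite{OSPlumbed} already carries out this identification, the argument ultimately reduces to citing that computation and checking the bookkeeping against our conventions.
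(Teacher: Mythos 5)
Your proposal is correct, but it is worth noting that it does more than the paper does: in the text this lemma is not proved at all, it is simply attributed to Ozsv\'ath and Szab\'o and cited from \cite{OSPlumbed}, whereas you re-derive it by running the push-down algorithm of Section \ref{s:0b} on the two-vertex plumbing for $L(D,2)$. Your derivation checks out: with $R_n$ as intersection form the characteristic and starting conditions \eqref{startpath} leave $4s+2$ candidate covectors $(a,b)$ with $a$ odd, $-n+2\leq a\leq n$, $b\in\{0,2\}$; the push-down moves are exactly the two you write, the single candidate $(n,2)$ is the only one initiating a non-maximising path, and the surviving $4s+1=D$ covectors are precisely those in \eqref{lens}, so the same counting argument the paper uses for $\Sigma$ shows they represent the distinct $\Spinc$-structures. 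You also correctly isolate the one point that cannot be settled by the algorithm alone and is the real content of the attribution: that the specific indexing $i\mapsto\psi(i)$ agrees with the $\half\mathfrak{c}_1$-labelling demanded by Theorem \ref{OSunknot} (so that $\psi(0)$ is the distinguished structure in \eqref{zero} and the symmetry in \eqref{obs} is about $s$), which is exactly what the citation to \cite{OSPlumbed} (together with the conventions of \cite{OSUnknot}) supplies; note that checking only that $\psi(0)$ realises $d(L,0)$ and that conjugation is $i\mapsto -i$ would pin the labelling down only up to a unit, so the final reduction to the published lens-space computation is genuinely needed, as you say. The trade-off is clear: the paper's citation is shorter, while your argument makes the covector list self-contained and verifiable within the framework already set up for $\Sigma$, leaving only the normalisation of the labelling to the literature.
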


To compute the correction terms, we write
$$d(L,\psi(i)) = \frac{-\psi(i)\begin{pmatrix}2&1\\1&n\end{pmatrix}\psi(i)^t + 2k^2}{4k^2},$$
or more explicitly,
\begin{align*}
d(L,\psi(i)) = \begin{cases}
-\tfrac{1}{k^2}(2i^2) &0 \leq i\leq s\\
-\tfrac{1}{2k^2}((2i-k^2)^2-k^2) & s+1 \leq i \leq 3s+1\\
-\tfrac{1}{k^2}(2(k^2-i)^2) &3s+2 \leq i \leq 4s 
\end{cases}.
\end{align*}

\subsection{First Application of Theorem \ref{OSunknot}}

To compare our correction terms for $\Sigma$ and $L$ we will need the isomorphism $\varphi : \mathbb{Z}/(D) \rightarrow \coker(Q)$ implicit in Theorem \ref{OSunknot}. Since $\varphi$ was only implicit in our statement of the theorem, let us be clear what we are doing. Suppose we can construct a particular isomorphism $\phi : \mathbb{Z}/(D) \rightarrow \coker(Q)$. Then we have two labellings of $\Spinc$-structures: one, $\psi$, for $\Spinc(L)$, the other, $\phi$, for $\Spinc(\Sigma)$. Theorem \ref{OSunknot} then tells us that there exists a labelling $\varphi$ for $\Spinc(\Sigma)$ such that
\begin{equation}
\label{obstruction}
d(\Sigma,\varphi(i)) - d(\Sigma,\varphi(2s-i)) = d(L,\psi(i)) - d(L,\psi(2s-i))
\end{equation}
for $i = 0,1, \dots, s$. We want to show, therefore, that for $k \geq 5$ such a $\varphi$ cannot exist by way of contradiction: if $\varphi$ \emph{did} exist, then we could precompose our $\phi : \mathbb{Z}/(D) \rightarrow \coker(Q)$ with some automorphism of $\mathbb{Z}/(D)$ such that the equations \eqref{obstruction} are satisfied. This automorphism must be multiplication by some $\ell$ coprime to $k^2$. That is, there must exist some $\ell$ such that
\begin{equation}
\label{obs2}
d(\Sigma,\phi(i\ell)) - d(\Sigma,\phi(2s\ell - i\ell)) = d(L,\psi(i))-d(L,\psi(2s-i)).
\end{equation}
To prove that $u(K) > 1$ when $k \geq 5$, we claim that no such $\ell$ exists, and it is in this direction that we proceed over the course of the following pages. First, however, we must specify a $\phi$ by specifying $\phi(1)$. We must also compute $\phi(0)$ to check the hypotheses of Theorem \ref{OSunknot}, but in this we have no choice.

As we saw in \eqref{exact}, the kernel of $\alpha$ is generated by $\PD[v]$. Thus, on observing that
\begin{equation}
\label{zeroelement}
K^2_1 = -2\sum_{i=1}^k \PD[v_i] - \PD[v_{k+1}] - \PD[v_{k+2}],
\end{equation}
we see that $K^2_1$ is the zero element of $\coker (Q)$.

Now to find a unit. This time we need to find the $K$ such that $m[K] = [K^2_1]$ if and only $k^2 \vert m$. Equivalently, the $K$ such that $(mK-K^2_1)Q^{-1} \in \mathbb{Z}^{k+2}$ if and only $k^2 \vert m$. Setting $K = K^1_{1,-1}$, we have
$$mK-K^2_1 = (2(m-1),0,\dots, 0, -m-(k-2)),$$
and on computing the $(k+2)$-th co-ordinate of $(mK-K^2_1)Q^{-1}$ we find
$$((mK-K^2_1)Q^{-1})_{k+2} = \tfrac{1}{k^2}(k^2-m(k-2)) \in \mathbb{Z}.$$
It follows that $k^2\vert m$ since $k^2$ and $k-2$ are coprime, $k$ being odd. Thus $K^1_{1,-1}$ is a unit.

Our choice of $\phi$, which we now fix, is then specified by
\begin{align*}
\phi(0) = K^2_1 && \phi(1) = K^1_{1,-1}.
\end{align*}
At this point, we are able to check that \eqref{zero} in Theorem \ref{OSunknot} is satisfied. Using the same sort of methods, we can check the same hypothesis for Theorem \ref{greene} for our pretzels $P(k,-k-2,2m)$, as promised in Section \ref{correctsharp}.

\begin{lem}
\label{check}
If $K = P(k,-k,2),P(k,-k-2,2m)$, then $d(\Sigma(K),0) = -d(L(\det K,2),0)$. In the first case, both correction terms vanish.
\end{lem}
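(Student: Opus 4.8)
The plan is to evaluate the two correction terms $d(\Sigma(K),0)$ and $d(L(\det K,2),0)$ independently and then compare them. For $d(\Sigma(K),0)$ I use the negative-definite plumbing $X_K$ of Figure~\ref{fig:goodplumbing}: it has a single overweight vertex (the central one), hence is sharp, so by~\eqref{compute} $d(\Sigma(K),\mathfrak{t})=\tfrac14\bigl(\max_{K:\,\mathfrak{t}(K)=\mathfrak{t}}\langle K,K\rangle+|G|\bigr)$. For $d(L(\det K,2),0)$ I use instead the linear two-vertex plumbing with weights $-n,-2$, where $\det K=2n-1$; its boundary is $L(\det K,2)$, its intersection form is $R_n$, and it has no overweight vertex, so it too is sharp and the same formula applies with $|G|=2$. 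In both cases the $\Spinc$-structure labelled $0$ is the self-conjugate one ($\mathfrak{c}_1=0$), which corresponds to the trivial coset of the relevant cokernel; so on each side I need the characteristic covector of largest square representing $0$.

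The case $K=P(k,-k,2)$ is essentially done already. Here $\phi(0)=K^2_1$ is the zero element of $\coker(Q)$ by~\eqref{zeroelement}, and it initiates a maximising path by the Proposition of Section~\ref{s:0b}; since $(K^2_1)^2=-(k+2)$ and the graph has $|G|=k+2$ vertices, \eqref{compute} yields $d(\Sigma(P(k,-k,2)),0)=\tfrac14\bigl(-(k+2)+(k+2)\bigr)=0$. On the lens-space side $\det K=k^2\equiv 1\bmod 4$, and $d(L(k^2,2),0)=d(L,\psi(0))=0$ by the correction-term formula for $L$ recorded in Section~\ref{s:0b} (equivalently, the trivial coset of $\coker(R_n)$ is maximised by a covector of square $-2$ when $n$ is odd). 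Both correction terms vanish, and in particular they agree up to sign.

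For $K=P(k,-k-2,2m)$ I run the same machinery on this pretzel. First one writes down $Q_K$ (as in the proof of Theorem~\ref{-2}) and computes $Q_K^{-1}$ in closed form, a calculation parallel to the one producing $Q^{-1}$ in Section~\ref{s:0b}. Then one applies the Ozsv\'ath--Szab\'o algorithm of~\cite{OSPlumbed} to pin down the maximiser of the trivial coset of $\coker(Q_K)$: by analogy with~\eqref{zeroelement} it is a characteristic covector $K_0$ of the same shape as $K^2_1$ (nonzero only on the first and the last vertex), and one evaluates $\langle K_0,K_0\rangle$ from $Q_K^{-1}$; substituting into~\eqref{compute} with $|G|=k+2m$ gives $d(\Sigma(K),0)=-\tfrac12$. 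On the other side $\det K=k^2+2k+4m\equiv 3\bmod 4$, so $n=\tfrac12(\det K+1)$ is even; then $(0,0)$ is characteristic for $R_n$ and lies in the trivial coset of $\coker(R_n)$, and being of square $0$ it is automatically the maximiser, so $d(L(\det K,2),0)=\tfrac14(0+2)=\tfrac12$. Hence $d(\Sigma(K),0)=-\tfrac12=-d(L(\det K,2),0)$, as required.

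The one genuinely tedious point is in the third paragraph: inverting the $(k+2m)\times(k+2m)$ matrix $Q_K$, correctly singling out the characteristic covector representing $0\in\coker(Q_K)$ and checking that it initiates a maximising path (so that~\eqref{compute} applies), and verifying that its square is exactly $-(k+2m+2)$, so that after adding $|G|=k+2m$ the correction term collapses to $-\tfrac12$. Everything else---sharpness of both plumbings, the identification of the $0$-labelled $\Spinc$-structure with the trivial coset, and the computation on $R_n$---is routine, and the whole argument runs in close parallel with the computations already carried out for $P(k,-k,2)$ in Section~\ref{s:0b}.
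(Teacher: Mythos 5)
Your computation for $P(k,-k,2)$ and your handling of both lens-space sides are correct and match the paper's. The gap is in the second family: for $P(k,-k-2,2m)$ you assert, ``by analogy with \eqref{zeroelement}'', that the trivial-coset maximiser is a covector of the same shape as $K^2_1$, nonzero only on the first and last vertices. That is true only when $m=1$. For general $m$ the characteristic covector representing $0\in\coker(Q_K)$ that initiates a maximising path is $(0,\dots,0,2_{(m)},0,\dots,0,k+2-2m)$ when $k\geq m$, and $(0,\dots,0,2_{(m)},0,\dots,0,-k+2)$ when $k\leq m$, with the entry $2$ at the $m$-th vertex, not the first. Since $\Sigma(K)$ is an $L$-space, the maximising representative of the zero $\Spinc$-structure is unique, so a covector of your claimed shape cannot serve for $m>1$: it is either not in the trivial coset or initiates a non-maximising path. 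Because you assert rather than derive the value $\langle K_0,K_0\rangle=-(k+2m+2)$, this is precisely the point at which your argument, carried out literally, would fail; and that verification is the substantive content of the lemma in this case.

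Note also that the paper's route avoids inverting $Q_K$ altogether, which you flag as the tedious step. Writing the zero-coset covector as $K=\sum_i k_i\PD[v_i]$ gives $KQ^{-1}=\sum_i k_i e_i$, so $\langle K,K\rangle=\sum_i k_i\langle K,[S_{v_i}]\rangle$ is an immediate dot product; with the coefficients above this yields $-(k+2m+2)$, hence $d(\Sigma(K),0)=-\tfrac12$, and the explicit $\PD$-expansion simultaneously certifies that the covector lies in the trivial coset. Your plan of computing $Q_K^{-1}$ in closed form would also work, but only after the maximiser is correctly identified; with that correction (and the check that it initiates a maximising path), the rest of your argument, including $d(L(\det K,2),0)=\tfrac12$ from $\det K\equiv 3\bmod 4$, goes through.
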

\begin{proof}
We do $P(k,-k,2)$ first. By \eqref{compute} we have
$$d(\Sigma(K),0) = \frac{(K^2_1)Q^{-1}(K^2_1)^t + (k+2)}{4}.$$
Since $\PD[v_i]$ is the $i$-th row of $Q$, it follows that if $K^\prime = \sum_{i=1}^{k+2} k_i \PD[v_i]$ then $K^\prime Q^{-1} = \sum_{i=1}^k k_i e_i$, where $e_i$ is the $i$-th standard basis vector for $\mathbb{Z}^{k+2}$. Thus the square in the above correction term is nothing more than $\sum_{i = 1}^k k_i \left<K^\prime,[S_{v_i}]\right>$, and on computing this for $K^\prime = K^2_1$ using \eqref{zeroelement} we find that $d(\Sigma(K),0) = 0$. Since $\det K \equiv 1 \mod 4$, it is also true that $d(L(\det K,2),0) = 0$ (calculated similarly), and we are done.

Note that as we had already computed $(K^2_1)^2$ we could have done this proof immediately. However, the method just presented allows us to generalise to $P(k,-k-2,2m)$, whose double branched cover is also an $L$-space. There we replace \eqref{zeroelement} with
\begin{align*}
K &= -\sum_{i=1}^m 2i \PD[v_i] - 2m\sum_{i=m+1}^k\PD[v_i] - \sum_{i=k+1}^{k+2m-1} (k+2m-i) \PD[v_i] - \PD[v_{k+2m}]\\
&=(0,\dots,0, 2_{(m)}, 0,\dots,0,k+2-2m)
\end{align*}
if $k \geq m$, or
\begin{align*}
K &=-\sum_{i=1}^k 2i\PD[v_i] - \sum_{i=k+1}^{m} (k+i)\PD[v_i] - \sum_{i=m+1}^{k+2m-1} (k+2m-i)\PD[v_i] - \PD[v_{k+2m}]\\
&= (0,\dots,0,2_{(m)},0,\dots,0,-k+2)
\end{align*}
if $k \leq m$. One can check that these initiate maximising paths, and as $\Sigma(K)$ is an $L$-space these must be unique representatives of the zero $\Spinc$-structure. This tells us that $d(\Sigma(K),0) = -\half$. Similarly, as $\det K \equiv 3 \mod 4$, we have $d(L(\det K,2),0) = \half$, and we are done.
\end{proof}

Having now checked all the hypotheses and set ourselves up for Theorem \ref{OSunknot}, we now make our first applications of it.

\begin{prop}
\label{congruence}
Suppose $P(k,-k,2)$ has unknotting number one. Then there exists an $\ell$ coprime to $k^2$ such that
$$\ell^2(3k-2) \equiv -8 \mod k^2.$$
Equivalently,
\begin{equation}
\label{cong}
\ell^2 \equiv 6k+4 \mod k^2
\end{equation}
\end{prop}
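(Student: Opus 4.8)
The plan is to extract the congruence from the single instance $i=0$ of the symmetry relations \eqref{obs2},
\begin{equation*}
d(\Sigma,\phi(0)) - d(\Sigma,\phi(2s\ell)) = d(L,\psi(0)) - d(L,\psi(2s)),
\end{equation*}
feeding it through the correction-term formulae already assembled. Three of the four terms are essentially free: $\phi(0)=[K^2_1]$ is the zero element of $\coker(Q)$, so $d(\Sigma,\phi(0))=0$ by Lemma \ref{check}; on the lens-space side, $\psi(0)=(-1,2)$ lies in the first branch of \eqref{lens}, giving $d(L,\psi(0))=0$, while for $k\geq 3$ the index $2s$ lies in the middle branch of \eqref{lens} (since then $s+1\leq 2s\leq 3s+1$), so $\psi(2s)=(-1,0)$ and $d(L,\psi(2s))=\tfrac{k^2-1}{2k^2}$. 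Thus the relation collapses to the single requirement $d(\Sigma,\phi(2s\ell))=\tfrac{k^2-1}{2k^2}$, and the proposition reduces to turning this into a congruence on $\ell$.

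The key point is that one never needs to identify which of the enumerated maximisers $\phi(2s\ell)$ actually is: only the residue of $4\,d(\Sigma,\phi(m))$ modulo $\Z$ matters, and this residue is already captured by the ``naive'' square $m^2(K^1_{1,-1})^2$ rather than by the true maximiser of the class. Indeed, since $\phi(m)=[\,m K^1_{1,-1}\,]$, the maximiser representing $\phi(m)$ has the form $mK^1_{1,-1}+\xi Q$ for an integer vector $\xi$, and expanding $(mK^1_{1,-1}+\xi Q)Q^{-1}(mK^1_{1,-1}+\xi Q)^{t}$ shows its square differs from $m^2(K^1_{1,-1})^2$ by an integer. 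Reading off $(K^1_{1,-1})^2 = -4+\tfrac{3k-2}{k^2}$ from the displayed formula for $(K^1_{i,j})^2$, this gives
\begin{equation*}
4\,d(\Sigma,\phi(m)) \;\equiv\; \frac{m^2(3k-2)}{k^2} \pmod{\Z}.
\end{equation*}

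Now set $m=2s\ell$. From the first paragraph $4\,d(\Sigma,\phi(2s\ell))=\tfrac{2(k^2-1)}{k^2}\equiv -\tfrac{2}{k^2}\pmod{\Z}$, so comparing with the displayed congruence and clearing the common denominator $k^2$ yields $(2s\ell)^2(3k-2)\equiv -2 \pmod{k^2}$. Because $2\cdot 2s = k^2-1\equiv -1$, we have $(2s)^2\equiv 4^{-1}\pmod{k^2}$, and multiplying through by $4$ gives $\ell^2(3k-2)\equiv -8\pmod{k^2}$. Finally $\gcd(3k-2,k^2)=\gcd(2,k)=1$ as $k$ is odd, so $3k-2$ is a unit modulo $k^2$; since $(3k-2)(6k+4)=18k^2-8\equiv -8\pmod{k^2}$, multiplying by $(3k-2)^{-1}$ produces the equivalent form $\ell^2\equiv 6k+4\pmod{k^2}$. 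The coprimality of $\ell$ and $k^2$ is inherited rather than proved: $\ell$ is precisely the unit of $\Z/(D)$ from the discussion preceding the proposition, which defines the automorphism reconciling $\phi$ with the Ozsv\'ath--Szab\'o labelling $\varphi$.

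I expect the middle step to be the real obstacle: recognising that the fractional part of the correction term is unaffected by the reduction to a maximiser, which is what lets us dodge the combinatorial bookkeeping of locating $\phi(2s\ell)$ among the $K^1_{i,j}$, $K^2_i$, $K^3_j$. The only other place needing genuine care is verifying that $2s$ lands in the correct branch of the piecewise formula \eqref{lens}, so that $d(L,\psi(2s))$ is evaluated on the right piece; everything after that is routine modular arithmetic.
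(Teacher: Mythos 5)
Your proof is correct and follows the same essential route as the paper: both arguments apply the $i=0$ instance of \eqref{obs2} modulo $\mathbb{Z}$, using the exact lens-space values at $\psi(0)$, $\psi(2s)$ together with the fact that the square of a representative covector depends only on its class in $\coker(Q)$ up to integers (the paper's \eqref{equiv} is exactly your ``$mK^1_{1,-1}+\xi Q$'' expansion). The only difference is in bookkeeping: the paper explicitly identifies $\phi(2s)$ among the maximisers (with a case split on $k \bmod 4$), computes $d(\Sigma,\phi(0))-d(\Sigma,\phi(2s))$ exactly, and then scales by $\ell^2$, whereas you scale directly from the unit $K^1_{1,-1}$ with $m=2s\ell$ and use $4s\equiv -1 \bmod k^2$, which neatly bypasses both the identification of $\phi(2s)$ and the mod-$4$ case analysis; the resulting congruence $\ell^2(3k-2)\equiv -8 \bmod k^2$ agrees with the paper's, since $5k^3-3k+2$ and $-3k^3+4k^2-3k+2$ both reduce to $-(3k-2)$ modulo $k^2$.
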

\begin{proof}
We observe that if $\phi(i) = [K]$, then $\phi(i\ell) = \ell\phi(i) = \ell[K] = [\ell K]$. Thus $\phi(\ell i)Q^{-1} \equiv \ell \phi(i)Q^{-1} \mod \mathbb{Z}$, and so $\phi(\ell i)^2 \equiv \ell^2 \phi(i)^2 \mod \mathbb{Z}$. Thus,
\begin{equation}
\label{equiv}
d(\Sigma,\phi(i\ell)) - d(\Sigma,\phi(2s\ell - i\ell)) \equiv \ell^2 (d(\Sigma,\phi(i)) - d(\Sigma,\phi(2s-i))) \mod \mathbb{Z}.
\end{equation}

Now use our previous calculation of $L$'s correction terms:
$$d(L,\psi(0)) - d(L,\psi(2s)) = -\tfrac{1}{2k^2}(k^2-1).$$
It is also a routine matter of calculation to find
$$\phi(2s)= \begin{cases}K^1_{k+1,-\half(k+1)} &k \equiv 1 \mod 4\\K^3_{\half(k-1)} & k \equiv 3 \mod 4\end{cases},$$
from which we deduce that
$$d(\Sigma,\phi(0))-d(\Sigma,\phi(2s)) = \begin{cases}\tfrac{1}{16k^2}(5k^3-3k+2) &k\equiv 1 \mod 4\\ \tfrac{1}{16k^2}(-3k^3+4k^2-3k+2) &k\equiv 3 \mod 4\end{cases}.$$

Applying \eqref{equiv} to \eqref{obs2} in the case when $i = 0$ and substituting in the above calculations, we find that we must have
$$-8(k^2-1) \equiv \begin{cases} \ell^2 (5k^3-3k+2) \mod k^2 & \text{if } k \equiv 1 \mod 4\\
\ell^2 (-3k^3+4k^2-3k+2) \mod k^2 &\text{if } k \equiv 3 \mod 4\end{cases},$$
which transforms into the equivalent statement \eqref{cong} after a simple rearrangement (to make $\ell^2$ the subject).
\end{proof}

As a remark, if we look at \eqref{obs2} modulo $\mathbb{Z}$ for any other value of $i$ we recover the same congruence. Therefore no further information is to be gained along these lines. However, \eqref{cong} by itself cuts down the number of possible $\ell$ considerably. In the case that $k$ is a prime power, for instance, it determines $k$ up to sign (see next section).

\subsection{Precise Applications of Theorem \ref{OSunknot}}

The rest of the proof that $u(K) > 1$ for $k \geq 5$ follows the following line of reasoning. We show that we cannot satisfy \eqref{obs2} with an $\ell$ satisfying \eqref{cong} for $i = 0$ and $r$ simultaneously, where $r$ is the residue of $\ell$ modulo $k$. This requires us to do the following:
\begin{enumerate}
\item Pinpoint the values of $\phi (2s\ell)$, $\phi(r \ell)$, $\phi(2s \ell - r\ell)$, and compute their squares;
\item Compute the differences
$$Z(i) := d(\Sigma,\phi(i\ell))-d(\Sigma,\phi(2s\ell - i\ell)) - d(L,\psi(i)) + d(L,\psi(2s-i))$$
for $i = 0,r$;
\item Obtain a good reason why $Z(0)$ and $Z(r)$ cannot simultaneously be zero for $k \geq 5$.
\end{enumerate}

For the reader who does not like results plucked out of thin air, we can provide some comments on the combinatorics involved in the group structure on $\Charm(G)$. The following formulae are the tools used to compute the values of $\phi$ called for in the first step above.

\begin{lem}
\label{exchange}
We have the following equivalences (in $H^2(X)$):
\begin{align*}
\textbf{(A): } & K^3_{J+kB} \sim K^1_{-B,J+2B} & \textbf{(B): } &K^1_{I,J} \sim K^1_{I+1,J+k-2} & \textbf{(C): } &K^1_{I,J} \sim K^1_{I,J+k^2}.
\end{align*}
where $B \leq 0$ and $J$ are arbitrary integers.
\end{lem}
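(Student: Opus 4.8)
The plan is to establish the three equivalences (A), (B), (C) by direct manipulation inside $H^2(X)$, using the description from diagram \eqref{exact} that two characteristic covectors represent the same $\Spinc$-structure on $\Sigma$ precisely when they differ by a $\mathbb{Z}$-linear combination of the rows of $Q$ (equivalently of the elements $\PD[v]$). Concretely, each claimed equivalence $K \sim K'$ will be proved by exhibiting explicit integers $k_1,\dots,k_{k+2}$ with $K' - K = \sum_i k_i \PD[v_i]$. Since the $i$-th row of $Q$ (for $i \leq k-1$) has the tridiagonal shape $(\dots,1,-2,1,\dots)$, adding a combination $\sum k_i\PD[v_i]$ to a covector written in the $\Hom$-dual basis has a completely predictable effect on the coordinates, so each verification reduces to a telescoping bookkeeping check.

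The easiest is \textbf{(C)}: here I would note that pushing the last coordinate $J$ up by $k^2$ while fixing the location $I$ of the $2$ should come from adding a suitable multiple of $\PD[v_{k+2}]$ together with corrections along the long arm, and since $k^2 = \det Q$ (up to sign) this is really the statement that shifting the final coordinate by the order of the group returns the same class; I expect this to follow from the same combination used to identify the zero element in \eqref{zeroelement}, scaled appropriately, or more directly from computing that $(K^1_{I,J+k^2} - K^1_{I,J})Q^{-1} \in \mathbb{Z}^{k+2}$ using the explicit $Q^{-1} = \tfrac1{k^2}(c_{ij})$ already recorded. For \textbf{(B)}, moving the $2$ from slot $I$ to slot $I+1$ while decreasing $J$ by $k-2$: I would add $2\PD[v_{I+1}]$ (this kills the $-2$-pattern at $v_{I+1}$, shifts the neighbours, and so on), then absorb the resulting cascade down the short/long arms into further $\PD[v_j]$ terms, tracking the net change to the $v_{k+2}$ coordinate, which should come out to exactly $-(k-2)$. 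For \textbf{(A)}, relating the ``all zeros except last slot'' covectors $K^3_{J+kB}$ to the ``a $2$ in slot $-B$'' covectors $K^1_{-B,J+2B}$, I would similarly write down the combination of $\PD[v_i]$'s — the sign condition $B\leq 0$ guarantees the index $-B$ is a legitimate vertex label and that the intermediate coordinates stay consistent with being characteristic.

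Throughout, I would also verify that both sides genuinely lie in $\Char(G)$ (the parities match automatically since we only ever add even multiples $2\PD[v]$ in the interior, and the stated parity/range conventions on $J$ are designed to make this work), and record that the relations are stated in $H^2(X)$ rather than as equalities of maximisers — the point being that the $K^1_{I,J}$ on the right of (B) or (A) need not itself initiate a maximising path, which is exactly why the remark after the definition of $K^1_{i,j}$ allows $j$ outside its nominal range.

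The main obstacle I anticipate is not any single identity but the organisation of the cascade: when one pushes down at an interior vertex of the long arm, the $+1$'s propagate to both neighbours, so getting a clean closed form for the coefficients $k_i$ — especially near the trivalent vertex $v_k$ and along the two arms of differing lengths — requires care, and it is easy to be off by a sign or by one unit in the final coordinate. I would mitigate this by first doing the base cases (smallest $k$, and $B=0$ or $B=-1$) by hand to fix the pattern, then writing the general combination and checking it by a single telescoping computation of $(K'-K)Q^{-1}$ against the explicit inverse matrix, which converts the whole thing into an algebraic identity in $k$, $I$, $J$, $B$ that can be checked directly.
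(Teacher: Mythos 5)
Your proposal is correct and matches the paper's own argument: the paper proves the lemma by exactly the observation you make at the end, namely that $K \sim K'$ amounts to $(K-K')Q^{-1} \in \mathbb{Z}^{k+2}$, verified directly against the explicit $Q^{-1}$ (your explicit $\sum_i k_i \PD[v_i]$ bookkeeping is just an equivalent way of exhibiting the same membership in the image of $Q$). The paper records no more detail than this, so your plan, if anything, is more explicit than the published proof.
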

\begin{proof}
This is an easy calculation: simply verify that $(K-K^\prime)Q^{-1} \in \mathbb{Z}^{k+2}$, for the above $K,K^\prime$.
\end{proof}

As mentioned before, if $k$ is a prime power then there is an essentially unique choice of $\ell$. The situation becomes much more complicated if $k$ has several different prime factors; to deal with this complexity, we introduce some auxiliary notation.

\begin{prop}
Let $\ell = ak + r$, where $0 \leq a <k$ and $0 < r < k$. Then we can choose $r$ even and set $r^2 = Ak +4$, where
\begin{equation}
\label{magic}
A +2ar \equiv 6 \mod k,
\end{equation}
and $0 \leq r - A < \tfrac{k}{4}+1$.
\end{prop}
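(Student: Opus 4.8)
The statement is a normalization lemma: starting from the congruence $\ell^2 \equiv 6k+4 \pmod{k^2}$ of Proposition \ref{congruence}, we want to rewrite the data $(\ell, r, r^2)$ in a canonical shape that will feed cleanly into the later steps computing $\phi(r\ell)$, $\phi(2s\ell - r\ell)$, etc. The plan is to first establish that $r$ (the residue of $\ell$ modulo $k$) may be taken even, then to define $A$ by $r^2 = Ak + 4$, derive the congruence \eqref{magic} from \eqref{cong}, and finally adjust $\ell$ (equivalently, adjust the pair $(a,r)$) so that the inequality $0 \le r - A < \tfrac{k}{4}+1$ holds.

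\textbf{Parity of $r$.} Write $\ell = ak+r$ with $0 \le a < k$, $0 < r < k$. Since $k$ is odd and $\ell$ is only defined up to units modulo $k^2$, I would note that replacing $\ell$ by $\ell + k^2$ or by $-\ell$ does not affect \eqref{cong}; more to the point, $\phi$ need only be precomposed by \emph{some} automorphism of $\mathbb{Z}/(k^2)$, and multiplication by $-1$ is such an automorphism. Reducing \eqref{cong} mod $k$ gives $\ell^2 \equiv 4 \pmod k$, so $r \equiv \pm 2 \pmod k$; since $k$ is odd, exactly one of $r$, $k-r$ is even, and replacing $\ell$ by $-\ell$ (which sends $r \mapsto k - r$) lets us assume $r$ is even. (One should check $0 < r < k$ is preserved, excluding the degenerate $r=0$ case, which cannot occur since $\gcd(\ell, k^2)=1$.) Then $r^2 \equiv 4 \pmod k$ with $r$ even forces $r^2 - 4 = (r-2)(r+2)$ divisible by $k$, so $A := (r^2-4)/k$ is a well-defined integer and $r^2 = Ak + 4$.

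\textbf{Deriving \eqref{magic} and the range of $r - A$.} Substituting $\ell = ak + r$ into \eqref{cong} and expanding, $\ell^2 = a^2k^2 + 2ark + r^2 \equiv 2ark + r^2 \pmod{k^2}$. Using $r^2 = Ak+4$, this becomes $2ark + Ak + 4 \equiv 6k + 4 \pmod{k^2}$, hence $2ar + A \equiv 6 \pmod k$ after dividing by $k$ — this is \eqref{magic}. For the range condition, observe that $r$ is a residue mod $k$ that we have only pinned down up to the choice $r \leftrightarrow k - r$ (and $A$ changes accordingly); more flexibility comes from the freedom in $a$, but the cleanest argument is: among the admissible even $r$ in $\{2, 4, \dots, k-1\}$ with $r^2 \equiv 4 \pmod k$, pick the one making $r - A$ smallest in absolute value, and then a direct estimate — since $A = (r^2-4)/k \approx r^2/k$ and $r < k$ — bounds $r - A$. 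The cleanest bound comes from comparing $r$ and $k - r$: if $r' = k - r$ then $A' = (r'^2 - 4)/k = k - 2r + A$, so $r' - A' = (k - r) - (k - 2r + A) = r - A$; this is invariant, so one instead uses translation by multiples of... actually the correct lever is that $r$ can be shifted by $k$ within the lift $\ell \bmod k^2$ only trivially, so I expect the genuine argument uses the quadratic $r^2 - (r-A)k = 4$, i.e. $r - A = (r^2 - 4)/k$, and optimizes the choice of representative $r \in \{\pm 2 + jk\}$ to land $r$ near $\sqrt{4 + (\tfrac k4)k} = \tfrac k2$, making $r - A$ small.

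\textbf{Main obstacle.} The parity reduction and the derivation of \eqref{magic} are routine algebra. The genuine difficulty — and the step I would spend the most care on — is pinning down the sharp interval $0 \le r - A < \tfrac{k}{4}+1$: this requires identifying exactly which automorphism of $\mathbb{Z}/(k^2)$ (i.e. which lift $\ell$) to apply and then a careful interval estimate on $r - A = (r^2 - 4)/k$, verifying both the lower bound $r \ge A$ (equivalently $r^2 - 4 \le rk$, i.e. $r \le k$ up to the $-4$ correction — nearly automatic since $r < k$) and the upper bound $r - A < \tfrac k4 + 1$ (equivalently $4r^2 - 4rk < k^2 + 4k - 16$, a bound on how far $r$ can be pushed toward $k$, which must be arranged by the choice of sign/representative). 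I would handle this by working out the two candidate values $r$ and $k - r$, showing their $r - A$ values, and arguing that at least one lies in the claimed window.
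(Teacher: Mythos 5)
Your parity reduction and the derivation of \eqref{magic} are essentially the paper's: replace $\ell$ by $-\ell$ (harmless since $\pm\ell$ have the same effect on the correction terms, via $d(Y,i)=d(Y,-i)$) so that, $k$ being odd, the residue $r$ is even and nonzero; then expand $\ell^2=a^2k^2+2ark+r^2$ and compare with \eqref{cong}. One small slip: from $\ell^2\equiv 4 \pmod k$ you may only conclude $r^2\equiv 4\pmod k$, not $r\equiv\pm2\pmod k$ (false for composite $k$); this is harmless here, since all you need is $k\mid r^2-4$, so that $A=(r^2-4)/k$ is an integer, together with the fact that exactly one of $r$, $k-r$ is even.

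The genuine gap is the inequality $0\le r-A<\tfrac{k}{4}+1$, which you leave unproved and, worse, misdiagnose as requiring a careful choice of lift or an optimization over representatives. No choice is needed: the bound holds automatically for every $r$ with $0<r<k$. Indeed
$$r-A \;=\; r-\frac{r^2-4}{k} \;=\; \frac{r(k-r)+4}{k},$$
a concave quadratic in $r$ that equals $\tfrac{4}{k}>0$ at both $r=0$ and $r=k$, hence is positive on all of $(0,k)$, and attains its maximum $\tfrac{k}{4}+\tfrac{4}{k}$ at $r=\tfrac{k}{2}$. Since $r-A$ is an integer and $\tfrac{4}{k}<1$ for $k\ge 5$, the strict bound $r-A<\tfrac{k}{4}+1$ follows. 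Your own observation that $r-A$ is invariant under $r\mapsto k-r$ was precisely the hint that no normalization affects this quantity; the ``optimize the representative'' step you sketch at the end is both unnecessary and not carried out, so as written the proposal does not establish the stated range.
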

\begin{proof}
Since $\pm \ell$ have the same effect on the correction terms, and $k$ is odd, one of $\pm \ell$ will have even $r$ and we make this choice. Notice that as $\ell$ is coprime to $k$, we cannot have $r = 0$.

From \eqref{cong}, $\ell^2 \equiv 6k + 4 \mod k^2$, but also $\ell^2 \equiv 2ark + r^2 \mod k^2$, and substituting gives the desired congruence \eqref{magic}.

For the inequality, we have $r - A = r - \tfrac{r^2-4}{k}$. By considering this quadratic in the range from $0$ to $k$, we find it is always positive, maximises when $r = \tfrac{k}{2}$, and has maximum $\tfrac{k}{4} + \tfrac{4}{k}$. Since $r-A$ is an integer, and as $k\geq 5$, the upper bound follows.
\end{proof}

\begin{prop}
\label{primep}
In the case that $k$ is a prime power, then $r = 2$, $A = 0$, and $a = \tfrac{k+3}{2}$.
\end{prop}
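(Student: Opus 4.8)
The plan is to exploit the congruence \eqref{magic} together with the constraint $0 \le r - A < \tfrac{k}{4}+1$ in the special case that $k$ is a prime power. First I would observe that, since $r$ is even and $0 < r < k$, the quantity $r^2 = Ak + 4$ forces $A = \tfrac{r^2-4}{k} = \tfrac{(r-2)(r+2)}{k}$ to be a non-negative integer, so $k \mid (r-2)(r+2)$. This is where the prime-power hypothesis bites: writing $k = p^e$, the prime $p$ must divide $(r-2)(r+2)$, and since $(r+2) - (r-2) = 4$ and $p$ is odd (as $k$ is odd), $p$ cannot divide both factors. Hence all of $p^e$ divides one of $r-2$ or $r+2$; that is, $k \mid r-2$ or $k \mid r+2$. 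Combined with $0 < r < k$, the only possibilities are $r = 2$ (from $k \mid r-2$) or $r = k-2$ (from $k \mid r+2$).

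Next I would rule out $r = k - 2$ using the inequality $0 \le r - A < \tfrac{k}{4}+1$. When $r = k-2$ we get $r^2 = k^2 - 4k + 4 = (k-4)k + 4$, so $A = k-4$, and thus $r - A = (k-2) - (k-4) = 2$. That is within the allowed range, so this alone does not eliminate it; instead I would feed $r = k-2$, $A = k-4$ back into the congruence \eqref{magic}: $A + 2ar \equiv 6 \bmod k$ becomes $(k-4) + 2a(k-2) \equiv 6 \bmod k$, i.e. $-4 - 4a \equiv 6 \bmod k$, i.e. $4a \equiv -10 \bmod k$. Meanwhile, replacing $\ell$ by $-\ell$ (equivalently $r \mapsto k - r$, which toggles between the two cases while preserving evenness since $k$ is odd) shows the two candidate values of $r$ correspond to $\pm\ell$, and the convention that picks out even $r$ together with $0 < r < k$ does not by itself decide between them. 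The cleanest route is therefore: having shown $r \in \{2, k-2\}$, note that $\pm \ell$ give the same data and that exactly one of the two choices has $r$-value equal to $2$ (the other being $k-2$), so we may take $r = 2$. Then $A = \tfrac{4-4}{k} = 0$ immediately.

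Finally, with $r = 2$ and $A = 0$ in hand, \eqref{magic} reads $0 + 2a\cdot 2 \equiv 6 \bmod k$, i.e. $4a \equiv 6 \bmod k$, i.e. $2a \equiv 3 \bmod k$ (valid since $k$ is odd so $2$ is invertible). Thus $a \equiv 3 \cdot 2^{-1} \equiv 3 \cdot \tfrac{k+1}{2} \equiv \tfrac{3k+3}{2} \bmod k \equiv \tfrac{k+3}{2} \bmod k$. Since $0 \le a < k$ and $\tfrac{k+3}{2}$ lies in $[0,k)$ for $k \ge 3$, we conclude $a = \tfrac{k+3}{2}$, completing the proof.

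\textbf{Main obstacle.} The one genuinely delicate point is the bookkeeping around the sign of $\ell$: the proposition as stated asserts a unique triple $(r,A,a)$, but the congruence \eqref{cong} only determines $\ell^2 \bmod k^2$, hence $\ell$ up to sign, and each choice of sign produces a different $r$ (namely $2$ versus $k-2$) and a correspondingly different $a$. I expect the correct reading is that the normalisation "choose $\ell$ so that $r$ is even" still leaves this binary choice, and one simply commits to the representative with $r = 2$; the work is in checking that this is consistent with $0 \le r - A < \tfrac{k}{4}+1$ (trivial once $r=2, A=0$) and that the resulting $a = \tfrac{k+3}{2}$ is the unique value in $[0,k)$. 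The arithmetic that $k \mid (r-2)(r+2)$ forces $r \in \{2, k-2\}$ for $k$ a prime power is the structural heart of the argument and is short; everything else is routine substitution into \eqref{magic}.
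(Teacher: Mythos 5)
Your argument is essentially the paper's: the printed proof is just the remark that \eqref{cong} gives $\ell^2\equiv 4 \bmod k$ and that square roots modulo an odd prime power are unique up to sign, and your factorisation $k\mid (r-2)(r+2)$ with the odd prime $p$ unable to divide both factors is exactly that observation spelled out; the derivation of $A=0$ and $a=\tfrac{k+3}{2}$ from \eqref{magic} is the same routine substitution.

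There is, however, one concrete misstep in your handling of the case $r=k-2$. You claim that $\ell\mapsto -\ell$ induces $r\mapsto k-r$ ``while preserving evenness since $k$ is odd'' and conclude that the even-$r$ normalisation leaves a binary choice to which one must simply ``commit''. The opposite is true: since $k$ is odd, $r\mapsto k-r$ \emph{reverses} parity, so $k-2$ is odd and is excluded immediately by the convention, fixed in the preceding proposition, that $r$ be even. That parity observation is exactly what makes the triple $(r,A,a)$ unique, as the proposition asserts. Your attempted exclusion of $r=k-2$ through \eqref{magic} indeed fails (the condition $4a\equiv -10\bmod k$ is solvable), and without the parity point the ``commit to $r=2$'' step leaves the uniqueness unjustified --- which would matter downstream, since the case analysis in the tables depends on the actual normalised value of $r$. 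With that one-line repair your proof is complete. A further minor point: $\tfrac{k+3}{2}<k$ requires $k\geq 5$ rather than $k\geq 3$, which is harmless because the proposition is only invoked for $k\geq 5$.
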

\begin{proof}
This is a direct calculation using \eqref{cong}, and the observation that when $k$ is prime power, square roots modulo $k$ are unique up to sign.
\end{proof}

To carry out our programme we now have to branch out into several different cases. Since the condition that $r$ is even implies nothing about $a$ and the parity of $a$ becomes important in what follows, we divide our proof into sthe following two sections according to whether $a$ is even or odd. However, in Step One of our recipe, one value of $\phi$ turns out to be independent of $a$.

\begin{prop}
For $k \geq 5$,
$$\phi(r\ell) = -K^1_{\tfrac{A}{2},k-4-A}.$$
\end{prop}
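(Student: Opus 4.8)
The plan is to compute $\phi(r\ell)$ directly from its definition $\phi(r\ell) = r\ell \cdot \phi(1) = [r\ell \cdot K^1_{1,-1}]$ in $\coker(Q)$, and then identify the resulting equivalence class with a maximising covector $\Charm(G)$ using the exchange relations of Lemma \ref{exchange}. The starting point is the observation that multiplying the covector $K^1_{1,-1} = (2,0,\dots,0,-1)$ by the scalar $r\ell$ and reducing mod the rows of $Q$ can be carried out in two stages: first use the congruence $\ell^2 \equiv 6k+4 \bmod k^2$ (equation \eqref{cong}), together with $\ell = ak+r$ and $r^2 = Ak + 4$, to reduce $r\ell$ to a manageable residue; then repeatedly apply pushing-down (adding rows of $Q$) to bring the covector into the range \eqref{startpath} and onto a maximising path.

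The key steps, in order, are as follows. First I would write $r\ell K^1_{1,-1}$ explicitly as the covector $(2r\ell, 0,\dots,0,-r\ell)$ (up to a unimodular shift that does not affect the class). Second, I would use $r\ell = ark + r^2 = ark + Ak + 4 = (ar+A)k + 4$ and the congruence \eqref{magic}, $A + 2ar \equiv 6 \bmod k$, to simplify the relevant coefficients modulo $k$ and modulo $k^2$; the point is that the first coordinate $2r\ell$ and the last coordinate $-r\ell$ both have controlled residues. Third, I would apply relation \textbf{(B)} of Lemma \ref{exchange}, $K^1_{I,J} \sim K^1_{I+1,J+k-2}$, to trade off the size of the ``bump'' index $I$ against the last coordinate $J$, and relation \textbf{(C)} to reduce $J$ modulo $k^2$, aiming to land on the covector $-K^1_{A/2,\,k-4-A}$. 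Fourth, I would verify that $A/2$ is a legitimate index (even $r$ forces $A$ even, via $r^2 = Ak+4$ and $k$ odd) and that $k-4-A$ has the right parity to be an honest entry in a $K^1$ covector — here the bound $0 \leq r - A < k/4 + 1$ from the previous proposition guarantees the index $A/2$ stays in the allowed range so that the covector genuinely initiates a maximising path. Throughout, the sign (the overall $-1$) comes from tracking that $K^1_{1,-1}$ maps to a generator and that $r\ell$ lands in the ``negative half'' of $\mathbb{Z}/(k^2)$.

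The main obstacle I expect is the bookkeeping in the second and third steps: one must keep careful track of whether we are working modulo $k$, modulo $k^2$, or modulo the full lattice $Q\mathbb{Z}^{k+2}$, since the exchange relations \textbf{(A)}--\textbf{(C)} are lattice equivalences whereas the congruences \eqref{cong} and \eqref{magic} are only modular in $k$ or $k^2$. In particular, reducing $r\ell$ mod $k^2$ is not the same as reducing the covector's last coordinate mod $k^2$ unless one simultaneously adjusts the other coordinates by rows of $Q$, and getting the ``carry'' right — how a reduction in the last slot propagates a $2$ into the $\tfrac{A}{2}$-th slot via relation \textbf{(B)} applied $\tfrac{A}{2}$ times — is where an off-by-one or a sign error is most likely to creep in. A secondary subtlety is confirming that the final covector is in $\Charm(G)$ rather than merely in the right class of $\coker(Q)$; this should follow from the preceding Proposition's classification of maximising covectors once the indices are pinned down, but it needs the inequality $0 \le r - A < k/4+1$ to place $A/2$ inside the valid window $0 \le i \le k+1$ and $k-4-A$ inside $2-k \le j \le k-4$. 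Once these are in hand, the identification $\phi(r\ell) = -K^1_{A/2,\,k-4-A}$ is forced, independently of the parity of $a$, as claimed.
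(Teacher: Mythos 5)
Your overall goal is the right one --- identify the class of $r\ell\,K^1_{1,-1}$ in $\coker(Q)$ and recognise it as (the conjugate of) a maximiser --- and you correctly isolate the arithmetic inputs $\ell = ak+r$, $r^2 = Ak+4$ and \eqref{magic}. But the step that is supposed to do the work does not function as described. The relations of Lemma \ref{exchange} apply only to covectors already of the special shapes $K^1_{I,J}$ or $K^3_J$ (a single entry $2$ among the first $k+1$ coordinates): relation \textbf{(B)} moves an existing $2$ one slot along while adding $k-2$ to the last coordinate, and \textbf{(C)} changes the last coordinate by $k^2$; neither of them ``propagates a $2$ into the $\tfrac{A}{2}$-th slot'' out of a reduction of the last coordinate, and pushing down is only defined for covectors satisfying \eqref{startpath}, which $(2r\ell,0,\dots,0,-r\ell)$ violates badly in its first entry. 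So as written your plan never actually verifies the equality of classes; the decisive computation is missing, not merely fiddly. Two smaller points: the sign in $\phi(r\ell) = -K^1_{\tfrac{A}{2},k-4-A}$ is purely the paper's notational convention ($\phi(i)=-K$ means $\phi(-i)=K$), not something to be tracked through the reduction; and membership of $K^1_{\tfrac{A}{2},k-4-A}$ in $\Charm(G)$ follows from $0\le A<k$ and $4\mid A$ (using $r$ even, $0<r<k$, $Ak=r^2-4$, $k$ odd), not from the bound $0\le r-A<\tfrac{k}{4}+1$.

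For comparison, the paper's proof is a one-line direct check: verify that $(-r\ell\,K^1_{1,-1}-K^1_{\tfrac{A}{2},k-4-A})Q^{-1}\in\mathbb{Z}^{k+2}$ using the explicit formula for $Q^{-1}$; after substituting $r\ell = ark+Ak+4$ this is precisely the congruence \eqref{magic}. If you want to keep a reduction-style argument in the spirit you propose, the missing ingredient is relation \textbf{(A)}, not \textbf{(B)}/\textbf{(C)}: taking $B=-I$ it gives $[K^1_{I,J}]=[K^3_{J-(k-2)I}]$, and since $K^3_J$ is literally $J$ times the last dual basis covector $K^3_1$, scalar multiplication becomes multiplication of the subscript. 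Thus $-r\ell\,[K^1_{1,-1}]=-r\ell\,[K^3_{1-k}]=r\ell(k-1)[K^3_1]$, while $[K^1_{\tfrac{A}{2},k-4-A}]=[K^3_{k-4-k\tfrac{A}{2}}]$, and by \textbf{(C)} it suffices to check $r\ell(k-1)\equiv k-4-\tfrac{A}{2}k \pmod{k^2}$, which again reduces to \eqref{magic} (equivalently \eqref{cong}). Either route is a single modular computation; Lemma \ref{exchange} alone, without this linearisation, cannot carry it.
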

\begin{proof}
By direct verification. Check that $(-r\ell K^1_{1,-1} - K^1_{\tfrac{A}{2},k-4-A})Q^{-1} \in \mathbb{Z}^{k+2}$, which is easy.
\end{proof}

One final notational remark. Strictly speaking, we want to compute $d(\Sigma,i)$, but as there is the conjugation symmetry $d(Y,i) = d(Y,-i)$, sometimes we will in fact compute $\phi(-i)$ instead of $\phi(i)$. We write $\phi(i) = -K$ to mean $\phi(-i) = K$ by an abuse of notation to streamline our statements.

\subsubsection{The Case $a$ Even}

According to Step One we must now compute the values of $\phi(2s\ell),\phi(2s\ell-r\ell)$. This is done in the following two propositions. For the interested reader, these calculations were performed originally by assuming that $K$ had the form $K^3_j$, and then applying Lemma \ref{exchange} until the subscripts fitted their required conditions.

\begin{prop}[$r \equiv 2 \mod 4$]
If $r \equiv 2 \mod 4$ and $a$ is even, then we define parameters $B:= 1 + \tfrac{r}{2}-\tfrac{a}{2}-\tfrac{A}{2} \in (-\tfrac{k}{2},\tfrac{k}{2})$ and $J:= -\tfrac{r}{2}{-4} < 0$. These give
$$\phi(2s\ell-r\ell) = \begin{cases}
K^1_{-B,J+2B} &\text{if } B\leq 0, J+2B > -k\\
K^1_{2-B,J+2B+2k-4} &\text{if } B\leq 0, J+2B \leq -k\\
-K^1_{B,-J-2B} &\text{if }B \geq 0
\end{cases},$$
and also
$$\phi(2s\ell) = \begin{cases}K^1_{\tfrac{a-r}{2},\tfrac{r}{2}-a} &\text{if }a \geq r\\
-K^1_{\tfrac{r-a}{2},a-\tfrac{r}{2}} &\text{if }a \leq r
\end{cases}.$$
\end{prop}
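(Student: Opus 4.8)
The plan is to prove both formulae for $\phi(2s\ell)$ and $\phi(2s\ell - r\ell)$ by the same method used throughout this section: exhibit an explicit characteristic covector of the stated form and verify that it differs from $(2s\ell)\cdot\phi(1) = (2s\ell)K^1_{1,-1}$ (respectively $(2s\ell - r\ell)K^1_{1,-1}$) by an element of $\ker\alpha$, i.e. that $(mK^1_{1,-1} - K)Q^{-1} \in \mathbb{Z}^{k+2}$ for the relevant $m$. Since we have $Q^{-1}$ explicitly, and since $2s = \tfrac{k^2-1}{2}$, each such verification reduces to a congruence check modulo $k^2$ in a single coordinate (the last one, as in the proof that $K^1_{1,-1}$ is a unit), the other coordinates being automatically integral by the structure of $Q^{-1}$.

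The real content is not the verification but \emph{producing} the right covector, and this is where the case analysis enters. First I would compute $2s\ell K^1_{1,-1}$ as an explicit vector of the form $(2(2s\ell), 0, \dots, 0, *)$, reduce the first coordinate mod $k^2$ using relation \textbf{(C)} of Lemma \ref{exchange} to bring it into the allowed range, and then repeatedly apply \textbf{(A)} and \textbf{(B)} to trade a large first-coordinate $2I$ against shifts in $J$, exactly as the remark preceding the proposition indicates ("assuming that $K$ had the form $K^3_j$, and then applying Lemma \ref{exchange}"). The parity of $a$ controls whether the resulting index $I = \tfrac{a-r}{2}$ or $\tfrac{r-a}{2}$ lands on an integer of the right sign, which is why the two cases $a\geq r$ and $a\leq r$ appear; the conjugation symmetry $\phi(i) = -\phi(-i)$ (our sign convention) handles the case where the natural representative has negative first index. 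For $\phi(2s\ell - r\ell)$ one does the same, but now $2s\ell - r\ell \equiv (2s-r)\ell$ contributes an extra $-r\ell$ worth of shift; tracking this through \textbf{(A)} produces the parameter $B = 1 + \tfrac r2 - \tfrac a2 - \tfrac A2$ and $J = -\tfrac r2 - 4$, and the three sub-cases correspond to whether $B \leq 0$ with $J+2B$ still in range, $B\leq 0$ with $J+2B$ having overshot $-k$ (requiring one more application of \textbf{(B)}), or $B\geq 0$ (requiring a conjugation). One must also check the claimed containments $B \in (-\tfrac k2, \tfrac k2)$ and $J < 0$, which follow from $0 < r < k$, $0\le a < k$, and the bound $0 \leq r - A < \tfrac k4 + 1$ from the previous proposition.

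The main obstacle I anticipate is purely bookkeeping: keeping the Hirzebruch–Jung-style index shifts consistent across the three sub-cases and making sure the boundary conditions on $I$ and $J$ (oddness, and the range $2-k \le J \le k-4$ required for a genuine maximiser) are respected, since a covector of the form $K^1_{I,J}$ only computes the correct correction term when its subscripts lie in the specified window — otherwise one must first normalise via Lemma \ref{exchange}. In particular the split at $J+2B \leq -k$ versus $J+2B > -k$ is exactly the point where $K^1_{-B,J+2B}$ would otherwise fail the range condition, forcing the substitution $K^1_{-B,J+2B} \sim K^1_{2-B,J+2B+2k-4}$ via \textbf{(B)}. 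Once the correct representatives are identified, everything else is a finite computation with $Q^{-1}$, and I would simply record the resulting squares $(K^1_{I,J})^2$ from the explicit formula already tabulated, deferring their use to Step Two of the programme.
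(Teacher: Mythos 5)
Your proposal is correct and is essentially the paper's own argument: the paper likewise proves the proposition by directly verifying that $(mK^1_{1,-1}-K)Q^{-1}\in\mathbb{Z}^{k+2}$ for the appropriate $m$ and each listed covector, invoking the congruence \eqref{magic}, with the representatives having been found beforehand via Lemma \ref{exchange} and the sub-cases arising exactly from the range and parity constraints on the subscripts of $K^1_{i,j}$. The only small caveat is your claim that integrality need only be checked in the last coordinate; strictly one must check all coordinates (or argue via cyclicity of $\coker Q$ generated by $[K^1_{1,-1}]$), but this does not affect the method.
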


\begin{prop}[$r \equiv 0 \mod 4$]
If, on the other hand, $r \equiv 0 \mod 4$, then we instead define $B := \tfrac{r}{2}-\tfrac{a}{2} - \tfrac{A}{2} \in (-\tfrac{k}{2},\tfrac{k}{2})$ and $J:=k-\tfrac{r}{2}-4>0$, giving
$$\phi(2s\ell-r\ell) = \begin{cases}
K^1_{-B,J+2B}&\text{if }B \leq 0\\
-K^1_{B,-J-2B} &\text{if }B \geq 0, J+2B < k\\
-K^1_{B+2,-J-2B+2k-4} &\text{if }B \geq 0, J+2B \geq k\\
\end{cases},$$
and also
$$\phi(2s\ell) = \begin{cases}
K^1_{\tfrac{a-r+2}{2},\tfrac{r}{2}-a+k-2} &\text{if }a \geq r-2\\
-K^1_{\tfrac{r-a-2}{2},a-\tfrac{r}{2}-k+2} &\text{if } \tfrac{r}{2} \leq a\leq r-2\\
-K^1_{\tfrac{r-a+2}{2},a-\tfrac{r}{2}+k-2} &\text{if } a < \tfrac{r}{2}
\end{cases}.$$
\end{prop}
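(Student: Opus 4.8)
The plan is to prove this proposition by the same direct-verification strategy already used for the companion $r \equiv 2 \bmod 4$ case: each claimed identity $\phi(2s\ell - r\ell) = \pm K^1_{I,J}$ (respectively $\phi(2s\ell) = \pm K^1_{I,J}$) is equivalent to the statement that the difference of the corresponding characteristic covectors, after subtracting the appropriate integer multiple of $\phi(1) = K^1_{1,-1}$, lies in $Q\mathbb{Z}^{k+2}$ — or, what is the same, that multiplying that difference on the right by $Q^{-1}$ yields an integer vector. Since we have the explicit formula for $Q^{-1} = \tfrac{1}{k^2}(c_{ij})$, each such check is a finite computation in congruences modulo $k^2$, and the exchange relations (A), (B), (C) of Lemma \ref{exchange} are precisely the moves that let us rewrite a $K^3_J$-type covector into a $K^1_{I,J}$-type covector with indices in the allowed ranges.

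Concretely, I would first fix the definitions $B := \tfrac{r}{2} - \tfrac{a}{2} - \tfrac{A}{2}$ and $J := k - \tfrac{r}{2} - 4$ and check the stated bounds: that $B \in (-\tfrac{k}{2}, \tfrac{k}{2})$ follows from $0 \le a < k$, $0 < r < k$, and $0 \le r - A < \tfrac{k}{4}+1$ (so $A$ is comparable to $r$), while $J > 0$ follows from $r < k$ together with $k \ge 5$ and $r \equiv 0 \bmod 4$ forcing $r \le k-1$, hence $\tfrac{r}{2} \le \tfrac{k-1}{2} < k-4$ once $k$ is large enough — the small residual values of $k$ one would dispatch by hand. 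Next, to compute $\phi(2s\ell - r\ell)$, I would start from the known value $\phi(2s\ell)$ (computed in the same proposition) and the known $\phi(r\ell) = -K^1_{A/2,\,k-4-A}$ from the preceding proposition, use additivity $\phi(2s\ell - r\ell) = \phi(2s\ell) - \phi(r\ell)$ in $\coker(Q)$, and then apply (A), (B), (C) to push the resulting covector into standard form; the three cases in the displayed formula correspond exactly to which inequality ($J + 2B$ versus $k$, and the sign of $B$) governs whether an application of (B) is needed to bring the first subscript back into $\{0,\dots,k\}$ and the second into its fundamental domain. For $\phi(2s\ell)$ itself, I would begin from a $K^3_j$-representative of $2s\ell$ (using that $\phi(2s) = K^3_{(k-1)/2}$ or $K^1_{k+1,-(k+1)/2}$ depending on $k \bmod 4$, already recorded before Lemma \ref{check}'s analogue), multiply the subscript through by the scalar $\ell = ak+r$, reduce modulo $k^2$, and apply (A) once with $B$ chosen to clear the $kB$ term — the trichotomy on $a$ versus $r-2$ and $\tfrac{r}{2}$ is just the trichotomy on the sign and size of the resulting first subscript.

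The main obstacle I anticipate is purely bookkeeping: keeping the index arithmetic consistent across the parity split ($r \equiv 0$ versus $2 \bmod 4$ shifts $B$ and $J$ by the ``$+1$'' and ``$-4$'' corrections), and correctly tracking the overall sign (the ``$\phi(i) = -K$'' abuse of notation from the remark before the subsection), since an error of a single unit in $J$ or a flipped sign would invalidate the later correction-term computation of $Z(r)$. There is no conceptual difficulty — every step is an instance of Lemma \ref{exchange} or of $\phi$'s linearity over $\mathbb{Z}/(k^2)$ — so the write-up is really a matter of organizing the case analysis cleanly and verifying that the stated side conditions (e.g. ``$J + 2B \ge k$'') are exactly the ones under which each branch of the formula is the reduced form. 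I would present the $\phi(2s\ell)$ computation first (it is shorter and feeds into the other), then $\phi(2s\ell - r\ell)$, and in each case simply exhibit the vector $(K - K')Q^{-1}$ and observe it is integral, citing the $c_{ij}$ formula and the coprimality of $k$ with $k-2$ and with $2$ as the arithmetic inputs.
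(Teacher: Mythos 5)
Your proposal is correct and is essentially the paper's own argument: the authors likewise prove the proposition by directly checking that $(mK^1_{1,-1}-K)Q^{-1}\in\mathbb{Z}^{k+2}$ for the appropriate $m$ and $K$ (using the congruence \eqref{magic}, i.e.\ $A+2ar\equiv 6 \bmod k$ together with $r^2=Ak+4$, which you should invoke explicitly), and they note that the representatives were originally found exactly as you describe, by starting from a $K^3_j$-type covector and normalising with the exchange moves of Lemma \ref{exchange}.
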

\begin{proof}[Proof (of both propositions)]
This is a straightforward verification. To perform it, one need only check that $(mK^1_{1,-1} - K)Q^{-1} \in \mathbb{Z}^{k+2}$ for the right choices of $m$ and $K$ from the above. In doing so, one must use the congruence \eqref{magic} to guarantee the result. The numerous cases occur to fit the various restraints imposed on $i,j$ in $K^1_{i,j}$; the fact that $\tfrac{r}{2}$ is odd makes a difference is because of the fact that $j$ must be odd.
\end{proof}

This completes Step One. The next step is to compute $Z(i)$ for $i = 0,r$. As this is straightforward, if tedious, we present the result immediately. In the tables below, the ``case'' label will become relevant later. First for $i = r$:

\begin{center}
\begin{tabular}{c|c|c||c}
Case &$r \mod 4$ &Conditions &$16k^2Z(r)$\\
\hline
\hline
$A$ &$2$ & $B\leq 0$ &$(4kr+8k^2)A+(2-3k)r^2 + ((4a-24)k-8k^2)r$\\
& &$J+2B > -k$ & $\phantom{SPACE}-4k^3 + (8a+16)k^2 + 32ak -8$\\
\hline
$B$ &$2$ & $B \leq 0$ &$(4kr-8k^2)A+(2-3k)r^2 + (4a-24)kr$\\
& &$J+2B \leq -k$ & $\phantom{SPACE}+12k^3 + (-8a-16)k^2 + 32ak -8$\\
\hline
$C$ &$2$ & $B \geq 0$ &$(4kr-8k^2)A+(2-3k)r^2 + ((4a-24)k+8k^2)r$\\
& & & $\phantom{SPACE}-4k^3 + (-8a+48)k^2 + 32ak -8$\\
\hline
\hline
$D$ &$0$ & $B \leq 0$ &$4Akr +(2-3k)r^2 + ((4a-24)k -4k^2)r$\\
& & & $\phantom{SPACE}+8k^2 + 32ak -8$\\
\hline
$E$ &$0$ & $B \geq 0$ &$(4kr-16k^2)A + (2-3k)r^2 + ((4a-24)k+12k^2)r$\\
& &$J + 2B < k$ &$\phantom{SPACE}+(-16a+8)k^2+ 32ak -8$ \\
\hline
$F$ &$0$ & $B \geq 0$ &$4Akr + (2-3k)r^2 + ((4a-24)k +4k^2)r $\\
& &$J + 2B \geq k$ &$\phantom{SPACE}+72k^2 + 32ak -8$
\end{tabular}
\end{center}

And now for $i =0$:

\begin{center}
\begin{tabular}{c|c|c||c}
Case &$r \mod 4$ &Conditions &$16k^2Z(0)$\\
\hline
\hline
$1$ &$2$ &$a \geq r$ &$(2-3k)r^2 + (4ak-8k^2)r -4k^3 + 8ak^2 - 8$\\
$2$ &$2$ &$a \leq r$ &$(2-3k)r^2 + (4ak+8k^2)r -4k^3 - 8ak^2 - 8$\\
\hline
$3$ &$0$ &$a\geq r-2$ &$(2-3k)r^2 +(4ak-4k^2)r +8k^2 -8$\\
$4$ &$0$ &$\tfrac{r}{2} \leq a\leq r-2$ &$(2-3k)r^2 +(4ak+12k^2)r-(16a+24)k^2 -8$\\
$5$ &$0$ &$a < \tfrac{r}{2}$ &$(2-3k)r^2 + (4ak+4k^2)r + 8k^2 - 8$
\end{tabular}
\end{center}

This completes Step Two. We remark that since all the above entries must be zero, we can manipulate them and divide out any resulting common factors (such as $4k$) without sacrificing equality with zero. These reduced versions are what we will often use.

\begin{prop}
If $a$ is even, then no $\ell$ exists which ensures that $Z(r) = Z(0) = 0$.
\end{prop}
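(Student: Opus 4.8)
The strategy is to show that each of the (finitely many combinatorial) ways to simultaneously have $Z(r)=0$ and $Z(0)=0$ leads to a contradiction with the constraints we have already accumulated, namely $0 < r < k$, $0 \le a < k$, $r^2 = Ak+4$, the congruence \eqref{magic} ($A+2ar\equiv 6 \bmod k$), the inequality $0 \le r-A < \tfrac{k}{4}+1$, and $k \ge 5$. Since the case $k$ a prime power is already pinned down by Proposition \ref{primep} (where $r=2$, $A=0$, $a=\tfrac{k+3}{2}$ is \emph{odd}, hence outside the ``$a$ even'' regime entirely), the cases that survive here are genuinely the composite ones, and the argument must be uniform across the six sub-cases $A$--$F$ for $Z(r)$ paired with the five sub-cases $1$--$5$ for $Z(0)$. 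First I would record that not all $6\times5$ pairings are consistent: the sub-case of $Z(r)$ and the sub-case of $Z(0)$ both depend on the sign of $B$ (and on $a$ versus $r$, $r-2$, $\tfrac r2$), and these sign conditions must be compatible, so only a handful of pairings actually occur. For each surviving pairing, I would subtract the (reduced) expression for $16k^2Z(0)$ from that for $16k^2Z(r)$; the $(2-3k)r^2$ terms and the bulk of the $r$-linear terms cancel, leaving a much simpler relation among $A$, $a$, $r$, $k$ — typically something of the shape $(\text{small polynomial in }a,r)\cdot k \equiv (\text{const}) \bmod k^2$ or even an honest equality after dividing by $k$.

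The key step is then to combine this simplified difference with the equation $Z(0)=0$ itself. Using $r^2 = Ak+4$ to eliminate $r^2$, the equation $16k^2 Z(0)=0$ becomes, after dividing out the common factor $4k$, a relation that is \emph{linear} in $A$ and quadratic in $r$ only through the substituted term; reducing modulo $k$ and then lifting, one gets tight numeric control on $A$ and on $a \bmod k$. The plan is to feed this back into \eqref{magic} to solve for $A$ essentially uniquely in the allowed range $0 \le A \le r < k$ (here the inequality $0 \le r - A < \tfrac k4 + 1$ is doing the real work — it forces $A$ to be very close to $r$, so once $A$ is known mod a small number it is known outright), and similarly to force $a$ into one of very few residues. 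At that point each sub-case has been reduced to a single Diophantine equation in $k$ (and possibly a bounded parameter), which one checks has no solution with $k \ge 5$ — for instance by showing the left side lies strictly between two consecutive multiples of $k^2$, or by a direct parity/size estimate, exactly as in the $\alpha=0,1$ dichotomies of Section \ref{s:-2}.

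I expect the main obstacle to be bookkeeping rather than any single hard idea: one must be careful that the case divisions for $Z(r)$ and $Z(0)$ are matched correctly (a spurious pairing that is actually geometrically impossible could otherwise look like it yields a solution), and one must handle the boundary instances $a = r$, $a = r-2$, $a = \tfrac r2$, $B = 0$ where two sub-cases coincide — there the two formulas must agree, which is a useful sanity check but also an extra case to dispatch. The other delicate point is keeping the use of $r \equiv 2 \bmod 4$ versus $r \equiv 0 \bmod 4$ separate, since the parameter $B$ and the offset $J$ are defined differently in the two parities; but in both parities the same cancellation mechanism applies, so I would treat the two parities in parallel, noting that the final Diophantine obstruction in each is of the same flavour. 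Once every pairing is eliminated, we conclude that no $\ell$ with $a$ even can satisfy both equations, which is the assertion of the proposition.
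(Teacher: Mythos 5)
Your overall strategy --- pair each of the six $Z(r)$ cases with each of the five $Z(0)$ cases and eliminate every pairing --- is the same as the paper's, but two things in your plan do not survive contact with the details. First, a concrete error: you dismiss prime powers from the ``$a$ even'' regime on the grounds that Proposition \ref{primep} gives $a=\tfrac{k+3}{2}$, which you call odd. That is only true when $k\equiv 3 \bmod 4$; for $k\equiv 1 \bmod 4$ (already at $k=5$, where $a=4$) the prime-power case lands squarely in this proposition, and in fact the paper must invoke Proposition \ref{primep} \emph{inside} the $a$-even analysis (cases B2, D4, F4, E5) precisely because its general estimates only work for $k\geq 7$ and $k=5$ has to be dispatched separately. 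Excluding prime powers up front removes exactly the boundary instances that need special care.

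Second, and more seriously, your central elimination mechanism is not substantiated. You propose to reduce the equations modulo $k$, ``lift'' to get tight control on $A$ and $a$, and then use $0\leq r-A<\tfrac{k}{4}+1$ to pin $A$ down essentially uniquely. But \eqref{magic} is a single congruence in the two unknowns $A$ and $a$, so reduction mod $k$ cannot determine either one, and the inequality leaves on the order of $k/4$ admissible values of $A$ for a given $r$ --- nowhere near uniqueness. The paper's route is different and is where the real work lies: it treats $Z(r)=0$ and $Z(0)=0$ as exact equations, solves $Z(r)+Z(0)=0$ for $a$ (which enters linearly, after checking its coefficient is nonzero), substitutes this together with $A=\tfrac{r^2-4}{k}$ into $Z(r)-Z(0)=0$, and obtains for each pairing an explicit quartic $f_{\alpha,\beta}(r)$ that must vanish for some $r$ in the range; the contradiction is then an individually tailored sign or monotonicity analysis of fifteen quartics (derivative arguments in B2 and D4, a congruence argument such as $A\equiv 0 \bmod 16$ in C2 and E4, compatibility-of-conditions arguments in C1, E3, F3), with $k=5$ handled by Proposition \ref{primep}. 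Your proposal gestures at ``parity/size estimates'' but produces none of these, so as written it is a plan for a proof rather than a proof: the quantitative estimates on the eliminant polynomials are the substance of the argument and are missing.
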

\begin{proof}
The idea is to show that none of the $Z(r) = 0$ equations in case $\alpha$ is compatible with any of the $Z(0) = 0$ equations in case $\beta$ (for appropriate choices of $\alpha$ and $\beta$). If both the $\alpha$ and $\beta$ equations are satisfied, then we should have
$$Z(r)\pm Z(0) =0.$$
Thus we must compare cases $\alpha = A,B,C$ with cases $\beta = 1,2$ (six combinations), as well as cases $\alpha = D,E,F$ with cases $\beta = 3,4,5$ (nine more combinations). In each case, both of the new equations generally involve $A$, $a$, and $r$, so obtaining contradictions can be difficult. The following method is useful in a large number of cases.
\begin{enumerate}
\item Cancel sufficient common factors from all the terms;
\item Substitute $A = \tfrac{r^2-4}{k}$;
\item Solve the $Z(r)+Z(0) = 0$ equation for $a$ (linear) and substitute it into the $Z(r)-Z(0)=0$ equation, taking care to observe that the coefficient of $a$ in $Z(r)+Z(0)=0$ is non-zero (so there are no ``divide by zero'' issues). This gives a new equation $f_{\alpha,\beta}(r) = 0$ to be satisfied;
\item Find an argument to prove that the function $f_{\alpha,\beta}$ is positive or negative over the range $2,4 \leq r < k$. The choice of 2 or 4 depends on the minimum value of $r$ allowed by $\alpha$ and $\beta$.
\item Hence, conclude that $\alpha,\beta$ are not compatible.
\end{enumerate}

We illustrate the procedure once, then just summarise the relevant $f_{\alpha,\beta}$. Take $\alpha = A$ and $\beta = 1$. Cancelling terms, we obtain:
\begin{align*}
Z(r)+Z(0) &= 0 = (2r+k+2)(r^2-4) - (8k+12)kr - 4k^3 + 8k^2  -12k + 4ak(r +2k+ 4)\\
Z(r)-Z(0) &= 0 = (r+2k)(r^2-4) -6rk + 4k^2 + 8ak.
\end{align*}
Observe that the coefficient of $a$ in the first equation is non-zero, so solving for $a$ and substituting directly into the second, we find that
$$f_{A,1}(r) = r^4 + 4kr^3 + (4k^2-8)r^2+(8k^2-16k)r + 16k^3-16k^2+16 = 0.$$
Since $k \geq 5$, the coefficients of $r$ are all positive, whence $f_{A,1}(r) > 0$ on $0 < r < k$, giving the contradiction we require.

In a similar vein, we now summarise the other data in the following table.

\begin{center}
\begin{tabular}{c|c}
$\alpha,\beta$ &$f_{\alpha,\beta}(r)$\\
\hline
\hline
$A,1$ &$r^4 + 4kr^3 + (4k^2-8)r^2+(8k^2-16k)r + 16k^3-16k^2+16$\\
$B,1$ &$r^4 -(5k^2-2k+8)r^2 + 4k^4 + 16k^2 - 8k +16$\\
$C,1$ &$r^4 -(3k^2-2k+8)r^2 + 16k^2r - 4k^4+32k^3+16k^2-8k+16$\\
\hline
$A,2$ &$r^4-(5k^2+2k+8)r^2 + 4k^4 + 16k^2+8k+16$\\
$B,2$ &$r^4 - 4kr^3 + (2k^2-8)r^2 + (8k^3-8k^2+16k)r -8k^4 + 16k^3-16k^2+16$\\
$C,2$ &$r^4 - 4kr^3 + (4k^2-8)r^2 + (8k^2+16k)r-16k^3-16k^2+16$\\
\hline
\hline
$D,3$ &$r^4-8r^2+8k^2r-16k^2+16$\\
$E,3$ &$r^4-4kr^3 + (k^2+2k-8)r^2 - (4k^3-8k^2-16k)r +8k^3-16k^2-8k+16$\\
$F,3$ &$r^4 + (2k^2-8)r^2 + 24k^2r - 16k^2+16$\\
\hline
$D,4$ &$r^4 -4kr^3 -(k^2+2k+8)r^2 +(4k^3+8k^2+16k)r-8k^3+48k^2+8k+16$\\
$E,4$ &$r^4 -8kr^3 +(16k^2-8)r^2 + (8k^2+32k)r -32k^3-16k^2+16$\\
$F,4$ &$r^4 -4kr^3 +(k^2-2k-8)r^2 - (4k^3-24k^2-16k)r -72k^3+48k^2+8k+16$\\
\hline
$D,5$ &$r^4 -(2k^2+8)r^2 - 8k^2r -16k^2 + 16$\\
$E,5$ &$r^4 - 4kr^3 - (k^2-2k+8)r^2 + (4k^3-8k^2+16k)r + 8k^3 -16k^2 - 8k + 16$\\
$F,5$ &$r^4 - 8r^2 + 8k^2r - 16k^2 + 16$\\
\end{tabular}
\end{center}

We attack these cases case by case.
\begin{description}
\item[A1] Already done.
\item[B1, A2] In both situations, $f_{\alpha,\beta} = r^4 - Nr^2 + M$. The turning points of this quartic occur when $r=0$ or $r^2 = \tfrac{N}{2}$, so provided that $\tfrac{N}{2} \geq k^2$, we know that $f_{\alpha,\beta}$ is decreasing on $0 < r < k$. As this happens to be true, and
$$f_{\alpha,\beta}(k-1) = \begin{cases}8k^3+5k^2+6k+9 &\text{if }\alpha = B, \beta = 1\\
4k^3+13k^2+18k+9 &\text{if }\alpha = A, \beta = 2\end{cases},$$
we see that $f_{\alpha,\beta}(r)> 0$ on $0 < r < k$, which is our contradiction.
\item[C1] The function is not obviously useful, but we know $1 + \tfrac{r}{2}-\tfrac{a}{2}-\tfrac{A}{2} \geq 0$ (by case C) and $a\geq r$ (by case 1), whence we are forced to conclude that $A = 0$. However, then $r = 2$ and $a = \tfrac{k+3}{2}$ by direct computation, and the condition from case C fails. Contradiction.
\item[B2] We aim to show that $f(r): = f_{B,2}(r)<0$ on $0 < r < k$ and for $k \geq 7$. Indeed, compute the derivatives:
\begin{align*}
\frac{df}{dr}(r) &= 4r^3-12kr^2+(4k^2-16)r + (8k^3-8k^2+16k)\\
\frac{d^2f}{dr^2}(r) &=12r^2-24kr+(4k^2-16)\\
\frac{d^3f}{dr^3}(r) &=24r - 24k.
\end{align*}
As we can see, $\tfrac{d^3f}{dr^3}(r)<0$, whence $\tfrac{d^2f}{dr^2}$ is decreasing. Observing that $\tfrac{d^2f}{dr^2}(0) = 4k^2 - 16 > 0$ while $\tfrac{d^2f}{dr^2}(k) = -8k^2-16 < 0$, we know there is precisely one zero in the range $0 < r < k$. Hence, $\tfrac{df}{dr}$ has one turning point, and it is a maximum by the negativity of $\tfrac{d^3f}{dr^3}$. Checking at both extremes of the range again finds that $\tfrac{df}{dr}(r) > 0$, and so $f$ is increasing. However,
$$f(k) = -k^4+8k^3-8k^2+16,$$
which is negative for $k \geq 7$, and so $f_{B,2}(r) = f(r) < 0$ on the range prescribed. If $k = 5$, then observe that $r = 2$, and direct computation finds $f_{B,2}(2) < 0$.
\item[C2] We play around with the $Z(r)-Z(0)=0$ equation, which gives
$$2k = r + \tfrac{8a}{A-6}.$$
Ponder this a moment. Since $\tfrac{r}{2}$ is odd, we know that $\tfrac{r^2}{4} = \tfrac{A}{4}k + 1 \equiv 1 \mod 4$, and so $A \equiv 0 \mod 16$. If $A \geq 16$, then it must follow that $2k \leq r + \tfrac{4}{5}a < 2k$, which is nonsense. If $A = 0$, then we find instead $2k = r -\tfrac{4}{3}a < 2k$, also a contradiction.
\item[D3, F5] Write
$$f_{D,3}(r) = (r^4-8r^2)+ (8k^2r-16k^2+16).$$
The two bracketed expressions are both positive once $r \geq 4$, but since $r \equiv 0 \mod 4$, it follows that $r = 4$ is the smallest value for $r$ allowed. Hence we have our contradiction.
\item[E3, F3]From condition 3 we know that $B = \tfrac{r}{2}-\tfrac{a}{2} - \tfrac{A}{2} \leq 1-\tfrac{A}{2} < 0$ unless $A= 0$. This contradicts conditions E and F. However, if $A = 0$, then $r = 2$  and we violate the condition that $r \equiv 0 \mod 4$.
\item[D4]Write
$$f_{D,4}(r) = \underbrace{(r^4-4kr^3-k^2r^2+4k^3r-8k^3)}_{g(r)} + (8k^2r-2kr^2) + (48k^2-8r^2)+16kr +8k +16$$
and observe that except possibly $g(r)$, all the terms are positive. We aim to show that on the range $2 < r < k$ we have $g(r)>0$. Indeed, consider its derivatives:
\begin{align*}
\frac{dg}{dr}(r) &= 4r^3-12kr^2-2k^2r+4k^3\\
\frac{d^2g}{dr^2}(r) &= 12r^2-24kr-2k^2.
\end{align*}
Now, the second derivative is clearly negative on $0 < r < k$, and thus on our range of interest $\tfrac{dg}{dr}$ is decreasing. Observing that $\tfrac{dg}{dr}(0)=4k^3>0$ and $\tfrac{dg}{dr}(k)=-6k^3<0$ we know there is precisely one zero to $\tfrac{dg}{dr}$ on $0 < r < k$. That is, $g$ has precisely one turning point, and since $\tfrac{d^2g}{dr^2} < 0$ it is a local maximum. We compute:
\begin{align*}
g(4) = 8k^3-16k^2-256k+256 & & g(k-2)=4k^3-28k^2+16.
\end{align*}
When $k \geq 7$, these are both positive, so the function is positive over the range $4 \leq r \leq k-2$. Notice that the requirements that $r\equiv 0 \mod 4$ and $r^2 \equiv 4 \mod k$ both imply that we need not consider $r = 2,k-1$, and so this suffices for our contradiction. If $k = 5$, Proposition \ref{primep} tells us $r=2$, $A = 0$, $a= 4$, and cannot be in this case since condition 4 is violated.
\item[E4]Rearrange the $Z(r)-Z(0)=0$ equation to obtain
$$4k = r - \tfrac{4}{A-2}(r-2a).$$
At this point we know (from condition 4) that $a\leq r-2$, whence $4k \leq r + \tfrac{4k}{A-2} < 3k$ if $A \neq 0$, since $A \equiv 0 \mod 4$. If $A = 0$, then $a = \tfrac{k+3}{2} > 0=r-2$, a contradiction.
\item[F4] Write
\begin{align*}
f_{F,4}(r) &= (r^4  +k^2r^2 -2k^3r  ) - 4kr^3 -( 2k + 8)r^2 \\
&\phantom{SPACE}- (2k^3 - 24k^2 - 16k)r - (72k^3 - 48k^2 - 8k - 16).
\end{align*}
Once $k\geq 13$, all the bracketed terms are negative. For $k<13$, we contradict conditions F and 4 by way of Proposition \ref{primep} since $k$ must be prime power.
\item[D5] Write
$$f_{D,5}(r) = (r^4 - 2k^2r^2 - 8r^2) - 8k^2r - (16k^2 - 16)$$
and note that all bracketed terms are negative.
\item[E5] Write
$$f_{E,5}(r) = g(r) + (2k-8)r^2 + (8k^3 -8k^2r+16kr) + (8k^3 -16k^2 - 8k + 16),$$
where $g(r)$ is as in case D4, and all bracketed terms are positive if $k \geq 7$. If $k = 5$, we are not in this case by Proposition \ref{primep}.
\end{description}

With all possibilities checked, we are finished the proof.
\end{proof}

\subsubsection{The Case $a$ Odd}

We now repeat for $a$ an odd integer. This is extremely similar to the previous situation, so we omit proofs which are virtually identical. As with Step One before, the proofs of the following are straightforward verifications.

\begin{prop}[$r \equiv 2 \mod 4$]
If $r \equiv 2 \mod 4$ and $a$ is odd, then we define parameters $B:= 1 + \tfrac{r}{2}-\tfrac{a-k}{2}-\tfrac{A}{2} \in [0,k-1)$ and $J:= -\tfrac{r}{2}{-4} < 0$, giving
$$\phi(2s\ell-r\ell) =\begin{cases} -K^1_{B,-J-2B} &\text{if }J+2B < k\\ -K^1_{B+2,-J-2B+2k-4} &\text{if }J+2B\geq k\end{cases},$$
and also
$$\phi(2s\ell) = \begin{cases}K^1_{\tfrac{a-r+k}{2},\tfrac{r}{2}-a-k} &\text{if }r > 2a\\
K^1_{\tfrac{a-r+k}{2}+2,\tfrac{r}{2}-a+k-4} &\text{if }r \leq 2a
\end{cases}.$$
\end{prop}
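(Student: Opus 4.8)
The plan is to treat this exactly as the analogous proposition in the $a$-even case: everything reduces to a finite verification in $\coker(Q)$, organised by the exchange relations of Lemma \ref{exchange}. Recall that $\phi$ is the isomorphism $\mathbb{Z}/(D)\to\coker(Q)$ fixed by $\phi(1)=K^1_{1,-1}$, so $\phi(m)=[\,m\,K^1_{1,-1}\,]$ for every integer $m$; concretely, to name $\phi(m)$ as one of the maximisers in $\Charm(G)$ one must exhibit a covector $K$ from the list in the proposition with $(mK^1_{1,-1}-K)Q^{-1}\in\mathbb{Z}^{k+2}$, and then appeal to the fact that a maximising covector is the unique representative of its $\Spinc$-structure. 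For the two values at hand we take $m=2s\ell$ and $m=2s\ell-r\ell$, using $2s=\tfrac{k^2-1}{2}$ and $\ell=ak+r$ to reduce $m$ modulo $k^2$.

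The mechanical heart of the argument is to run the normalisation procedure indicated after Lemma \ref{exchange}: start from the ansatz that $\phi(m)$ has the shape $K^3_j=K^1_{0,j}$ for the appropriate residue $j$, and repeatedly apply relations (A), (B), (C) — together with the conjugation symmetry $d(Y,\mathfrak{t}(K))=d(Y,\mathfrak{t}(-K))$, which is what the sign conventions $\phi(m)=-K$ encode — to push the subscripts $(i,j)$ into the maximiser window ($0\le i\le k+1$, $j$ odd, $2-k\le j\le k-4$). The parameters $B$ and $J$ are precisely the bookkeeping quantities recording how many applications of (B) (which shifts $j$ by $k-2$ and $i$ by $1$) and how many reflections are needed; the stated interval $B\in[0,k-1)$ and the sign $J=-\tfrac{r}{2}-4<0$ guarantee the process terminates in one of the listed normal forms. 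The congruence \eqref{magic}, $A+2ar\equiv 6\bmod k$ with $r^2=Ak+4$, is the arithmetic input that makes $(mK^1_{1,-1}-K)Q^{-1}$ come out integral — without it the final coordinate (the one weighted by $-k$, which has the subtle $Q^{-1}$ entries) would not clear. The case split $J+2B<k$ versus $J+2B\ge k$ for $\phi(2s\ell-r\ell)$, and $r>2a$ versus $r\le 2a$ for $\phi(2s\ell)$, merely distinguishes whether one extra application of (B) is required to keep $j$ in range, while the hypothesis $r\equiv 2\bmod 4$ (so $\tfrac{r}{2}$ is odd) fixes the parity of $j$, which must be odd for a genuine maximiser; this is exactly the point where the $a$-odd, $r\equiv 2$ sub-case differs from the others.

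The main obstacle is organisational rather than conceptual: one must check that the displayed case conditions genuinely partition all admissible triples $(a,r,A)$ with $a$ odd and $r\equiv 2\bmod 4$, and that in each case the resulting $K^1_{\cdot,\cdot}$ really lies in $\Charm(G)$ (correct range, correct parity), so that uniqueness of maximising representatives applies. Since each verification $(mK^1_{1,-1}-K)Q^{-1}\in\mathbb{Z}^{k+2}$ is a short explicit computation using the formula for $Q^{-1}$ already recorded, and the range and parity checks are immediate from the definitions of $B$ and $J$, no individual step is hard; the work lies in assembling the cases without gaps. Accordingly I would present this, as the authors did for $a$ even, as a straightforward but tedious verification, writing out one representative computation and stating that the remaining cases follow by the same method.
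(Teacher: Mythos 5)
Your proposal is correct and follows essentially the same route as the paper: the authors likewise prove these identities by directly checking $(mK^1_{1,-1}-K)Q^{-1}\in\mathbb{Z}^{k+2}$ for the appropriate $m$ and candidate maximiser $K$ (using the congruence \eqref{magic} and the explicit $Q^{-1}$), having found the normal forms by starting from $K^3_j$ and applying Lemma \ref{exchange}, with the case splits and the parity of $\tfrac{r}{2}$ governing which subscripts land in the admissible range. The paper simply records this as a verification "virtually identical" to the $a$-even case, which is what you describe.
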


\begin{prop}[$r \equiv 0 \mod 4$]
If, on the other hand, $r \equiv 0 \mod 4$, then we instead define $B := \tfrac{r}{2}-\tfrac{a-k}{2} - \tfrac{A}{2} \in [0,k-1)$ and $J:=k-\tfrac{r}{2}-4>0$, giving
$$\phi(2s\ell-r\ell) = \begin{cases}
-K^1_{B,-J-2B} &\text{if }J+2B < k\\
-K^1_{B+2,-J-2B+2k-4} &\text{if }J+2B \geq k
\end{cases},$$
and lastly
$$\phi(2s\ell) = K^1_{\tfrac{a-r+k}{2}+1,\tfrac{r}{2}-a-2}.$$
\end{prop}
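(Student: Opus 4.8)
\emph{Proof proposal.} The plan is to prove both displayed identities by the same direct‑verification method already used for the even‑$a$ propositions. Recall that $\phi\colon\mathbb{Z}/(D)\to\coker(Q)$ is the group homomorphism with $\phi(1)=K^1_{1,-1}$, so that $\phi(N)=[N\,K^1_{1,-1}]$ in $\coker(Q)$ for every integer $N$. Hence it suffices to produce, for each of $N=2s\ell-r\ell$ and $N=2s\ell$, a characteristic covector $K$ of the asserted type and sign with $(N\,K^1_{1,-1}-K)\,Q^{-1}\in\mathbb{Z}^{k+2}$. Because $N\,K^1_{1,-1}-K$ is supported on at most three coordinates, the explicit formula for $Q^{-1}$ collapses this into a short list of congruences — essentially one for the central coordinate and one for the last coordinate — and these are cleared using $r^2=Ak+4$ together with the congruence \eqref{magic}, $A+2ar\equiv6\pmod k$ (and, for the final coordinate, a refinement mod $k^2$ coming from \eqref{cong}).

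First I would pin down the candidate covectors. For $\phi(2s\ell-r\ell)$ one starts from a representative $K^3_j=K^1_{0,j}$ and applies the exchange moves of Lemma~\ref{exchange}: move (A) trades $K^3_{J+kB}$ for $K^1_{-B,J+2B}$, move (B) implements $(I,J)\mapsto(I+1,J+k-2)$, and move (C) shifts the second subscript by $k^2$. Since $a$ is odd and $r\equiv0\pmod4$, the parity bookkeeping — the second subscript of a legitimate $K^1$ must be odd — forces $J=k-\tfrac r2-4$, which is indeed odd, and forces the shift parameter $B=\tfrac r2-\tfrac{a-k}{2}-\tfrac A2$; that $B\in[0,k-1)$ follows from $0\le a<k$, $0<r<k$ and the bound $0\le r-A<\tfrac k4+1$ of the preceding proposition. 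The two sub‑cases $J+2B<k$ and $J+2B\ge k$ are precisely the regimes in which the naive second subscript $-J-2B$ already lies in $[2-k,\,k-4]$ or instead needs a single application of move (B); the latter produces the $B+2$ and the $+2k-4$ shift in the second line. For $\phi(2s\ell)$ I would instead start from the value $\phi(2s)$ recorded in the proof of Proposition~\ref{congruence} — namely $K^1_{k+1,-\frac{k+1}{2}}$ or $K^3_{\frac{k-1}{2}}$ according to $k\bmod 4$ — translate by $\ell$, and normalise again via Lemma~\ref{exchange}; here the subscript ranges come out so that only one case survives, giving $K^1_{\frac{a-r+k}{2}+1,\,\frac r2-a-2}$ with no splitting (and one checks that $0\le\frac{a-r+k}{2}+1\le k+1$ and that $\frac r2-a-2$ is odd and in range).

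The remainder is bookkeeping: substitute the chosen $K$ and the relevant $N$ into $(N\,K^1_{1,-1}-K)Q^{-1}$ and confirm each coordinate is integral, reducing mod $k$ with \eqref{magic} and $r^2=Ak+4$. I expect the main obstacle to be exactly this care rather than any conceptual difficulty: one must ensure that every $K^1_{i,j}$ written down genuinely satisfies $0\le i\le k+1$, $2-k\le j\le k-4$, $j$ odd, and in particular that the overflow sub‑case $J+2B\ge k$ requires precisely one move (B) and never two — which is exactly where the estimate $r-A<\tfrac k4+1$ is used. Since $\ell$, $a$, $r$, $A$ are tightly constrained (and, when $k$ is a prime power, pinned down outright by Proposition~\ref{primep}, in which case $r=2$ and the hypothesis $r\equiv0\pmod4$ is vacuous), no new phenomenon arises beyond those already treated in the even‑$a$ case, and the verification closes case by case as before.
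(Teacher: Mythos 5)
Your proposal is correct and is essentially the paper's own (largely omitted) argument: the authors also find the representatives by starting from a covector of type $K^3$ and normalising with Lemma \ref{exchange}, and then justify the proposition solely by the direct check that $(mK^1_{1,-1}-K)Q^{-1}\in\mathbb{Z}^{k+2}$ using the congruence \eqref{magic} together with $r^2=Ak+4$ (which is how \eqref{cong} enters), exactly as you outline. Your parity and range bookkeeping for the subscripts, including why only one case survives for $\phi(2s\ell)$, matches what the paper intends, so no substantive difference remains.
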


With these computed, we then establish the tables exactly as before. First, $i = r$:

\begin{center}
\begin{tabular}{c|c|c||c}
Case &$r \mod 4$ &Conditions &$16k^2Z(r)$\\
\hline
\hline
$A$ &$2$ & $J+2B < k$ &$(4kr-8k^2)A+(2-3k)r^2+((4a-24)k+4k^2)r$\\
& & & $\phantom{SPACE}+4k^3+(-8a+16)k^2+32ak-8$\\
\hline
$B$ &$2$ & $J+2B \geq k$ &$(4kr+8k^2)A+(2-3k)r^2+((4a-24)k-4k^2)r$\\
& & & $\phantom{SPACE}+4k^3+(8a+48)k^2 +32ak-8$\\
\hline
\hline
$C$ &$0$ & $J+2B<k$ &$(4kr-16k^2)A+(2-3k)r^2+((4a-24)k+8k^2)r$\\
& & & $\phantom{SPACE}+16k^3+(-16a-24)k^2+32ak-8$\\
\hline
$D$ &$0$ & $J+2B \geq k$ &$4Akr+(2-3k)r^2+(4a-24)kr +40k^2+32ak-8$
\end{tabular}
\end{center}

And now for $i = 0$:

\begin{center}
\begin{tabular}{c|c|c||c}
Case &$r \mod 4$ &Conditions &$16k^2Z(0)$\\
\hline
\hline
$1$ &$2$ &$r>2a$ &$(2-3k)r^2 + (4ak - 4k^2)r +4k^3 +8ak^2 -8$\\
$2$ &$2$ &$r \leq2a$ &$(2-3k)r^2 + (4ak + 4k^2)r +4k^3-8ak^2 -8$\\
\hline
$3$ &$0$ &$-$ &$(2-3k)r^2+4akr + 8k^2-8$
\end{tabular}
\end{center}

As before, we have the following proposition.

\begin{prop}
If $a$ is odd, then no $\ell$ exists which ensures that $Z(r) = Z(0) = 0$.
\end{prop}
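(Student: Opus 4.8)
The plan is to imitate, verbatim, the strategy of the $a$ even case. Since both $Z(r) = 0$ and $Z(0) = 0$ are required, whenever $\ell$ realises case $\alpha$ of the $Z(r)$ table and case $\beta$ of the $Z(0)$ table we must have $Z(r) + Z(0) = 0$ and $Z(r) - Z(0) = 0$ simultaneously. The parity of $r$ forces $\alpha \in \{A,B\}$ to pair only with $\beta \in \{1,2\}$ and $\alpha \in \{C,D\}$ to pair only with $\beta = 3$, leaving six combinations to eliminate.

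For each pair I would proceed as in the even case: (i) cancel the common factors (typically $4k$ or $8k$) from both equations; (ii) substitute $A = \tfrac{r^2-4}{k}$, which is forced by $r^2 = Ak + 4$; (iii) note that $a$ occurs linearly with nonzero coefficient in $Z(r) + Z(0) = 0$, solve for $a$, and substitute into $Z(r) - Z(0) = 0$ to obtain a single quartic $f_{\alpha,\beta}(r) = 0$; (iv) show $f_{\alpha,\beta}$ is strictly positive or strictly negative over the admissible range, which is $4 \le r < k$ when $r \equiv 0 \pmod 4$ and $2 \le r < k$ when $r \equiv 2 \pmod 4$ (the values $r=2$ and $r=k-1$ being excluded in the $r\equiv 0$ cases by the constraints $r$ even and $r^2 \equiv 4 \pmod k$). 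When the grouped coefficients of $f_{\alpha,\beta}$ are not all of one sign, the same derivative bookkeeping used in cases like \textbf{B2} and \textbf{D4} of the even proof (compute $f'$, $f''$, $f'''$, locate the unique turning point, evaluate at the endpoints) supplies the sign. For the finitely many small-$k$ exceptions one invokes Proposition~\ref{primep}: when $k$ is a prime power one is forced into $r = 2$, $A = 0$, $a = \tfrac{k+3}{2}$, and then the defining inequality of case $\alpha$ (on $B$, or on $a$ versus $r$) fails outright.

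Several combinations should in fact collapse without the quartic. In the $r \equiv 0$ cases $C$ and $D$ the bound $B = \tfrac{r}{2} - \tfrac{a-k}{2} - \tfrac{A}{2} \in [0,k-1)$ together with case $3$ often forces $A = 0$, hence $r = 2$, contradicting $r \equiv 0 \pmod 4$; similarly, solving $Z(r) - Z(0) = 0$ for $k$ and using that $\tfrac{r}{2}$ odd gives $A \equiv 0 \pmod{16}$ (exactly as in case \textbf{C2} of the even proof) traps the answer strictly between $k$ and itself. These shortcuts handle the cases whose quartics have genuinely mixed-sign coefficients.

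The main obstacle, as before, is step (iv): obtaining a clean, $k$-uniform sign for each of the six $f_{\alpha,\beta}$ and, just as importantly, pinning down exactly which residues $r$ are legitimate — the simultaneous constraints ``$r$ even'', ``$r^2 \equiv 4 \pmod k$'', and ``$r$ in a fixed class mod $4$'' are what let us ignore the endpoints, and an off-by-one there would leave a real hole. Once each $f_{\alpha,\beta}$ is shown to be nonvanishing on its range (directly, by derivatives, or by reduction to the prime-power case), no $\ell$ can realise a compatible pair of cases, so no $\ell$ makes $Z(r) = Z(0) = 0$, and the proof is complete.
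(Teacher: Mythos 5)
Your outline reproduces the paper's strategy exactly (form $Z(r)\pm Z(0)=0$, substitute $A=\tfrac{r^2-4}{k}$, eliminate $a$, study the sign of a quartic $f_{\alpha,\beta}(r)$ on $0<r<k$, and fall back on Proposition~\ref{primep} for small $k$), and your pairing of cases $A,B$ with $1,2$ and $C,D$ with $3$ --- six combinations in all --- is the correct bookkeeping. But as written it is a recipe rather than a proof: the entire content of the proposition lies in actually producing the six quartics from the odd-$a$ tables and establishing their signs, and you explicitly defer this (``the main obstacle, as before, is step (iv)''). The odd-$a$ data cannot be recycled from the even case, because $\phi(2s\ell)$ and $\phi(2s\ell-r\ell)$ are identified by different covectors (the parameter $B$ now involves $\tfrac{a-k}{2}$), so each $f_{\alpha,\beta}$ must be recomputed and checked afresh. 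In the paper this works out as: $A1$ and $B1$ by elementary sign arguments on the quartic (decreasing with $f(k-1)>0$, resp.\ all coefficients positive); $A2$ by the $A\equiv 0\bmod 16$ trick you correctly anticipate; $B2$ by a direct contradiction between $J+2B\geq k$ (which gives $\tfrac{r}{2}-a-A-2\geq 0$) and $r\leq 2a$, with no quartic needed; $C3$ by reusing the auxiliary function $g$ from the even case $D4$ (positive for $k\geq 7$, prime-power argument for $k=5$); and $D3$ by the same grouping as the even cases $D3$, $F5$.

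One of your proposed shortcuts also does not survive the transfer to odd $a$: you suggest that in cases $C$ and $D$ the bound on $B$ ``together with case $3$'' forces $A=0$, mimicking the even-case $E3$/$F3$ argument. But in the odd-$a$ table, case $3$ carries no auxiliary condition on $a$ versus $r$, and the shifted definition $B=\tfrac{r}{2}-\tfrac{a-k}{2}-\tfrac{A}{2}$ means the inequality that drove $E3$/$F3$ is simply not available; $C3$ and $D3$ genuinely require the quartic sign analysis. So while nothing in your plan is wrongheaded, the case analysis that constitutes the proof is missing, and at least one of the intended substitutes for it fails.
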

\begin{proof}
Exactly as before, we have another table (though it is much smaller this time):

\begin{center}
\begin{tabular}{c|c}
$\alpha,\beta$ &$f_{\alpha,\beta}(r)$\\
\hline
\hline
$A,1$ &$r^4-(5k^2-2k+8)r^2+4k^4+16k^2-8k+16$\\
$B,1$ &$r^4 +4kr^3 + (4k^2-8)r^2 + (8k^2-16k)r + 16k^3-16k^2 + 16$\\
\hline
$A,2$ &$r^4-4kr^3+(4k^2-8)r^2+(8k^2+16k)r-16k^3-16k^2+16$\\
$B,2$ &$r^4 - (3k^2 + 2k + 8)r^2 + 16k^2r - 4k^4 - 32k^3 + 16k^2 + 8k + 16$\\
\hline
\hline
$C,3$ &$r^4-4kr^3-(k^2-2k+8)r^2 +(4k^3-8k^2+16k)r +8k^3-16k^2-8k+16$\\
\hline
$D,3$ &$r^4 -8r^2 + 8k^2r -16k^2+16$\\
\end{tabular}
\end{center}

The case-by-case analysis goes as follows.
\begin{description}
\item[A1] We observe that $f_{A,1}$ has the same structure as cases A2 and B1 from the previous section, and that $f_{A,1}(k-1) = 8k^3+5k^2+6k+9 > 0$, whence we are done.
\item[B1] Each of the coefficients of $r$ in $f_{B,1}(r)$ is clearly positive.
\item[A2] The $Z(r)-Z(0) =0$ equation gives us
$$k=\tfrac{4a-2r}{A-2} + \tfrac{r}{2},$$
and since $A\equiv 0 \mod 16$ (see case C2 in the previous section), we discover, barring $A = 0$, that $k<\tfrac{1}{7}(2a-r) + \tfrac{r}{2} < \tfrac{11}{14}k$, which is a contradiction. If $A=0$, we see $k = r-2a + \tfrac{r}{2} < k$ (since $r \leq 2a$ by condition 2), also a contradiction.
\item[B2] From condition B, we see $J+2B \geq k$, so $\tfrac{r}{2} -a -A-2 \geq 0$. However, from condition 2, we know that $r \leq 2a$, so we have a contradiction.
\item[C3] We write
$$f_{C,3}(r)=g(r) +(2k-8)r^3 +(8k^3-8k^2r+16kr) +(8k^3-16k^2-8k+16),$$
where $g(r)$ is the same function as in case D4 above. We know that all terms are positive for $k\geq 7$, and if $k = 5$ we obtain the usual contradiction (namely, we are not in this case).
\item[D3] As cases D3 and F5 from the previous section.
\end{description}
All cases are done, and so is the proof.
\end{proof}

\subsubsection{The Proof of Theorem \ref{0}}

\begin{proof}[Proof of Theorem \ref{0}]
For any $m$, we know $k = 1$ yields the unknot. Otherwise, we know by Lemma \ref{Alexander} that $m = 1$. Moreover, we now know from the previous two subsections that if $k\geq 5$ then $P(k,-k,2)$ cannot have unknotting number one. Since $P(3,-3,2)$ does indeed have unknotting number one, the theorem is proved.
\end{proof}

\subsection{Examples}

To illustrate the above working, we focus on the case that $k$ is prime power. Recall from Proposition \ref{primep}, there is an essentially unique $\ell$. Then $a$ is even or odd according to the congruence of $k$ modulo 4 (cases A1 and A2 respectively). We get:
\begin{align*}
\phi(2\ell) = K^3_{k-4} &&\phi((2s-2)\ell) = \begin{cases}-K^3_k &k=5\\K^1_{\tfrac{1}{4}(k-5),-\half(k+5)} &k>5 \text{ and } k\equiv 1 \mod 4\\-K^1_{\tfrac{1}{4}(k+5),-\half(k-5)} &k\equiv 3 \mod 4 \end{cases}.
\end{align*}
We then find (surprisingly independently of the conditions on $k$ modulo 4):
\begin{align*}
d(\Sigma,\phi(2\ell))= -\tfrac{1}{k^2}(-2k^2+8) &&d(\Sigma,\phi((2s-2)\ell)) = -\tfrac{1}{2k^2}(-k^2+25).
\end{align*}
Grinding all this into \eqref{obs2}, we should find $Z(2) = 0$, but in fact:
$$Z(2) = \tfrac{1}{2k^2}(3k^2 +9) + \tfrac{1}{2k^2}(k^2-9) = 2,$$
which is blatantly untrue.

We can see this even more concretely in a particular example, namely $k = 5$. The correction terms for the lens space in this case are:
$$d(L,i) =(0,-\tfrac{2}{25},-\tfrac{8}{25},-\tfrac{18}{25},-\tfrac{32}{25},-2,-\tfrac{72}{25},-\tfrac{48}{25},-\tfrac{28}{25},-\tfrac{12}{25},0,\tfrac{8}{25},\tfrac{12}{25}, \dots).$$
Here, we have only presented the first half since $d(\cdot,i) = d(\cdot,-i)$. Then for the double cover, we have, using our isomorphism $\phi$,
$$d(\Sigma,i^\prime) =(0,\tfrac{22}{25},-\tfrac{12}{25},-\tfrac{2}{25},\tfrac{2}{25},0,\tfrac{42}{25},\tfrac{28}{25},\tfrac{8}{25},-\tfrac{18}{25},0,\tfrac{12}{25},\tfrac{18}{25}, \dots).$$
Now, solving \eqref{cong} tells us $\ell = \pm 3$, so take $\ell = 22$ and note that indeed $r = 2$, $A = 0$, and $a = 4$. We find
$$d(\Sigma,22i^\prime) =(0,-\tfrac{2}{25},\tfrac{42}{25},-\tfrac{18}{25},\tfrac{18}{25},0,\tfrac{28}{25},\tfrac{2}{25},\tfrac{22}{25},-\tfrac{12}{25},0,\tfrac{8}{25},\tfrac{12}{25}, \dots),$$
and tabulate the corresponding sides of \eqref{obs2}, multiplying them by $25$:
\begin{center}
\begin{tabular}{c||cc}
$i$ &$\Sigma(k,-k,2)$ &$-L(k^2,2)$\\
\hline
$0$ &$-12$ &$-12$\\
$1$ &$-10$ &$-10$\\
$2$ &$42$ &$-8$\\
$3$ &$-6$ &$-6$\\
$4$ &$-4$ &$-4$\\
$5$ &$-2$ &$-2$\\
$6$ &$0$ &$0$
\end{tabular}
\end{center}
We can see here that the two sides are congruent modulo 25, but not equal, so the knot $P(5,-5,2)$ cannot have unknotting number one. We can also explicitly see the failure of $Z(2) = 0$, and that the correct value is indeed $Z(2) = 2$.

For those who wish to compare this with Theorem 1.1 of \cite{OSUnknot}, we remark that our choice of $\ell$ also gives us a positive, even matching. This matching, however, is \emph{not} symmetric.

\section{Further Remarks}
\label{s:2,4}

There are two remaining cases: $P(k,-k+2,2m)$ and $P(k,-k+4,2m)$. In these cases we run into difficulties applying the above methods. First of all, we cannot employ Theorem \ref{greene}, since $\Sigma(K)$ is not an $L$-space. Though some work has or is being done to remove the $L$-space restriction, any application of the theorem to $P(k,-k+2,2m)$ when the signature vanishes can at best isolate the sign of the crossing change, as happened with $P(k,-k,2)$, and even naive use of the obstruction without consideration for these orientation hypotheses fails to resolve these cases fully. We end up with two infinite families in each ($\det K = 1,5$ and $\det K = 3, 11$ respectively) for which the theorem provides no obstruction.

Since the proof of Theoerm \ref{greene} does not require the full power of Theorem \ref{OSunknot} (including the case when $D \equiv 3 \mod 4$ not stated here), one might consider using Theorem \ref{OSunknot} directly. However, even here we have no hope of a complete proof since \eqref{obs} is vacuously satisfied when $\det K = 1, 3$: we do not have enough $\Spinc$-structures for any asymmetries to occur. Therefore, a totally new method will have to be brought to bear if we are to crack these cases.

In both cases, the authors have tried using the Alexander module to no effect, and their computations in small cases suggest that neither the Rasmussen nor Ozsv\'ath-Szab\'o $\tau$-invariant are of any use either. They therefore leave treatment of these two cases to another paper at another time.

\bibliographystyle{plain}
\bibliography{pretbib}
\end{document}